\definecolor{red}{HTML}{F44336}
\definecolor{green}{HTML}{4CAF50}
\definecolor{yellow}{HTML}{FFEE58}
\definecolor{blue}{HTML}{0D47A1}
\numberwithin{equation}{section}
\theoremstyle{plain}
\newtheorem{theorem}{Theorem}[section]
\newtheorem{lemma}[theorem]{Lemma}
\newtheorem{proposition}[theorem]{Proposition}
\newtheorem{corollary}[theorem]{Corollary}
\theoremstyle{definition}
\newtheorem{definition}[theorem]{Definition}
\theoremstyle{remark}
\newtheorem{remark}[theorem]{Remark}
\newlist{thenum}{enumerate}{1}
\setlist[thenum, 1]{label=(\alph*), ref=\thetheorem(\alph*)}
\let\P\relax
\DeclareMathOperator{\P}{\mathbb{P}}
\DeclareMathOperator{\E}{\mathbb{E}}
\DeclareMathOperator{\Var}{\mathbb{V}ar}
\newcommand*{\0}{\bm{0}}
\newcommand*{\inner}[2]{\langle #1, #2\rangle}
\providecommand{\abs}[1]{\lvert#1\rvert}
\providecommand{\norm}[1]{\lVert#1\rVert}
\newcommand*{\ind}[1]{\operatorname{\mathbbm{1}}\{#1\}}
\DeclareMathOperator{\bb1}{\mathbbm{1}}
\newcommand*{\tv}[2]{\operatorname{TV}(#1, #2)}
\newcommand*{\kl}[2]{\operatorname{KL}(#1\parallel#2)}
\newcommand*{\cs}[2]{\operatorname{\chi^2}(#1\parallel#2)}
\newcommand*{\Df}[2]{\operatorname{D}_f(#1\parallel#2)}
\DeclareMathOperator{\Ent}{Ent}
\newcommand*{\RR}{\mathbb{R}}
\renewcommand*{\SS}{\mathbb{S}}
\newcommand*{\cD}{\mathcal{D}}
\newcommand*{\cF}{\mathcal{F}}
\newcommand*{\cG}{\mathcal{G}}
\newcommand*{\cN}{\mathcal{N}}
\newcommand*{\cP}{\mathcal{P}}
\newcommand*{\cQ}{\mathcal{Q}}
\newcommand*{\cX}{\mathcal{X}}
\renewcommand*{\a}{\bm{a}}
\renewcommand*{\b}{\bm{b}}
\newcommand*{\e}{\bm{e}}
\newcommand*{\x}{\bm{x}}
\newcommand*{\y}{\bm{y}}
\newcommand*{\z}{\bm{z}}
\newcommand*{\A}{\bm{A}}
\newcommand*{\B}{\bm{B}}
\newcommand*{\I}{\bm{I}}
\newcommand*{\X}{\bm{X}}
\newcommand*{\bSigma}{\bm{\varSigma}}
\newcommand*{\brsigma}{\bar{\sigma}}
\newcommand*{\ER}{Erd\H{o}s--R\'enyi}
\begin{document}
\title{A probabilistic view of latent space graphs
and phase transitions}
\author{Suqi Liu\thanks{Princeton University, Princeton, NJ 08544, USA;
Email: \texttt{\{suqil, mracz\}@princeton.edu}.
Research supported in part by NSF grant DMS-1811724.
}
\and
Mikl\'os Z.\ R\'acz\footnotemark[1]}
\date{October 29, 2021}

\maketitle
\begin{abstract}
We study random graphs with latent geometric structure,
where the probability of each edge depends on the underlying random positions
corresponding to the two endpoints.
We focus on the setting where this conditional probability is a general
monotone increasing function of the inner product of two vectors;
such a function can naturally be viewed as
the cumulative distribution function of some independent random variable.
We consider a one-parameter family of random graphs,
characterized by the variance of this random variable,
that smoothly interpolates between a random dot product graph
and an Erd\H{o}s--R\'enyi random graph.
We prove phase transitions of detecting geometry in these graphs,
in terms of the dimension of the underlying geometric space
and the variance parameter of the conditional probability.
When the dimension is high or the variance is large,
the graph is similar to an Erd\H{o}s--R\'enyi graph with the same
edge density that does not possess geometry;
in other parameter regimes, there is a computationally
efficient signed triangle statistic that distinguishes them.
The proofs make use of information-theoretic inequalities
and concentration of measure phenomena.
\end{abstract}

\section{Introduction} \label{se:intro}
Random graphs defined over some latent geometric space are widely used to model
a large variety of real-life networks.
Random dot product graphs are a natural family
of random graphs with simple latent geometry
where the probability of connection depends on the inner product of two
vectors in a Euclidean space~\cite{young2007random,athreya2018statistical}.
For a graph on a set of vertices $V = [n] \coloneqq \{1,2,\ldots,n\}$,
we write $i \sim j$ if vertices $i$ and $j$ are connected by an undirected edge.
Let $\x_1, \ldots, \x_n \in \RR^d$ be independent identically distributed
random vectors.
In the general setting of random dot product graphs,
conditioned on the latent vectors $\x_i$ and $\x_j$,
the event $i \sim j$  happens with probability $\sigma(\inner{\x_i}{\x_j})$,
independently of everything else,
where $\inner{\cdot}{\cdot}$ denotes the inner product and $\sigma: \RR \to [0, 1]$ is usually a monotone increasing function called the
\emph{connection function}.
That is,
\begin{equation*}
\P(i \sim j \mid \x_1, \ldots, \x_n) = \sigma(\inner{\x_i}{\x_j}).
\end{equation*}
The distribution of the random graph is then specified by
\begin{equation*}
\P(G) = \E\biggl[\prod_{i<j}\sigma(\inner{\x_i}{\x_j})^{a_{i,j}}
(1-\sigma(\inner{\x_i}{\x_j}))^{1-a_{i,j}}\biggr],
\end{equation*}
where $\A = [a_{i,j}]$ is the adjacency matrix of the graph $G$.

The properties of the connection function $\sigma$
play a key role in the presence of geometry in the random graph.
When $\sigma$ is a threshold function,
that is, $\sigma(x) = \ind{x \ge t}$,
then such a graph is known simply as a \emph{random dot product graph}
\cite{liu2021phase}.
At the other extreme, if $\sigma$ is a constant,
we have an \ER{} random graph, with no geometry.
In between,
intuitively, when the connection function is ``steep'',
the edges are related through the latent geometric space;
when the connection function is ``flat'',
the edges become less dependent of each other.
The flatness of the connection function can also be understood as the level
of noise in a geometric graph:
A connection function that is close to a constant implies large noise.
The trade-off between noise and dimensionality in detecting geometry in
random graphs was first studied by the authors \cite{liu2021phase},
where a particular one-parameter family of connection functions,
consisting of step functions, were studied.
An immediate question is how does this trade-off generalize to other connection
functions (in particular, smooth ones) that are widely used in practice?
We attempt to answer this question by studying a natural one-parameter family of
smooth connection functions that interpolates between the two extremes.

\subsection{A probabilistic view of the connection function}
We focus on the case when the latent positions are independent
standard normal random vectors, that is,
$\x_i \sim \cN(\0, \I_d)$ for $1 \le i \le n$.
We consider a broad class of connection functions bearing a probabilistic view.
Observing that the connection function is usually monotone increasing between
$0$ and $1$,
it is natural to view it as the cumulative distribution function (CDF) of some
random variable.
Keeping this setup in mind, we seek a parametrization that characterizes
the ``flatness'' of the function and interpolates between the geometric graph
and the \ER{} model.
A natural parameter is the variance of this random variable.
If the variance is large, the CDF changes slowly.
On the other hand, if the variance goes to zero, the random variable
converges to a constant and the CDF becomes a threshold function.
We formulate the idea as follows.

Suppose $\cD(0,1)$ is an arbitrary zero mean and unit variance distribution
with the CDF denoted by $F: \RR \to [0,1]$.
Suppose the probability measure is absolutely continuous with respect to
Lebesgue measure and let $f$ be the probability density function (PDF).
We assume that
\begin{equation} \label{eq:A0}
f(x) > 0, \ \ \forall x \in \RR. \tag{A0}
\end{equation}
We also assume that $f$ is continuously differentiable and
the derivative $f'$ is bounded:
\begin{equation} \label{eq:A1}
\alpha \coloneqq \sup_x \abs{f'(x)} < \infty. \tag{A1}
\end{equation}
For technical reasons, we further assume that the second order derivative
$f''$ exists and for any fixed Gaussian random variable $X$,
\begin{equation} \label{eq:A2}
\E[\abs{f''(X)}] < \infty. \tag{A2}
\end{equation}

We then create a one-parameter family of connection functions using $F$.
The process can be viewed as an affine transformation of a random variable
following $\cD(0,1)$.
The ``flatness'' of the connection function is parametrized by $r$,
which measures the deviation of the random variable from a constant.
However, since the inner product of
two $d$-dimensional standard normal random vectors
has a variance of $d$, we account for it by setting the variance to $r^2d$.
We also need to match the marginal probability of an edge in the graph,
which is done by choosing an appropriate mean.
Then, we can view the connection function as the CDF of the distribution
$\cD(\mu_{p,d,r}, r^2d)$:
\begin{equation} \label{eq:sigma}
\sigma(x) := F\biggl(\frac{x-\mu_{p,d,r}}{r\sqrt{d}}\biggr),
\end{equation}
where $\mu_{p,d,r}$ is determined by setting the
edge density in the graph to be equal to~$p$:
\begin{equation*}
\P(i \sim j) = \E[\sigma(\inner{\x_i}{\x_j})]
= \E\biggl[F\biggl(\frac{\inner{\x_i}{\x_j}-\mu_{p,d,r}}{r\sqrt{d}}\biggr)
\biggr] = p.
\end{equation*}
We denote the random graph constructed in this way
by $\cG(n,p,d,r)$.

The edge generating process for $\cG(n,p,d,r)$ can also be viewed
as follows.
For each pair of vertices $i$ and $j$, we draw an independent random variable
$z_{i,j} \sim \cD(\mu_{p,d,r}, r^2d)$.
Then, conditioned on the latent positions $\x_i,\x_j$, and the random variable
$z_{i,j}$,
vertices $i$ and $j$ are connected if and only if
$\inner{\x_i}{\x_j} \ge z_{i,j}$.
In other words, for a graph with adjacency matrix $\A$,
\begin{equation*}
a_{i,j} = \ind{i \sim j} = \ind{\inner{\x_i}{\x_j} \ge z_{i,j}}.
\end{equation*}
In contrast, in the conventional definition of a random dot product graph,
the connection between $i$ and $j$ is determined by comparing the inner product
with a threshold that is a constant~\cite{liu2021phase}.

Canonical examples of the connection function, which satisfy the assumptions above, include:
\begin{itemize}
\item logistic (here $\alpha = \pi^2/(18\sqrt{3})$):
\begin{equation*}
F(x) = \frac{1}{1+\exp(-\pi x/\sqrt{3})},
\end{equation*}
\item Gaussian (here $\alpha = 1/\sqrt{2e\pi}$):
\begin{equation*}
F(x) = \Phi(x)
= \frac{1}{\sqrt{2\pi}}\int_{-\infty}^x \exp\biggl(-\frac{y^2}{2}\biggr)\,dy.
\end{equation*}
\end{itemize}
The assumptions are also satisfied broadly by many more CDFs.

For a random graph with latent geometric structure,
the only observables are the edges,
while the underlying random vectors are invisible.
It is natural to ask the question that for a given graph,
whether geometry is reflected in the combinatorial structure.
A canonical random graph model that does not possess geometry
is the \ER{} graph $\cG(n,p)$,
in which the edges are generated independently with probability $p$.
Therefore, in this work, we focus on
how $\cG(n,p,d,r)$ compares to $\cG(n,p)$.
The detection of geometry can then be understood through bounds on the total
variation distance between them which will be defined formally
in Section~\ref{se:pre}.
Roughly speaking, when the total variation distance is close to $0$,
there is no algorithm that can tell the difference.

\subsection{Main result}
Our main result is summarized as the following theorem.
\begin{theorem} \label{th:main}
Let $\cG(n,p,d,r)$ be defined above
with $p$ fixed in $(0,1)$ and $r \ge 1$.
\begin{thenum}
\item \label{th:imp}
Assume \eqref{eq:A1}.
If
\begin{equation*}
\frac{n^3}{r^4 d} \to 0,
\end{equation*}
then $\tv{\cG(n,p,d,r)}{\cG(n,p)} \to 0$.
\item \label{th:pos}
Additionally, assume \eqref{eq:A0} and \eqref{eq:A2}.
Suppose that $d/\log^2 d \gg r^6$ or $r/\log^2 r \gg d^{1/6}$.
If
\begin{equation*}
\frac{n^3}{r^6 d} \to \infty,
\end{equation*}
then $\tv{\cG(n,p,d,r)}{\cG(n,p)} \to 1$.
\end{thenum}
\end{theorem}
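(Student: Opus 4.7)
The strategy for both parts centers on the Taylor expansion of the connection function around its typical argument: writing $\tau = \mu_{p,d,r}/(r\sqrt{d})$, assumptions~\eqref{eq:A1} and~\eqref{eq:A2} give
$$
\sigma(y) - p = \frac{f_0}{r\sqrt{d}}\, y \;+\; \frac{f_0'}{2\,r^2 d}\bigl(y^2 - d\bigr) \;+\; R(y),
$$
where $f_0 = f(-\tau)$ is positive by~\eqref{eq:A0}, $f_0' = f'(-\tau)$, and $R(y)$ is a remainder controlled through $f''$. Moments of $\prod_{(i,j) \in E(H)}(\sigma_{ij}-p)$ over a subgraph $H$ then reduce, modulo the remainder, to Gaussian moments of products of inner products $\langle \x_i, \x_j\rangle$, which are computable by Isserlis' theorem.

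For part~(a), I would bound $\TV(\cG(n,p,d,r), \cG(n,p))$ via an information-theoretic divergence. Writing $P = \cG(n,p,d,r)$ and $Q = \cG(n,p)$ and introducing an independent copy $\x'$ of the latent vectors yields
$$
\chi^2(P \| Q) + 1 = \E_{\x,\x'}\!\prod_{i<j}\!\left(1 + \frac{(\sigma_{ij}-p)(\sigma'_{ij}-p)}{p(1-p)}\right),
$$
which expands as a sum indexed by subgraphs $H \subseteq K_n$, each weighted by $\bigl(\E_\x \prod_{(i,j) \in E(H)}(\sigma_{ij}-p)\bigr)^2 / (p(1-p))^{|E(H)|}$ and vanishing whenever $H$ has an isolated edge. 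Using the Taylor expansion and Isserlis' theorem, the triangle is the dominant subgraph, with scale set by $\E[\langle \x_1,\x_2\rangle \langle \x_2,\x_3\rangle \langle \x_1,\x_3\rangle] = d$; cherries, longer cycles, and trees with leaf edges are strictly smaller in the stated parameter regime. Because $\chi^2$ is sensitive to Gaussian tails, it will be necessary to truncate to the high-probability event $\bigcap_i \{\|\x_i\|^2 = d(1+o(1))\}$, bound the chi-squared conditioned on this event, and absorb the exceptional probability into $\TV$ directly.

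For part~(b), I would exhibit a distinguishing test based on the \emph{signed triangle count}
$$
T = \sum_{1 \le i < j < k \le n}(a_{ij}-p)(a_{ik}-p)(a_{jk}-p).
$$
Under $Q$, $\E_Q[T]=0$ and $\Var_Q[T] = \binom{n}{3}(p(1-p))^3 = \Theta(n^3)$. Under $P$, conditioning on $\x$ gives $\E_P[T] = \binom{n}{3} \E_\x[(\sigma_{12}-p)(\sigma_{13}-p)(\sigma_{23}-p)]$, and the leading Taylor term combined with the Gaussian triangle moment above yields $\E_P[T] \asymp n^3 f_0^3 / (r^3 \sqrt{d})$. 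A companion bound $\Var_P[T] = O(n^3)$ up to lower-order corrections follows by expanding $\E[T^2]$ and classifying pairs of triangles by their shared vertex/edge structure, estimating each contribution through Cauchy--Schwarz and the Taylor-based moment bounds. Chebyshev's inequality then separates the two distributions whenever $\E_P[T]^2 \gg \Var_P[T] + \Var_Q[T]$, equivalent to $n^3/(r^6 d) \to \infty$, so that a threshold test on $T$ certifies $\TV \to 1$.

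The central obstacle throughout is controlling the higher-order remainder $R(y)$ across products of many factors and under the joint distribution of several inner products. Assumption~\eqref{eq:A2} supplies an integrated bound on $|f''|$ against Gaussian weight, and the side conditions $d/\log^2 d \gg r^6$ and $r/\log^2 r \gg d^{1/6}$ in part~(b) are used precisely to ensure that the Gaussian concentration of $\langle \x_i,\x_j\rangle$ and $\|\x_i\|^2$ is strong enough to render the Taylor remainders negligible relative to the leading triangle term in every moment entering $\E_P[T]$ and $\Var_P[T]$. Organizing these estimates cleanly across the combinatorial zoology of subgraphs in the chi-squared expansion and of pairs of triangles in the variance of $T$ is the principal technical work.
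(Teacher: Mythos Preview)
For part~(b), your plan via the signed triangle statistic is essentially the paper's approach. One correction: the paper obtains $\Var_P[T] \le n^3 + C_\alpha n^5/(r^4 d)$, and the second term---coming from pairs of triangles sharing one or two vertices---is not automatically ``lower-order''; it is absorbed only because $n^3/(r^6 d)\to\infty$ forces $n/r^2\to\infty$. You should not write $\Var_P[T]=O(n^3)$ without qualification.

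For part~(a), your route is genuinely different from the paper's and has real gaps. The paper does \emph{not} expand the global $\chi^2(P\|Q)$ over subgraphs. Instead it applies the chain rule to $\KL(P\|Q)$ row by row, uses Jensen to replace conditioning on $\A_k$ by conditioning on the latent positions $\X_k$, and bounds each row-wise KL by a $\chi^2$. The point of this maneuver is that the per-row $\chi^2$ collapses to $\E_{\x,\x'}\bigl[(1+\gamma(\x,\x')/(p(1-p)))^k\bigr]-1$ for a \emph{single scalar} $\gamma(\x,\y)=\E_{\z}[(\sigma(\langle\x,\z\rangle)-p)(\sigma(\langle\y,\z\rangle)-p)]$, so the entire argument reduces to showing that $\gamma$ has small mean and sub-exponential tails---done via Stein's lemma, Gaussian Poincar\'e, and a log-Sobolev argument, using only~\eqref{eq:A1}. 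Your direct expansion of the full $\chi^2$ must instead control a sum over \emph{all} subgraphs with no isolated-edge component (unions of cycles, stars, theta-graphs, and arbitrary combinations thereof), and ``the triangle is dominant'' is a heuristic, not a proof; you give no mechanism for summing this combinatorial family. Separately, your second-order Taylor expansion invokes $f''$ and hence~\eqref{eq:A2}, but part~(a) of the theorem assumes only~\eqref{eq:A1}; the paper's proof of~(a) does not use~\eqref{eq:A2}. (As an aside: if your global-$\chi^2$ program could be completed, it would plausibly yield the sharper threshold $n^3/(r^6 d)\to 0$ that the paper conjectures but does not prove---the Jensen step in the chain-rule argument is exactly where the paper suspects a factor of $r^2$ is lost.)
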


Theorem~\ref{th:main} can be displayed graphically by a phase diagram
in the space of the parameter $r$ and the dimension $d$,
as shown in Figure~\ref{fg:diag}.

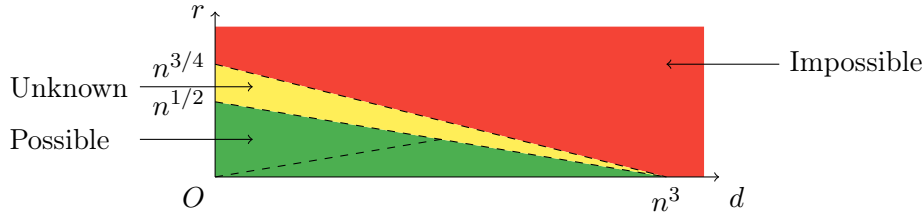
\begin{figure}[t]
\begin{center}
\begin{tikzpicture}[scale=2]
\draw[fill=green,draw=none] (0, 0) -- (0,0.5) -- (3,0);
\draw[fill=red,draw=none] (0,0.75) -- (0,1) -- (3.25,1) -- (3.25,0) -- (3,0);
\draw[fill=yellow,draw=none] (0,0.5) -- (0,0.75) -- (3,0);
\draw[->] node[below left] {$O$} (0,0) -- (3.35,0) node[below right] {$d$};
\draw[->] (0,0) -- (0,1.1) node[left] {$r$};
\draw[dashed] (0,0.5) node[left] {$n^{1/2}$} -- (3,0) node[below] {$n^3$};
\draw[dashed] (0,0.75) node[left] {$n^{3/4}$} -- (3,0);
\draw[dashed] (0,0) -- (1.5,0.25);
\node[left,text width=1.6cm](a) at (-0.5,0.25) {Possible};
\node[left,text width=1.6cm](c) at (-0.5,0.6) {Unknown};
\node[right,text width=1.8cm](b) at (3.75,0.75) {Impossible};
\draw[->] (a) -- (0.25,0.25);
\draw[->] (c) -- (0.25,0.6);
\draw[->] (b) -- (3,0.75);
\end{tikzpicture}
\end{center}
\caption{Phase diagram for detecting geometry in $\cG(n,p,d,r)$.}
\label{fg:diag}
\end{figure}

\begin{remark}
There is an intermediate regime that is not covered by the theorem.
We believe the phase transitions happen at a certain power of $r$.
We conjecture that Theorem~\ref{th:pos} gives the actual threshold.
In the proof of Theorem~\ref{th:imp},
a factor of $r^2$ might have been lost when applying Jensen's inequality
to the KL divergence (see Section~\ref{se:imp}).
\end{remark}

\begin{remark}\label{rem:soft}
The noisy high-dimensional random geometric graphs studied in
\cite{liu2021phase} do not fall into the family of graphs considered
in this work
since the connection functions there are not continuously differentiable,
which is crucial in the proofs.
Nevertheless, we can draw some comparisons between them.
We see that the signed triangle statistic gives the same detection boundary
in both models.
This could be understood as despite the discrepancies, the parameters are both a
linear measurement of the deviation from a constant.
\end{remark}

\begin{remark}
The theorem is stated in the case $r \ge 1$,
which is the regime where there is an interplay between $r$ and $d$.
When $r = o(1)$, by the data processing inequality,
the total variation distance between $\cG(n,p,d,r)$ and $\cG(n,p)$ is upper bounded by
the Wishart to GOE transition,
which gives the condition $d \gg n^3$ for the impossibility of detecting
geometry~\cite{liu2021phase}.
At the same time, by the calculation in \cite[Section~5]{liu2021phase},
the detection power for the signed triangle statistic when $r \to 0$
(the connection function becomes an indicator function) is $d \ll n^3$.
Combining these, we have that the phase transition is at $d \asymp n^3$
regardless of the rate of $r$.
\end{remark}

\subsection{Related work}
The study of random graphs generated from latent positions traces back to the
work of Gilbert \cite{gilbert1961random} in the early $1960$s,
illustrated by applications to communication networks.
After several decades,
a latent space model with great generality was proposed in \cite{hoff2002latent}
and applied to social network analysis,
which popularized the modern study of latent space graphs in statistics.
The inner product model was later generalized to a latent position graph model equipped with
a continuous positive definite kernel in \cite{tang2013universally},
where feature map estimation and universally consistent vertex classification
were discussed.
A recent work \cite{ma2020universal} studied a similar inner product model
focusing on model fitting methods.
We refer to~\cite{athreya2018statistical} for a survey on such topics
in random dot product graphs.
Another family of random graphs with latent geometric structure is the random
geometric graphs \cite{penrose2003random}.
The soft variants
wherein the connection probability depends on the distance of latent positions
through a function were also investigated in \cite{penrose2016connectivity}.

The study of random geometric graphs in high dimensions originates from the
pioneering work of Devroye, Gy\"{o}rgy, Lugosi, and Udina
\cite{devroye2011high},
where they used a multivariate central limit theorem to show
that the graph becomes similar to an \ER{} graph when the dimension grows.
Subsequent work of Bubeck, Ding, Eldan, and R\'acz \cite{bubeck2016testing}
determined that the dimension threshold is $d \asymp n^{3}$
at which the
phase transition of losing geometry happens
in dense random geometric graphs.
In prior work, we
generalized this phase transition phenomena to
a noisy setting and studied the trade-off between noise and
dimensionality for the first time~\cite{liu2021phase};
this paper is the closest to the current one,
see Remark~\ref{rem:soft} above for a discussion.
An excellent recent survey~\cite{duchemin2021random} provides a detailed summary
of progress and discussions of open questions on these problems
(see also~\cite{RB17}).

Lying underneath the loss of geometry in random graphs is
the Wishart to GOE transition in high dimensions,
and a line of work explores this direction
\cite{bubeck2016testing,jiang2015approximation,bubeck2016entropic,racz2019smooth}.
In particular,
the phase transition was shown for log-concave measures using entropy-based
methods in~\cite{bubeck2016entropic}.
This was further extended to an anisotropic setting~\cite{eldan2020information}.
In a recent work, masked Wishart matrices were considered and phase transitions
were proven to matching orders in various types of
combinatorial masks~\cite{brennan2021finetti}.

\subsection{Organization}
The rest of the paper is organized as follows.
In Section~\ref{se:pre}, we introduce several notations and key facts used
throughout the paper.
Section~\ref{se:imp} consists of the proof of Theorem~\ref{th:imp},
where several information-theoretic inequalities are used.
Detecting geometry using the signed triangle statistic is presented in
Section~\ref{se:pos}, where the main body consists of estimating the expectation
of a signed triangle in $\cG(n,p,d,r)$ in two different parameter regimes.
Applying Chebyshev's inequality with these estimates concludes the proof of
Theorem~\ref{th:pos}.

\section{Notations and preliminaries} \label{se:pre}
A graph $G = (V, E)$ is a tuple consisting of a set of vertices
$V = [n] \coloneqq \{1,2,\ldots,n\}$ and
a set of edges $E \subset \binom{[n]}{2}$,
where the collection of all subsets of a set $S$ with cardinality $k$
is denoted by $\binom{S}{k}$.
We use $\norm{\cdot}$ to denote the Euclidean norm of a vector.
For a random variable $X \in \cX$ and a measurable function $f: \cX \to \RR$,
$\norm{f(X)}_p$ denotes the $L^p$-norm of $f \in L^p(\cX)$.

Our proofs build upon various inequalities involving
$f$-divergences, which are briefed here.
To begin with, we state the definition of an $f$-divergence.\footnote{Note
that the $f$ in $f$-divergence should not be confused
with the probability density function $f$ in the description
of $\cG(n,p,d,r)$. In general, $f$ is overloaded in this section,
but the meaning of $f$ will always be clear from the context.}
\begin{definition}[$f$-divergence] \label{de:fdiv}
Let $\cP$ and $\cQ$ be probability measures on the measurable space
$(\Omega, \cF)$.
Suppose that $\cP$ is absolutely continuous with respect to $\cQ$.
For a convex function $f$ such that $f(1) = 0$,
the $f$-divergence of $\cP$ and $\cQ$ is defined as
\begin{equation*}
\Df{\cP}{\cQ} \coloneqq \E_\cQ\biggl[f\biggl(\frac{d\cP}{d\cQ}\biggr)\biggr]
= \int_\Omega f\biggl(\frac{d\cP}{d\cQ}\biggr)\,d\cQ,
\end{equation*}
where $\frac{d\cP}{d\cQ}$ is the Radon--Nikodym derivative of $\cP$ with
respect to $\cQ$.
\end{definition}

Different choices of $f$
lead to the following $f$-divergences encountered in the proofs.
\begin{definition}
Let $\cP$ and $\cQ$ be probability measures on the measurable space
$(\Omega, \cF)$ such that $\cP$ is absolutely continuous with respect to $\cQ$.
\begin{thenum}
\item \emph{Total variation distance}
(corresponding to $f(x) = \frac{1}{2}\abs{x - 1}$):
\begin{equation*}
\tv{\cP}{\cQ} \coloneqq \sup_{A \in \cF} \abs{\cP(A)-\cQ(A)}
= \frac{1}{2} \int_\Omega \biggl\lvert \frac{d\cP}{d\cQ} - 1 \biggr\rvert\,d\cQ.
\end{equation*}
\item \emph{Kullback--Leibler (KL) divergence}
(corresponding to $f(x) = x \log x$):
\begin{equation*}
\kl{\cP}{\cQ} \coloneqq \E_\cP\biggl[\log\frac{d\cP}{d\cQ}\biggr]
= \int_\Omega \frac{d\cP}{d\cQ}\log\frac{d\cP}{d\cQ}\,d\cQ.
\end{equation*}
\item \emph{(Pearson) $\chi^2$-divergence} (corresponding to $f(x) = (x-1)^2$):
\begin{equation*}
\cs{\cP}{\cQ} \coloneqq \E_\cQ\biggl[\biggl(\frac{d\cP}{d\cQ}-1\biggr)^2\biggr]
= \E_\cQ\biggl[\biggl(\frac{d\cP}{d\cQ}\biggr)^2\biggr] - 1.
\end{equation*}
\end{thenum}
\end{definition}
The $f$-divergences defined above are connected through the following
inequalities (see~\cite{gibbs2002metrics}).

\begin{proposition}[Pinsker's inequality] \label{pp:pinsker}
Let $\cP$ and $\cQ$ be probability measures on the measurable space
$(\Omega, \cF)$ such that $\cP$ is absolutely continuous with respect to $\cQ$.
Then
\begin{equation*}
\tv{\cP}{\cQ}\le \sqrt{\frac{1}{2}\kl{\cP}{\cQ}}.
\end{equation*}
\end{proposition}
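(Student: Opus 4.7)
The plan is to prove Pinsker's inequality by a classical two-step reduction: first reduce to the case of Bernoulli distributions on a two-point space, then verify the inequality there by elementary calculus. This approach nicely separates the measure-theoretic content (which is handled by the data-processing inequality for KL divergence) from the analytic content (a one-variable convexity estimate).

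First, I would argue the reduction. By the variational characterization in the definition, for any $\varepsilon > 0$ there exists $A \in \cF$ with $\tv{\cP}{\cQ} \le \abs{\cP(A) - \cQ(A)} + \varepsilon$; in fact the supremum is attained at $A^* = \{d\cP/d\cQ > 1\}$, so I may take $\varepsilon = 0$. Let $p = \cP(A^*)$ and $q = \cQ(A^*)$, and let $\tilde\cP$, $\tilde\cQ$ be the Bernoulli distributions on $\{0,1\}$ with parameters $p$ and $q$ respectively, i.e. the pushforwards of $\cP$ and $\cQ$ under $\bb1_{A^*}$. Then $\tv{\tilde\cP}{\tilde\cQ} = \abs{p - q} = \tv{\cP}{\cQ}$, while the data-processing inequality for KL divergence, which follows from Jensen's inequality applied to the conditional expectation of $d\cP/d\cQ$ given the coarser $\sigma$-algebra $\{\emptyset, A^*, (A^*)^c, \Omega\}$, gives $\kl{\tilde\cP}{\tilde\cQ} \le \kl{\cP}{\cQ}$. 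Thus it suffices to establish the inequality in the Bernoulli case.

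Next, I would prove the Bernoulli case: for all $p, q \in (0,1)$,
\begin{equation*}
2(p - q)^2 \le p \log\frac{p}{q} + (1 - p)\log\frac{1 - p}{1 - q}.
\end{equation*}
Fix $q$ and set $g(p) \coloneqq p\log(p/q) + (1-p)\log((1-p)/(1-q)) - 2(p-q)^2$. A direct computation yields $g(q) = 0$ and $g'(q) = 0$, and
\begin{equation*}
g''(p) = \frac{1}{p(1-p)} - 4 \ge 0,
\end{equation*}
since $p(1-p) \le 1/4$ on $[0,1]$. Hence $g$ is convex with a minimum at $p = q$, so $g(p) \ge 0$. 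Combining with the reduction, $\tv{\cP}{\cQ}^2 = \tv{\tilde\cP}{\tilde\cQ}^2 = (p - q)^2 \le \frac{1}{2}\kl{\tilde\cP}{\tilde\cQ} \le \frac{1}{2}\kl{\cP}{\cQ}$, which is the claim.

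There is no serious obstacle here: the only point worth checking carefully is the absolute continuity assumption, which ensures that the Radon--Nikodym derivative $d\cP/d\cQ$ is well defined and that $\kl{\cP}{\cQ}$ is the integral in the definition; if $\kl{\cP}{\cQ} = +\infty$ the inequality is trivial, so one may assume it is finite. Boundary cases where $p \in \{0,1\}$ are handled by the usual convention $0 \log 0 = 0$ and continuity of both sides as $p \to 0$ or $p \to 1$.
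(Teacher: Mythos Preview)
Your proof is correct and is the standard two-step argument (reduction to Bernoulli via data processing, then one-variable convexity). The paper itself does not prove this proposition: it simply states Pinsker's inequality as a known fact with a reference to the literature, so there is no proof in the paper to compare against. Your write-up would serve as a perfectly good self-contained justification if one were desired.
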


\begin{proposition} \label{pp:kl_chi-square}
Let $\cP$ and $\cQ$ be probability measures on the measurable space
$(\Omega, \cF)$ such that $\cP$ is absolutely continuous with respect to $\cQ$.
Then
\begin{equation*}
\kl{\cP}{\cQ} \le \log (1 + \cs{\cP}{\cQ}).
\end{equation*}
\end{proposition}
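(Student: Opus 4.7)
The plan is to derive the bound from a single application of Jensen's inequality to the concave function $\log$, combined with a change of measure from $\cP$ to $\cQ$; the whole argument is short and no substantial obstacle is expected. First I would rewrite the KL divergence in its form under $\cP$,
\begin{equation*}
\kl{\cP}{\cQ} = \E_\cP\biggl[\log\frac{d\cP}{d\cQ}\biggr],
\end{equation*}
and use the concavity of $\log$ together with Jensen's inequality (applied with respect to the probability measure $\cP$) to move the logarithm outside the expectation, obtaining
\begin{equation*}
\kl{\cP}{\cQ} \le \log \E_\cP\biggl[\frac{d\cP}{d\cQ}\biggr].
\end{equation*}

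Next I would convert the remaining expectation to one under $\cQ$ by absorbing the extra factor of $d\cP/d\cQ$ into the integrator:
\begin{equation*}
\E_\cP\biggl[\frac{d\cP}{d\cQ}\biggr] = \int \biggl(\frac{d\cP}{d\cQ}\biggr)^{2}\,d\cQ = \E_\cQ\biggl[\biggl(\frac{d\cP}{d\cQ}\biggr)^{2}\biggr],
\end{equation*}
which by the second expression for $\cs{\cP}{\cQ}$ recorded in the definition above equals $1 + \cs{\cP}{\cQ}$. Combining the two displays yields the claimed inequality. The only delicate point is the direction of Jensen's inequality, which is correct here because $\log$ is concave; the hypothesis $\cP \ll \cQ$ guarantees that the Radon--Nikodym derivative exists and that the change of measure in the second display is legitimate.
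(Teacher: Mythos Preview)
Your argument is correct and is precisely the standard proof of this inequality: Jensen applied to the concave logarithm under $\cP$, followed by the change of measure $\E_\cP[d\cP/d\cQ] = \E_\cQ[(d\cP/d\cQ)^2] = 1 + \cs{\cP}{\cQ}$. The paper itself does not give a proof of this proposition---it is stated as a known fact with a reference to~\cite{gibbs2002metrics}---so there is nothing to compare against; your proof is exactly what one would supply if asked to fill in the details.
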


For KL divergence, a useful property is the chain rule, stated as the following
proposition.
\begin{proposition}[Chain rule] \label{pp:chain}
For joint distributions $\cP_{X, Y} = \cP_{X \mid Y} \cP_Y$
and $\cQ_{X, Y} = \cQ_{X \mid Y} \cQ_Y$,
the chain rule for the KL divergence reads
\begin{equation}
\kl{\cP_{X, Y}}{\cQ_{X, Y}}
= \kl{\cP_Y}{\cQ_Y} + \E_{\cP_Y}\kl{\cP_{X \mid Y}}{\cQ_{X \mid Y}}.
\end{equation}
\end{proposition}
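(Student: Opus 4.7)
The plan is to prove the chain rule by direct computation, using the definition of KL divergence together with the multiplicative factorization of the Radon--Nikodym derivative of a joint distribution. The entire argument is a one-shot calculation once this factorization is in hand; the only genuine subtlety is measure-theoretic, namely justifying the existence and correct form of the conditional Radon--Nikodym derivatives.

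First I would write the joint Radon--Nikodym derivative as
\begin{equation*}
\frac{d\cP_{X,Y}}{d\cQ_{X,Y}}(x,y) = \frac{d\cP_Y}{d\cQ_Y}(y)\cdot \frac{d\cP_{X\mid Y=y}}{d\cQ_{X\mid Y=y}}(x),
\end{equation*}
which holds $\cQ_{X,Y}$-almost everywhere under the standing assumption $\cP_{X,Y}\ll\cQ_{X,Y}$ (this in particular forces $\cP_Y\ll\cQ_Y$ and $\cP_{X\mid Y=y}\ll\cQ_{X\mid Y=y}$ for $\cP_Y$-a.e.\ $y$). Taking logarithms splits the log-density into a sum of two terms, one depending only on $y$ and the other capturing the conditional discrepancy.

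Next I would plug this factorization into the definitional formula $\kl{\cP_{X,Y}}{\cQ_{X,Y}} = \E_{\cP_{X,Y}}\bigl[\log(d\cP_{X,Y}/d\cQ_{X,Y})\bigr]$ and use linearity of expectation to obtain
\begin{equation*}
\kl{\cP_{X,Y}}{\cQ_{X,Y}} = \E_{\cP_{X,Y}}\biggl[\log\frac{d\cP_Y}{d\cQ_Y}\biggr] + \E_{\cP_{X,Y}}\biggl[\log\frac{d\cP_{X\mid Y}}{d\cQ_{X\mid Y}}\biggr].
\end{equation*}
The first term only depends on $Y$, so marginalizing out $X$ identifies it as $\kl{\cP_Y}{\cQ_Y}$. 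For the second term I would apply the tower property: conditioning on $Y$ first turns the inner expectation into $\kl{\cP_{X\mid Y}}{\cQ_{X\mid Y}}$, so the outer expectation over $\cP_Y$ yields $\E_{\cP_Y}\kl{\cP_{X\mid Y}}{\cQ_{X\mid Y}}$. Summing the two pieces gives the claimed identity.

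The main obstacle is really the factorization step: to write $d\cP_{X\mid Y}/d\cQ_{X\mid Y}$ unambiguously one needs a regular conditional distribution, which is standard on Polish spaces (the setting relevant to the rest of the paper) but should be flagged. A minor bookkeeping point is that terms of the form $\log 0$ or $\log\infty$ must be handled with the usual KL conventions, but on the set where $d\cP_{X,Y}/d\cQ_{X,Y}>0$ the factorization is well-defined, and the remaining null set contributes nothing to any of the expectations.
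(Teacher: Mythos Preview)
Your proposal is correct and constitutes the standard proof of the chain rule for KL divergence. Note, however, that the paper does not actually prove this proposition: it is stated in the preliminaries as a known fact and used later without justification, so there is no ``paper's own proof'' to compare against. Your argument is more detailed than what the paper requires, and the measure-theoretic caveats you flag (existence of regular conditional distributions, handling of $\log 0$) are appropriate and not addressed in the paper at all.
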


We show the following lemma for Lebesgue integrable functions with bounded
derivatives.
\begin{lemma}\label{lm:der_bounded}
Suppose $f: \RR \to \RR$ is an integrable function on $\RR$
that is continuously differentiable.
If $f'$ is bounded, then $f$ is bounded as well.
Further,
let $M \coloneqq \int_\RR \abs{f(x)}\,dx < +\infty$
and $\alpha \coloneqq \sup_x\abs{f'(x)}$;
then $\sup_x \abs{f(x)} \le 2\sqrt{M\alpha}$.
\end{lemma}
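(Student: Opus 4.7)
The plan is to use the Lipschitz property of $f$ (which follows from $|f'|\le\alpha$) to show that whenever $|f|$ is large at a point, it must remain large on a nontrivial neighborhood, contributing a correspondingly large amount to $\int_\RR|f|$. This pins down the sup of $|f|$ in terms of $M$ and $\alpha$.

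First I would suppose that $|f(x_0)|\ge c$ for some $x_0\in\RR$ and some $c>0$; without loss of generality I can take $f(x_0)\ge c$ (otherwise replace $f$ by $-f$, which has the same $M$ and $\alpha$). Since $|f'|\le\alpha$ everywhere, the mean value theorem gives $|f(x)-f(x_0)|\le\alpha|x-x_0|$ for all $x$, and in particular $f(x)\ge c-\alpha|x-x_0|\ge c/2$ on the interval $I=[x_0-c/(2\alpha),\,x_0+c/(2\alpha)]$, which has length $c/\alpha$. Integrating gives
\begin{equation*}
M=\int_\RR|f(x)|\,dx\ge\int_I f(x)\,dx\ge \frac{c}{\alpha}\cdot\frac{c}{2}=\frac{c^2}{2\alpha}.
\end{equation*}
Rearranging yields $c\le\sqrt{2M\alpha}\le 2\sqrt{M\alpha}$.

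Now to get the two conclusions: boundedness comes for free, since the display above shows that any value $c$ attained by $|f|$ satisfies $c\le\sqrt{2M\alpha}<\infty$, so $\sup_x|f(x)|\le\sqrt{2M\alpha}$. Taking the supremum over all $c<\sup_x|f(x)|$ (or directly applying the argument to a sequence $x_n$ with $|f(x_n)|\to\sup|f|$) then gives the quantitative bound $\sup_x|f(x)|\le 2\sqrt{M\alpha}$ claimed in the statement.

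There is no real obstacle here; the only subtlety is making sure the Lipschitz neighborhood $I$ has length $c/\alpha$ (not something smaller) and that $f$ stays of one sign on $I$, which is automatic since $f\ge c/2>0$ there. The argument is essentially a one-dimensional ``tent'' lower bound on $\int|f|$.
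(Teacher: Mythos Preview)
Your argument is correct and is essentially the same approach as the paper's: both use the Lipschitz bound coming from $|f'|\le\alpha$ to show that a large value of $|f|$ forces a large contribution to the integral $M$ (the paper fixes an interval $[a,a+b]$, bounds $\inf|f|$ there by $M/b$, applies the mean value theorem, and optimizes $b=\sqrt{M/\alpha}$, whereas you run the tent bound directly---and in fact obtain the sharper constant $\sqrt{2M\alpha}$). The only omission is the degenerate case $\alpha=0$, where your interval $I$ has undefined length; the paper handles this separately by noting that $f$ is then constant and integrable, hence identically zero.
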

\begin{proof}
For all $a \in \RR$ and $b > 0$, consider the integral
\begin{equation*}
\int_a^{a+b}\abs{f(x)}\,dx \le \int_\RR \abs{f(x)}\,dx = M.
\end{equation*}
Let $f_m \coloneqq \inf_{x \in [a,a+b]} \abs{f(x)}$.
Since $f_m \le \abs{f(x)}$ for $x \in [a,a+b]$,
\begin{equation*}
b f_m = \int_a^{a+b}f_m\,dx \le \int_a^{a+b}\abs{f(x)}\,dx \le M,
\end{equation*}
which gives $f_m \le M/b$.

Since $f$ is continuous,
there exists a number $c \in [a, a+b]$ such that $\abs{f(c)} = f_m$.
The mean value theorem gives that for $\xi \in [a,a+b]$,
\begin{equation*}
f(a) \le f_m + \abs{f'(\xi)}b \le \frac{M}{b} + \alpha b.
\end{equation*}
If $\alpha > 0$, by choosing $b = \sqrt{M/\alpha}$, the claim directly follows.

If $\alpha = 0$, then $f$ is constant on $\RR$.
Since $f$ is integrable, $f(x) = 0$ for all $x \in \RR$.
Thus, the claim is also true.
\end{proof}
Applying Lemma~\ref{lm:der_bounded} to a probability density function,
we have the following corollary.
\begin{corollary} \label{cr:bounded}
Let $f: \RR \to \RR_+$ be a probability density function
that is continuously differentiable.
If $\sup_x\abs{f'(x)} \le \alpha$,
then $\sup_x \abs{f(x)} \le 2\sqrt{\alpha}$.
\end{corollary}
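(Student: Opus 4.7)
The plan is to apply Lemma~\ref{lm:der_bounded} as a black box, with the observation that a probability density function is automatically Lebesgue integrable with a known integral. Specifically, I would first verify the three hypotheses of the lemma for our $f$: continuous differentiability is given; boundedness of $f'$ by $\alpha$ is given; and integrability on $\RR$ follows because $f$ is a PDF, so $\int_\RR f(x)\,dx = 1 < \infty$. Crucially, because $f \ge 0$, the quantity $M$ in the lemma satisfies
\begin{equation*}
M = \int_\RR \abs{f(x)}\,dx = \int_\RR f(x)\,dx = 1.
\end{equation*}

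Substituting $M = 1$ into the lemma's conclusion $\sup_x \abs{f(x)} \le 2\sqrt{M\alpha}$ immediately gives $\sup_x \abs{f(x)} \le 2\sqrt{\alpha}$, as desired. There is no real obstacle here: the work was done in Lemma~\ref{lm:der_bounded} via the mean-value-theorem argument balancing $M/b + \alpha b$ at $b = \sqrt{M/\alpha}$, and the corollary is merely the specialization to the normalized case. The only thing worth noting is that the nonnegativity of $f$ is what allows the bound on the integral of $\abs{f}$ to equal $1$ exactly, rather than being a potentially larger quantity.
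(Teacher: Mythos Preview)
Your proposal is correct and matches the paper's own argument exactly: the paper simply states that the corollary follows by applying Lemma~\ref{lm:der_bounded} to a probability density function, which amounts precisely to observing $M = \int_\RR |f| = 1$ and substituting into the bound $2\sqrt{M\alpha}$.
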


For standard normal random variables, Stein's lemma
(also known as Gaussian integration by parts)
provides a powerful tool and
is frequently used in the proofs.
The lemma, and Stein's method built upon it, are broadly used in probability
and statistics (see, e.g., \cite[Example~13.13]{janson1997gaussian},
also \cite{chen2011stein}).
We state it as the following proposition.
\begin{proposition}(Stein's lemma \cite[Lemma~1]{stein1981mean})
Let $Y$ be a $\cN(0,1)$ real random variable and let $g: \RR \to \RR$
be an indefinite integral of the Lebesgue measurable function $g'$,
essentially the derivative of $g$.
Suppose also that $\E[\abs{g'(Y)}] < \infty$.
Then,
\begin{equation*}
\E[g'(Y)] = \E[Yg(Y)].
\end{equation*}
\end{proposition}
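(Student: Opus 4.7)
The plan is to prove the identity via integration by parts against the standard Gaussian density $\varphi(y) = (2\pi)^{-1/2} e^{-y^2/2}$, exploiting the key relation $\varphi'(y) = -y\varphi(y)$. Formally this gives
\begin{equation*}
\E[Yg(Y)] = \int_\RR y\, g(y)\varphi(y)\,dy = -\int_\RR g(y)\varphi'(y)\,dy = \int_\RR g'(y)\varphi(y)\,dy = \E[g'(Y)],
\end{equation*}
but the direct computation hides boundary terms that must be justified: $g$ is only assumed to be an indefinite integral of $g'$ (absolutely continuous), with $\E[\abs{g'(Y)}] < \infty$, so $g$ itself need not be bounded or decay at infinity.

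To sidestep any delicate growth argument, I would recast the identity via Fubini's theorem. Since $\E[Y] = 0$, we have $\E[Yg(Y)] = \E[Y(g(Y) - g(0))]$, and by absolute continuity
\begin{equation*}
g(Y) - g(0) = \int_0^Y g'(t)\,dt = \int_\RR g'(t)\, K(t, Y)\,dt,
\end{equation*}
where $K(t, y) \coloneqq \ind{0 \le t \le y} - \ind{y \le t \le 0}$. Granting that expectation and integral may be swapped, the identity reduces to evaluating the kernel $\E[Y K(t, Y)]$. For $t \ge 0$ this is $\E[Y \ind{Y \ge t}] = \int_t^\infty y\varphi(y)\,dy = \varphi(t)$, using $\varphi'(y) = -y\varphi(y)$; the symmetric computation for $t \le 0$ also gives $\varphi(t)$. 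Substituting back yields
\begin{equation*}
\E[Yg(Y)] = \int_\RR g'(t)\varphi(t)\,dt = \E[g'(Y)].
\end{equation*}

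The main obstacle, and the only technical point, is justifying the Fubini interchange. Observing that $Y K(t, Y) \ge 0$ pointwise (on the support of $K$, the signs of $Y$, $t$, and $K$ all align), Tonelli's theorem allows us to compute
\begin{equation*}
\E\biggl[\int_\RR \abs{g'(t)\, Y K(t, Y)}\,dt\biggr] = \int_\RR \abs{g'(t)}\,\E[Y K(t, Y)]\,dt = \int_\RR \abs{g'(t)}\varphi(t)\,dt = \E[\abs{g'(Y)}],
\end{equation*}
which is finite by hypothesis. This legitimizes the interchange in the signed integrand and completes the proof. An alternative route would be to establish the boundary-term vanishing $g(R)\varphi(R) \to 0$ along a subsequence $R \to \infty$ directly from $\E[\abs{g'(Y)}] < \infty$, but the Fubini approach packages the same estimate more cleanly without any truncation argument.
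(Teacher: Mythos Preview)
The paper does not prove this proposition; it is stated as a classical result with a citation to Stein's original paper and then used as a tool throughout. So there is no paper proof to compare against.

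Your argument is correct and is in fact essentially Stein's own proof: represent $g(Y)-g(0)$ as an integral of $g'$ against the signed indicator kernel $K(t,Y)$, compute $\E[YK(t,Y)]=\varphi(t)$ from $\varphi'(y)=-y\varphi(y)$, and justify the interchange by Tonelli using the hypothesis $\E[\abs{g'(Y)}]<\infty$. One small point worth making explicit is that the Tonelli step gives $\E[\abs{Y(g(Y)-g(0))}]<\infty$, which combined with $\E[\abs{Y}]<\infty$ shows $\E[\abs{Yg(Y)}]<\infty$; this guarantees the left-hand side of the identity is well-defined before you use $\E[Y]=0$ to discard the $g(0)$ term. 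With that caveat, the proof is complete.
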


For a continuously differentiable function of a standard normal random vector,
the following proposition provides a sharp bound for the variance,
known as the Gaussian Poincar\'e inequality
(see \cite[Theorem~3.20]{boucheron2013concentration}).
\begin{proposition}[Gaussian Poincar\'e inequality] \label{pp:poincare}
Suppose $\x = (x_1, \ldots, x_d)$ is a vector of i.i.d. standard Gaussian random
variables.
Let $f: \RR^d \to \RR$ be any continuously differentiable function.
Then,
\begin{equation*}
\Var[f(\x)] \le \E[\norm{\nabla f(\x)}^2].
\end{equation*}
\end{proposition}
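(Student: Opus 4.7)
The plan is to reduce to the one-dimensional case via tensorization and then prove the univariate Poincar\'e inequality using an orthogonal expansion in Hermite polynomials. The tensorization step is the identity
\[
\Var[f(X_1, \ldots, X_d)] \le \sum_{i=1}^d \E\bigl[\Var_{X_i}[f(\x) \mid X_{-i}]\bigr]
\]
valid for any independent $X_1, \ldots, X_d$ (not just Gaussians), where $X_{-i}$ denotes all coordinates except the $i$th and $\Var_{X_i}[\,\cdot\,\mid X_{-i}]$ is the conditional variance in $X_i$. This follows from the law of total variance $\Var(U) = \E[\Var(U \mid V)] + \Var(\E[U \mid V])$ applied inductively to the filtration obtained by revealing $X_1, \ldots, X_d$ one coordinate at a time, together with Jensen's inequality at each step; equivalently, it is an immediate consequence of the Efron--Stein/ANOVA decomposition.

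The univariate Poincar\'e inequality asserts that for $X \sim \cN(0,1)$ and continuously differentiable $g: \RR \to \RR$ with $\E[g'(X)^2] < \infty$, one has $\Var(g(X)) \le \E[g'(X)^2]$. To prove it, expand $g$ in the orthonormal Hermite polynomial basis $\{H_k\}_{k \ge 0}$ of $L^2(\gamma)$, where $\gamma$ is the standard Gaussian measure: $H_0 \equiv 1$, $\E[H_j(X) H_k(X)] = \delta_{jk}$, and the key differentiation identity is $H_k'(x) = \sqrt{k}\, H_{k-1}(x)$. If $g = \sum_{k \ge 0} a_k H_k$ in $L^2(\gamma)$, then $\E[g(X)] = a_0$ and $\Var(g(X)) = \sum_{k \ge 1} a_k^2$, while termwise differentiation gives $g'(X) = \sum_{k \ge 1} a_k \sqrt{k}\, H_{k-1}(X)$ and hence $\E[g'(X)^2] = \sum_{k \ge 1} k\, a_k^2 \ge \sum_{k \ge 1} a_k^2 = \Var(g(X))$. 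Combining this with the tensorization inequality, apply the 1D bound conditionally on $X_{-i}$ to the slice $t \mapsto f(X_1, \ldots, X_{i-1}, t, X_{i+1}, \ldots, X_d)$ and sum over $i$:
\[
\Var[f(\x)] \le \sum_{i=1}^d \E[(\partial_i f(\x))^2] = \E[\norm{\nabla f(\x)}^2].
\]

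The main technical obstacle is justifying the Hermite expansion and the termwise differentiation for a general continuously differentiable $f$ with $\E[\norm{\nabla f}^2] < \infty$, since the identity $g'(X) = \sum_k a_k \sqrt{k}\, H_{k-1}(X)$ requires enough smoothness and $L^2$-integrability to commute the derivative with the orthogonal sum. I would handle this by first proving the inequality for polynomials (where everything is elementary and termwise manipulations are trivially valid) and then approximating a general $f$ in the Gaussian Sobolev norm $\norm{f}_{L^2(\gamma)} + \norm{\nabla f}_{L^2(\gamma)}$, for instance by truncation together with convolution by a Gaussian mollifier, and passing to the limit on both sides. An alternative that sidesteps orthogonal expansions entirely is to run the Ornstein--Uhlenbeck semigroup $P_t f(x) = \E[f(e^{-t}x + \sqrt{1 - e^{-2t}}\, Y)]$, note that $P_t f \to \E[f]$ as $t \to \infty$, and differentiate $t \mapsto \E[(P_t f)^2]$ using the commutation $\nabla P_t f = e^{-t} P_t \nabla f$ together with Jensen's inequality, then integrate from $0$ to $\infty$ to recover the inequality directly.
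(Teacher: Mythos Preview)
The paper does not supply its own proof of this proposition; it simply states the Gaussian Poincar\'e inequality and cites \cite[Theorem~3.20]{boucheron2013concentration}. Your proposal is correct and is essentially the standard textbook argument found in that reference: tensorization via the Efron--Stein inequality to reduce to the one-dimensional case, followed by the univariate inequality (which Boucheron--Lugosi--Massart also derive through the Hermite/Ornstein--Uhlenbeck route). There is nothing to compare against in the paper itself, and your write-up, including the remark on approximation to justify the Hermite manipulations and the alternative semigroup argument, is sound.
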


We also make frequent use of the properties of sub-exponential random variables.
We state the definition in terms of the moment generating function and
the equivalent tail bound,
which appear in most texts (see, e.g.,
\cite[Definition~2.7 and Proposition~2.9]{wainwright2019high}).
\begin{definition} \label{df:subexp}
A random variable $X$ is \emph{sub-exponential} if there are nonnegative
parameters $(a, b)$ such that
\begin{equation*}
\log \E[e^{t(X-\E[X])}] \le \frac{a^2 t^2}{2}
\qquad \text{for all $\abs{t} < \frac{1}{b}$}.
\end{equation*}
\end{definition}

\begin{proposition} \label{pp:subexp}
Suppose $X$ is sub-exponential with parameters $(a, b)$.
Then,
\begin{equation*}
\P(X-\E[X] \ge t) \le \begin{cases}
\exp\bigl(-\frac{t^2}{2a^2}\bigr) \quad \text{if $0 \le t \le \frac{a^2}{b}$},\\
\exp\bigl(-\frac{t}{2b}\bigr) \quad \text{for $t > \frac{a^2}{b}$}.
\end{cases}
\end{equation*}
Equivalently,
\begin{equation*}
\P(X-\E[X] \ge t) \le
\exp\biggl(-\frac{1}{2}\min\biggl\lbrace
\frac{t^2}{a^2}, \frac{t}{b}
\biggr\rbrace\biggr).
\end{equation*}
\end{proposition}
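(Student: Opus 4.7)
The plan is to apply the standard Chernoff (exponential Markov) argument and then carefully optimize the free parameter in the moment generating function (MGF) bound, splitting into two regimes of $t$ according to where the unconstrained optimum lies.

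First I would fix $\lambda \in [0, 1/b)$ and write, by Markov's inequality applied to $e^{\lambda(X - \E X)}$,
\begin{equation*}
\P(X - \E[X] \ge t) \;\le\; e^{-\lambda t}\,\E\bigl[e^{\lambda(X-\E[X])}\bigr] \;\le\; \exp\Bigl(-\lambda t + \tfrac{a^2\lambda^2}{2}\Bigr),
\end{equation*}
where the last inequality uses Definition~\ref{df:subexp}. The task then reduces to minimizing the exponent $\varphi(\lambda) := -\lambda t + a^2 \lambda^2/2$ over the admissible interval $\lambda \in [0, 1/b)$.

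Next I would compute the unconstrained minimizer $\lambda^\star = t/a^2$ of $\varphi$ and perform a case split. When $0 \le t \le a^2/b$, the optimum $\lambda^\star = t/a^2$ is admissible, and substituting yields $\varphi(\lambda^\star) = -t^2/(2a^2)$, producing the first branch of the bound. When $t > a^2/b$, the function $\varphi$ is still decreasing at $\lambda = 1/b$, so I would take $\lambda = 1/b$, giving
\begin{equation*}
\varphi(1/b) \;=\; -\frac{t}{b} + \frac{a^2}{2b^2} \;=\; -\frac{t}{2b} - \frac{1}{2b}\Bigl(t - \frac{a^2}{b}\Bigr) \;\le\; -\frac{t}{2b},
\end{equation*}
where the last step uses the hypothesis $t > a^2/b$. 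This produces the second branch.

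Finally, to get the equivalent one-line form, I would observe that in either case the exponent is bounded above by $-\tfrac{1}{2}\min\{t^2/a^2,\, t/b\}$: in the regime $t \le a^2/b$ one has $t/b \ge t^2/a^2$ so the minimum equals $t^2/a^2$; in the regime $t > a^2/b$ the reverse holds. There is no real obstacle here; the only subtle point is handling the constraint $\lambda < 1/b$ in the second regime and checking that the extra term $a^2/(2b^2)$ is absorbed by weakening the bound from $-t/b$ to $-t/(2b)$, which is exactly what the case hypothesis permits.
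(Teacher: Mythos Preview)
Your argument is correct and is the standard Chernoff-bound derivation. The paper itself does not prove this proposition; it merely states it with a reference to \cite[Proposition~2.9]{wainwright2019high}, so there is nothing to compare against. One tiny technicality: Definition~\ref{df:subexp} only guarantees the MGF bound for $\lambda$ strictly less than $1/b$, so in the second regime you should take $\lambda \uparrow 1/b$ and use continuity of the right-hand side rather than substituting $\lambda = 1/b$ directly; this changes nothing materially.
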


The following concentration lemma regarding the normal distribution
is used in various proofs.
\begin{lemma} \label{lm:cct_norm_func}
Let $f: \RR^d \to \RR$ be a continuously differentiable function.
If the norm of the gradient satisfies
$\norm{\nabla f(\x)} \le \norm{\x}$,
then for $\x \sim \cN(\0, \I_d)$,
$f(\x)$ is sub-exponential with parameters $(2\sqrt{d},1)$.
In particular, the tails of $f(\x)$ satisfy
\begin{equation*}
\P(\abs{f(\x) - \E[f(\x)]} \ge t)
\le \begin{cases}
2\exp\bigl(-\frac{t^2}{8d}\bigr) &\text{if $0 \le t \le 4d$},\\
2\exp\bigl(-\frac{t}{2}\bigr) &\text{for $t > 4d$}.
\end{cases}
\end{equation*}
\end{lemma}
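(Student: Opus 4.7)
The plan is to first establish the sub-exponential MGF bound
\[
\log \E\bigl[e^{t(f(\x) - \E f(\x))}\bigr] \le 2 d t^2 \quad \text{for all } \abs{t} < 1,
\]
and then apply Proposition~\ref{pp:subexp} with $(a,b) = (2\sqrt{d}, 1)$ to obtain the stated tail inequalities immediately, since substituting these parameters into Proposition~\ref{pp:subexp} recovers exactly the two piecewise bounds in the display.

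For the MGF bound, I would use a Gaussian interpolation argument. Let $\y \sim \cN(\0, \I_d)$ be independent of $\x$ and, for $s \in [0, \pi/2]$, set $\x_s := \sin(s)\, \x + \cos(s)\, \y$ and $\y_s := \cos(s)\, \x - \sin(s)\, \y$. Since $(\x,\y) \mapsto (\x_s, \y_s)$ is an orthogonal transformation of $\RR^{2d}$, for each fixed $s$ the pair $(\x_s, \y_s)$ is distributed as two independent $\cN(\0, \I_d)$ vectors, and $\frac{d}{ds}\x_s = \y_s$. Integrating the chain-rule identity $\frac{d}{ds} f(\x_s) = \nabla f(\x_s) \cdot \y_s$ from $s=0$ to $s=\pi/2$ yields
\[
f(\x) - f(\y) = \int_0^{\pi/2} \nabla f(\x_s) \cdot \y_s \, ds.
\]
Applying Jensen's inequality with respect to the uniform measure on $[0, \pi/2]$, taking expectations, and conditioning on $\x_s$ to compute the Gaussian MGF of $\y_s$ reduces $\E[e^{t(f(\x)-f(\y))}]$ to an expression of the form $\sup_s \E[\exp(C t^2 \norm{\nabla f(\x_s)}^2)]$. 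The hypothesis $\norm{\nabla f(\x_s)} \le \norm{\x_s}$ bounds this by a $\chi^2_d$ MGF, so it equals $(1 - 2 C t^2)^{-d/2}$ for $\abs{t}$ small enough. Using $-\log(1-u) \le 2u$ on $[0, 1/2]$ converts this into an exponential of the required form $\exp(C' d t^2)$. Finally $\E[e^{t(f(\x) - \E f(\x))}] \le \E[e^{t(f(\x) - f(\y))}]$ by Jensen applied to the inner average over $\y$.

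The main obstacle is matching the sharp numerical constants $(2\sqrt{d}, 1)$: the naive interpolation above produces correct orders but weaker constants owing to the $\pi/2$ factor. To recover the stated constants, I would either appeal to the Gaussian log-Sobolev inequality together with Herbst's argument, setting up an ODE for the log-MGF $\lambda \mapsto \log \E[e^{\lambda f(\x)}]$ whose coefficient involves $\E[\norm{\nabla f(\x)}^2 e^{\lambda f(\x)}]$ that is directly controlled by $\norm{\x}^2$, or use a truncation method: on the event $\{\norm{\x} \le R\}$ with $R \asymp \sqrt{d}$ the function $f$ is $R$-Lipschitz, so Borell's Gaussian concentration applies on this event, while the complement is handled by the standard $\chi^2_d$ tail estimate, with $R$ optimized to produce the two-regime tail in the conclusion. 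In either route, once the centered MGF satisfies $\log \E[e^{t(f(\x) - \E f(\x))}] \le 2 d t^2$ for $\abs{t} < 1$, Proposition~\ref{pp:subexp} closes the argument.
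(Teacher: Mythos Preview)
Your proposal correctly identifies the log-Sobolev plus Herbst route as the appropriate tool, and this is indeed what the paper uses. However, there is a genuine gap in your sketch of that route: you say the Herbst ODE coefficient $\E[\norm{\nabla f(\x)}^2 e^{t f(\x)}]$ is ``directly controlled by $\norm{\x}^2$,'' but this only yields the bound $\E[\norm{\x}^2 e^{t f(\x)}]$, and the whole difficulty is that this is \emph{not} a constant times $\E[e^{tf(\x)}]$. In the textbook Herbst argument one has a uniform bound $\norm{\nabla f} \le L$, which makes the differential inequality close immediately; here the ``Lipschitz constant'' is the random, unbounded quantity $\norm{\x}$, so the loop does not close without an additional idea that you have not supplied.

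The paper's missing ingredient is the entropy duality formula
\[
\E[W e^{tZ}] \le \E[e^{tZ}]\log\E[e^W] + \Ent(e^{tZ}),
\]
applied with $W = \tfrac{e-1}{2e}\norm{\x}^2$ and $Z = f(\x)$. This decouples the weight $\norm{\x}^2$ from $e^{tf(\x)}$: the factor $\log \E[e^W]$ is a chi-squared MGF at a carefully chosen point and equals exactly $d/2$, while the extra $\Ent(e^{tZ})$ term is absorbed back into the left side of the log-Sobolev inequality. This produces $\Ent(e^{tf(\x)}) \le 2dt^2\,\E[e^{tf(\x)}]$ for $\abs{t} \le 1$, which integrates (Herbst) to $\log \E[e^{tf(\x)}] \le 2dt^2$ and gives exactly the parameters $(2\sqrt{d},1)$. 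Your interpolation argument and your truncation alternative are both reasonable paths to a result of the same order, but as you already note for the first, neither recovers these constants without a trick of comparable strength; the paper also remarks that an exponential Poincar\'e inequality gives an alternative derivation.
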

\begin{proof}
Without loss of generality, we may assume $\E[f(\x)] = 0$.

For a nonnegative random variable $Z$,
the \emph{entropy} of $Z$ is defined as
\begin{equation*}
\Ent(Z) \coloneqq \E[\varphi(Z)] - \varphi(\E[Z]),
\end{equation*}
where $\varphi(x) = x \log x$ (see, e.g.,~\cite{boucheron2013concentration};
note that this notion of entropy is not to be confused with the Shannon
entropy).

By the Gaussian logarithmic Sobolev inequality
(see, e.g., \cite[Theorem~5.4]{boucheron2013concentration}),
\begin{equation*}
\Ent(e^{tf(\x)}) \le 2\E[\norm{\nabla e^{tf(\x)/2}}^2]
= \frac{t^2}{2}\E[e^{tf(\x)}\norm{\nabla f(\x)}^2]
\le \frac{t^2}{2}\E[\norm{\x}^2 e^{tf(\x)}].
\end{equation*}

The duality formula of the entropy
(see \cite[Remark~4.4]{boucheron2013concentration})
implies that for any random variable~$W$ such that $\E[e^W] < \infty$,
the entropy of $e^{tZ}$ for a random variable $Z$ satisfies
\begin{equation*}
\E[W e^{t Z}]
\le \E[e^{t Z}]\log\E[e^{W}] + \Ent(e^{t Z}).
\end{equation*}
Applying the inequality with $Z=f(\x)$ and $W=\frac{e-1}{2e}\norm{\x}^2$,
we have that
\begin{equation*}
\E[\norm{\x}^2e^{t f(\x)}]
\le \frac{2e}{e-1}\E[e^{t f(\x)}]
\log\E\biggl[\exp\biggl(\frac{e-1}{2e}\norm{\x}^2\biggr)\biggr]
+ \Ent(e^{tf(\x)}).
\end{equation*}

Since $\x \sim \cN(\0, \I_d)$, $\norm{\x}^2$ has a chi-squared distribution
with $d$ degrees of freedom.
By the moment generating function of the chi-squared distribution, we have that
\begin{equation*}
\log\E\biggl[\exp\biggl(\frac{e-1}{2e}\norm{\x}^2\biggr)\biggr]
= \log \biggl(1 - 2 \cdot \frac{e-1}{2e}\biggr)^{-d/2}
= \frac{d}{2}.
\end{equation*}

Hence, by putting together the previous displays,
we have that
\begin{equation*}
\Ent(e^{tf(\x)})
\le \frac{t^2}{2}\biggl(\frac{ed}{e-1}\E[e^{tf(\x)}] + \Ent(e^{tf(\x)})\biggr)
\le \frac{t^2}{2}\biggl(2d\E[e^{tf(\x)}] + \Ent(e^{tf(\x)})\biggr).
\end{equation*}
By rearranging this inequality, we have that
\begin{equation*}
\Ent(e^{tf(\x)})
\leq \frac{d t^{2}}{1-t^{2}/2} \E[e^{tf(\x)}]
\le 2d t^2 \E[e^{tf(\x)}],
\end{equation*}
where the second inequality holds for $|t| \leq 1$.
Writing $M(t) \coloneqq \E[e^{tf(\x)}]$,
note that
$\Ent(e^{tf(\x)}) = t M'(t) - M(t) \log M(t)$.
Therefore the inequality in the previous display becomes
\begin{equation*}
t M'(t) - M(t) \log M(t)
\le 2d t^2 M(t).
\end{equation*}
Solving this equation exactly, and noting that $M(0)=1$, gives that
\begin{equation*}
\log \E[e^{tf(\x)}] \le 2dt^2.
\end{equation*}
By Definition~\ref{df:subexp}, the claim is hence proved.

Further by Proposition~\ref{pp:subexp}, the tail bound directly follows.
\end{proof}

\begin{remark}
This lemma can also be derived from an exponential Poincar\'e inequality
\cite[problem~3.16]{vanhandel2014probability},
as pointed out by Ramon van Handel (personal communication).
\end{remark}

The following proposition characterizes the tail behavior of the inner product
of two independent $d$-dimensional standard normal random vectors.
The sub-exponential tails in the proposition can be derived from
Lemma~\ref{lm:cct_norm_func};
however, since the function is explicit,
we prove it directly using the moment generating function,
resulting in slightly different parameters.

\begin{proposition} \label{pp:inner_gauss_cct}
For $\x, \y \in \RR^d$ independently distributed as $\cN(\0, \I_d)$,
the inner product $\inner{\x}{\y}$ is sub-exponential with parameters
$(\sqrt{2d}, \sqrt{2})$.
\end{proposition}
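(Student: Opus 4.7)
The plan is to compute the moment generating function of $\inner{\x}{\y}$ directly and compare with Definition~\ref{df:subexp}. Write $\inner{\x}{\y} = \sum_{i=1}^d x_i y_i$. Since $\E[\inner{\x}{\y}] = 0$, we need to show
\begin{equation*}
\log \E[e^{t \inner{\x}{\y}}] \le d t^2 = \frac{(\sqrt{2d})^2 t^2}{2}
\qquad \text{for all $|t| < 1/\sqrt{2}$.}
\end{equation*}

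First I would condition on $\y$. Given $\y$, the random variable $\inner{\x}{\y} = \sum_i x_i y_i$ is a linear combination of i.i.d.\ standard Gaussians, hence distributed as $\cN(0, \norm{\y}^2)$. Its conditional MGF is $\exp(t^2 \norm{\y}^2 / 2)$. Taking expectation over $\y$, and using that $\norm{\y}^2 \sim \chi^2_d$ has MGF $(1-2s)^{-d/2}$ valid for $s < 1/2$, I obtain
\begin{equation*}
\E[e^{t \inner{\x}{\y}}]
= \E\bigl[\exp\bigl(t^2 \norm{\y}^2/2\bigr)\bigr]
= (1 - t^2)^{-d/2},
\end{equation*}
valid for $|t| < 1$. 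Taking logarithms gives $\log \E[e^{t \inner{\x}{\y}}] = -\frac{d}{2}\log(1-t^2)$.

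The remaining step is the elementary inequality $-\log(1-u) \le 2u$ for $u \in [0, 1/2]$; this follows since both sides vanish at $u=0$ and the derivative comparison $\frac{1}{1-u} \le 2$ holds precisely on $[0, 1/2]$. Applying this with $u = t^2$ for $|t| < 1/\sqrt{2}$ yields $-\frac{d}{2}\log(1-t^2) \le d t^2$, which matches Definition~\ref{df:subexp} with parameters $a = \sqrt{2d}$ and $b = \sqrt{2}$.

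There is no substantive obstacle; the computation is clean because the MGF of $\inner{\x}{\y}$ is available in closed form. The only subtlety is choosing the radius of validity: the MGF formula is valid for $|t| < 1$, but the quadratic upper bound $d t^2$ forces the restriction $|t| < 1/\sqrt{2}$, which is exactly the value of $1/b$ in the claimed parameters. As noted in the remark preceding the statement, applying Lemma~\ref{lm:cct_norm_func} to $f(\x, \y) = \inner{\x}{\y}$ on $\RR^{2d}$ (whose gradient satisfies $\norm{\nabla f} = \norm{(\y, \x)}$) would also give sub-exponentiality but with parameters $(2\sqrt{2d}, 1)$, which are worse in the variance-proxy parameter $a$; the direct approach above is therefore sharper.
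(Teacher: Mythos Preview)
Your proof is correct and follows essentially the same approach as the paper: compute the moment generating function of $\inner{\x}{\y}$ in closed form as $(1-t^2)^{-d/2}$, then apply the elementary inequality $-\log(1-u) \le 2u$ on $[0,1/2]$ (equivalently $(1-u)^{-1/2} \le e^{u}$) to obtain $\log\E[e^{t\inner{\x}{\y}}] \le dt^2$ for $|t| \le 1/\sqrt{2}$. The only cosmetic difference is that the paper derives the MGF coordinate-wise via the two-dimensional Gaussian integral $\E[e^{tXY}] = (1-t^2)^{-1/2}$ and then takes the $d$-fold product, whereas you condition on $\y$ and invoke the $\chi^2_d$ MGF; both routes yield the identical expression and the rest of the argument coincides.
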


\begin{proof}
Consider two independent random variables $X, Y \sim \cN(0, 1)$.
The moment generating function of their product satisfies
\begin{equation*}
\E[e^{t XY}]
= \iint \frac{1}{2\pi}e^{t xy}e^{-(x^2+y^2)/2}\,dx\,dy
= \begin{vmatrix}
1 & -t\\
-t & 1
\end{vmatrix}^{-1/2}
= \frac{1}{\sqrt{1-t^2}}.
\end{equation*}
Since $(1-x)^{-1/2} \le e^{x}$ for $0 \le x \le 1/2$,
we have for $t^2 \le 1/2$ that
$\E[e^{t XY}] \le e^{t^2}$.

Consequently,
for two independent standard normal random vectors $\x, \y \sim \cN(\0, \I_d)$,
we have that
\begin{equation*}
\E[e^{t\inner{\x}{\y}}] = \E[e^{t\sum_{i=1}^d x_iy_i}]
= \prod_{i=1}^d \E[e^{t x_i y_i}] \le e^{d t^2}
\end{equation*}
for $t^2 \le 1/2$.
\end{proof}

We state a lemma concerning the concentration of
the inner product of two independent random vectors uniformly distributed
on the unit sphere $\SS^{d-1}$.
\begin{lemma} \label{lm:sphere_inner_cct}
For $\x,\y \in \RR^d$ independently uniformly distributed on the unit sphere
$\SS^{d-1}$, when $t \ge 1$ and $d \ge 2$,
\begin{equation*}
\P\biggl(\abs{\inner{\x}{\y}} \ge \frac{t}{\sqrt{d}}\biggr)
\le 2\exp\biggl(-\frac{t^2}{4}\biggr).
\end{equation*}
\end{lemma}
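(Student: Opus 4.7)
The plan is to use rotation invariance to reduce to a one-coordinate problem, then exploit the Gaussian representation of the uniform distribution on the sphere. First, since both $\x$ and $\y$ are rotationally invariant, I would condition on $\y$ and replace it without loss of generality by the first standard basis vector $\e_1$. This reduces the task to showing that if $\x$ is uniform on $\SS^{d-1}$ and $x_1$ is its first coordinate, then $\P(\abs{x_1} \ge t/\sqrt{d}) \le 2\exp(-t^2/4)$ for $t \ge 1$ and $d \ge 2$.

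Next, I would use the stochastic representation $\x \overset{d}{=} \g/\norm{\g}$ with $\g = (g_1,\ldots,g_d) \sim \cN(\0, \I_d)$. Setting $S \coloneqq g_2^2 + \cdots + g_d^2 \sim \chi^2_{d-1}$ (independent of $g_1$), the event of interest becomes $\{d \cdot g_1^2 \ge t^2(g_1^2 + S)\}$. When $t \ge \sqrt{d}$ the event is empty because $\abs{x_1} \le 1$, so the bound is trivial; for $t < \sqrt{d}$ I can rearrange it to the equivalent event
\begin{equation*}
\biggl\{ g_1^2 \ge \frac{t^2}{d - t^2}\, S \biggr\}.
\end{equation*}

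Conditioning on $S$ and applying the standard Gaussian tail bound $\P(g_1^2 \ge a) \le 2e^{-a/2}$, I obtain
\begin{equation*}
\P\bigl(\abs{x_1} \ge t/\sqrt{d}\bigr)
\le 2\, \E\biggl[\exp\biggl(-\frac{t^2}{2(d-t^2)}\, S\biggr)\biggr].
\end{equation*}
The chi-squared moment generating function gives $\E[e^{-\lambda S}] = (1+2\lambda)^{-(d-1)/2}$, and with $\lambda = t^2/(2(d-t^2))$ the factor $1 + 2\lambda$ simplifies to $d/(d-t^2)$. Thus the expectation equals $(1 - t^2/d)^{(d-1)/2}$, and using $1 - t^2/d \le e^{-t^2/d}$ together with $(d-1)/(2d) \ge 1/4$ for $d \ge 2$ yields the claimed bound $2\exp(-t^2/4)$.

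There is no real obstacle here: the only mild subtlety is spotting the clean algebraic rearrangement of the event into one that separates $g_1^2$ from $S$, which is what allows the conditional Gaussian tail to combine cleanly with the chi-squared MGF. Once that is set up, everything is a one-line computation, and the restriction $t \ge 1$ in the statement is not actually needed for the argument (the bound holds for all $t \ge 0$, but is only informative for $t$ sufficiently large).
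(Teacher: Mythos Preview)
Your proof is correct. Both your argument and the paper's begin identically, using rotation invariance to fix $\y = \e_1$ and the Gaussian representation $\x \overset{d}{=} \g/\norm{\g}$ to reduce to the event $\{g_1^2/\norm{\g}^2 \ge t^2/d\}$. From there the approaches diverge. The paper recognizes $g_1^2/\norm{\g}^2$ as $\mathrm{Beta}(\tfrac{1}{2},\tfrac{d-1}{2})$, integrates the Beta density directly on $[t^2/d,1]$, and controls the Gamma-function ratio via Wendel's inequality. You instead rearrange the event to isolate $g_1^2$ from $S = \sum_{i\ge 2} g_i^2$, condition on $S$, apply the Chernoff Gaussian tail bound, and then evaluate the $\chi^2_{d-1}$ moment generating function in closed form to get $(1-t^2/d)^{(d-1)/2}$. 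Your route is more elementary---no special-function estimates are needed---and, as you note, it does not actually use the hypothesis $t \ge 1$; the paper's proof does use $t \ge 1$ to absorb a $1/t$ factor arising from the Beta integral, though in exchange it gets a slightly sharper constant $2/\sqrt{\pi}$ before rounding up to $2$.
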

\begin{proof}
By rotation invariance on the $d$-dimensional sphere,
we can fix $\y = \e_1 \coloneqq (1, 0, \ldots, 0)$,
the first vector of the standard basis.
Let $\z \in \cN(\0,\I_d)$, then $\z/\norm{\z}$ is a uniform random point in
$\SS^{d-1}$.
Therefore, we have that
\begin{equation*}
\P\biggl(\abs{\inner{\x}{\y}} \ge \frac{t}{\sqrt{d}}\biggr)
= \P\biggl(\frac{\abs{z_1}}{\norm{\z}} \ge \frac{t}{\sqrt{d}}\biggr)
= \P\biggl(\frac{z_1^2}{\sum_{i=1}^d z_i^2} \ge \frac{t^2}{d}\biggr).
\end{equation*}
Since $z_i$'s are i.i.d. standard normal random variables,
$z_1^2/\sum_{i=1}^d z_i^2$ has a $\mathrm{Beta}(\frac{1}{2},\frac{d-1}{2})$
distribution.
Hence, by the density function of a beta distribution, we have that
\begin{equation*}
\P\biggl(\frac{z_1^2}{\sum_{i=1}^d z_i^2} \ge \frac{t^2}{d}\biggr)
=\frac{\Gamma(\frac{d}{2})}{\Gamma(\frac{1}{2})\Gamma(\frac{d-1}{2})}
\int_{\frac{t^2}{d}}^1 z^{-1/2}(1-z)^{(d-1)/2-1}\,dz.
\end{equation*}
By Wendel's double inequality (see \cite[equation (7)]{wendel1948note},
also \cite[equation (3.15)]{liu2021phase}), we have that
\begin{equation*}
\frac{\Gamma(\frac{d}{2})}{\Gamma(\frac{1}{2})\Gamma(\frac{d-1}{2})}
\le \sqrt{\frac{d-1}{2\pi}}.
\end{equation*}
Additionally,
\begin{equation*}
\begin{split}
\int_{\frac{t^2}{d}}^1 z^{-1/2}(1-z)^{(d-1)/2-1}\,dz
&\le \frac{\sqrt{d}}{t}\int_{\frac{t^2}{d}}^1 (1-z)^{(d-1)/2-1}\,dz
= \frac{\sqrt{d}}{t}
\biggl(-\frac{2}{d-1}(1-z)^{(d-1)/2}\biggr)\bigg\vert_{\frac{t^2}{d}}^1\\
&= \frac{2\sqrt{d}}{t(d-1)}\biggl(1-\frac{t^2}{d}\biggr)^{(d-1)/2}
\le \frac{2\sqrt{d}}{t(d-1)}\exp\biggl(-\frac{(d-1)t^2}{2d}\biggr).
\end{split}
\end{equation*}

Putting the previous displays together, we have for $t \ge 1$ and $d \ge 2$ that
\begin{equation*}
\P\biggl(\abs{\inner{\x}{\y}} \ge \frac{t}{\sqrt{d}}\biggr)
\le \frac{2}{\sqrt{\pi}}\exp\biggl(-\frac{t^2}{4}\biggr).
\end{equation*}
The claim directly follows.
\end{proof}

\begin{remark}
The proof of Lemma~\ref{lm:sphere_inner_cct} makes use of the explicit
density function of a Beta distribution.
The tails of Beta random variables have also been studied in recent years
\cite{marchal2017beta,zhang2020non},
where sub-Gaussian and Bernstein-type bounds are given respectively in
different parameter regimes.
\end{remark}

\begin{remark}
We have seen in Proposition~\ref{pp:inner_gauss_cct}
that the inner product of two $d$-dimensional standard normal random vectors
has sub-exponential tails, and since the moment generating function does not
exist when $t > 1$, the exponential rate cannot be improved.
In comparison, Lemma~\ref{lm:sphere_inner_cct} gives sub-Gaussian tails
when the random vectors are uniformly distributed on a sphere of
the same dimension,
which decays much faster.
In other words, the inner product of independent high dimensional random vectors
concentrates better on a sphere than in a Gaussian space.
\end{remark}

\section{Impossibility of detecting geometry} \label{se:imp}
In this section, we show that $\cG(n,p,d,r)$ and $\cG(n,p)$ are
indistinguishable when the dimension $d$ or the parameter $r$ is large,
thus proving Theorem~\ref{th:imp}.

We first introduce several notations used in the proofs.
We denote the adjacency matrix of $\cG(n,p,d,r)$ by $\A$.
Let $\B \in \RR^{n \times n}$ be a symmetric Bernoulli ensemble, that is,
$\{b_{i,j}\}_{1 \le i< j \le n}$ are independent Bernoulli random variables
with parameter $p$.
We also use the following shorthand notations in this section.
For the matrices $\A, \B \in \RR^{n \times n}$,
we denote their principal minor of order~$k$ by $\A_k$ and $\B_k$.
The bold lower case letter $\a_k$ denotes the last row of $\A_k$.
The $k$ by $d$ matrix consisting of the first $k$ rows of
the matrix $\X \in \RR^{n \times d}$ is denoted
by $\X_k$, and $\x_k$ denotes the $k$th row of~$\X$.

Pinsker's inequality (Proposition~\ref{pp:pinsker}) gives
\begin{equation*}
\tv{\cG(n,p,d,r)}{\cG(n,p)} \le \sqrt{\frac{1}{2}\kl{\cG(n,p,d,r)}{\cG(n,p)}}.
\end{equation*}
By the chain rule of KL divergence (Proposition~\ref{pp:chain}) we have that
\begin{equation} \label{eq:chain}
\begin{split}
\kl{\cG(n,p,d,r)}{\cG(n,p)} = \kl{\A}{\B}
&= \sum_{k=0}^{n-1} \E_{\A_k} \kl{\a_{k+1} \mid \A_k}{\b_{k+1} \mid \B_k = \A_k}\\
&= \sum_{k=0}^{n-1} \E_{\A_k} \kl{\a_{k+1} \mid \A_k}{\b_{k+1}},
\end{split}
\end{equation}
where the last equality is due to the independence of $\b_{k+1}$ and $\B_k$.
By convexity of the KL divergence
(see, e.g., \cite[Proposition~3.4]{liu2021phase}),
Jensen's inequality gives that
\begin{equation*}
\begin{split}
\E_{\A_k} \kl{\a_{k+1} \mid \A_k}{\b_{k+1}}
&\le \E_{\A_k, \X_k} \kl{\a_{k+1} \mid \A_k, \X_k}{\b_{k+1}}\\
&= \E_{\X_k} \kl{\a_{k+1} \mid \X_k}{\b_{k+1}},
\end{split}
\end{equation*}
where the last equality is because $\a_{k+1}$ and $\A_{k}$ are conditionally
independent given $\X_k$.
The KL divergence is bounded from above by the $\chi^2$
divergence (see Proposition~\ref{pp:kl_chi-square}):
\begin{equation} \label{eq:kl_chi_square_log}
\begin{split}
\E_{\X_k} \kl{\a_{k+1} \mid \X_k}{\b_{k+1}}
&\le \E_{\X_k}[\log(1+\cs{\a_{k+1} \mid \X_k}{\b_k})]\\
&\le \log(1+\E_{\X_k}\cs{\a_{k+1} \mid \X_k}{\b_k})\\
&= \log \E_{\X_k, \b_{k+1}}\biggl[
\biggl(\frac{\P_{\cG(n,p,d,r)}(\b_{k+1} \mid \X_k)}
{\P_{\cG(n,p)}(\b_{k+1})}\biggr)^2\biggr],
\end{split}
\end{equation}
where the second line is by Jensen's inequality.

By the definition of $\cG(n,p,d,r)$, we have that
\begin{equation*}
\P_{\cG(n,p,d,r)}(\b_{k+1} \mid \X_k) = \E_{\x_{k+1}}\biggl[
\prod_{1 \le i \le k}\sigma(\inner{\x_i}{\x_{k+1}})^{b_{i,k+1}}
(1 - \sigma(\inner{\x_i}{\x_{k+1}}))^{1-b_{i,k+1}}\biggr].
\end{equation*}
By an idea similar to the second moment method \cite{brennan2020phase},
we can write the square as the product of two expectations of independent copies
and then apply Fubini's theorem:
\begin{equation*}
\begin{split}
\P_{\cG(n,p,d,r)}(\b_{k+1} \mid \X_k)^2
&= \E_{\x_{k+1}}\biggl[
\prod_{1 \le i \le k}\sigma(\inner{\x_i}{\x_{k+1}})^{b_{i,k+1}}
(1 - \sigma(\inner{\x_i}{\x_{k+1}}))^{1-b_{i,k+1}}\biggr]\\
&\phantom{{}={}} \times\E_{\x_{k+1}'}\biggl[
\prod_{1 \le i \le k}\sigma(\inner{\x_i}{\x_{k+1}'})^{b_{i,k+1}}
(1 - \sigma(\inner{\x_i}{\x_{k+1}'}))^{1-b_{i,k+1}}\biggr]\\
&= \E_{\x_{k+1},\x_{k+1}'}\biggl[
\prod_{1 \le i \le k}
\Bigl(\sigma(\inner{\x_i}{\x_{k+1}})\sigma(\inner{\x_i}{\x_{k+1}'})\Bigr)
^{b_{i,k+1}}\\
&\phantom{{}={}}\times\Bigl((1 - \sigma(\inner{\x_i}{\x_{k+1}}))
(1 - \sigma(\inner{\x_i}{\x_{k+1}'}))\Bigr)^{1-b_{i,k+1}}\biggr],
\end{split}
\end{equation*}
where the last equality is by the independence of $\x_{k+1}$ and $\x'_{k+1}$.
Therefore, by interchanging the expectations and taking out the product by
independence, we obtain that
\begin{equation*}
\begin{split}
&\E_{\X_k, \b_{k+1}}\biggl[\biggl(\frac{\P_{\cG(n,p,d,r)}(\b_{k+1} \mid \X_k)}
{\P_{\cG(n,p)}(\b_{k+1})}\biggr)^2\biggr]\\
&\qquad= \E_{\x_{k+1},\x_{k+1}'}\biggl[
\prod_{1 \le i \le k} \E_{\x_i,b_{i,k+1}}\biggl[
\biggl(\frac{1}{p^2}\sigma(\inner{\x_i}{\x_{k+1}})
\sigma(\inner{\x_i}{\x_{k+1}'})\biggr)^{b_{i,k+1}}\\
&\qquad\phantom{{}={}}\times\biggl(\frac{1}{(1-p)^2}
(1 - \sigma(\inner{\x_i}{\x_{k+1}}))
(1 - \sigma(\inner{\x_i}{\x_{k+1}'}))\biggr)^{1-b_{i,k+1}}\biggr]\biggr].
\end{split}
\end{equation*}
Since each entry of $\B$ is an independent Bernoulli random variable with
parameter $p$, we can compute the inner expectation over $b_{i,k+1}$ directly,
obtaining that
\begin{equation} \label{eq:chi_square_k}
\begin{split}
&\E_{\X_k, \b_{k+1}}\biggl[\biggl(\frac{\P_{\cG(n,p,d,r)}(\b_{k+1} \mid \X_k)}
{\P_{\cG(n,p)}(\b_{k+1})}\biggr)^2\biggr]\\
&\qquad= \E_{\x_{k+1},\x_{k+1}'}\biggl[
\prod_{1 \le i \le k}\biggl(1+\frac{1}{p(1-p)}\E_{\x_i}
[(\sigma(\inner{\x_i}{\x_{k+1}})-p)(\sigma(\inner{\x_i}{\x_{k+1}'})-p)]\biggr)
\biggr]\\
&\qquad= \E_{\x_{k+1},\x_{k+1}'}\biggl[
\biggl(1+\frac{1}{p(1-p)}\E_{\x_1}
[(\sigma(\inner{\x_1}{\x_{k+1}})-p)(\sigma(\inner{\x_1}{\x_{k+1}'})-p)]\biggr)^k
\biggr],
\end{split}
\end{equation}
where the last equality holds since all $\x_i$'s are identically distributed.

Define
\begin{equation} \label{eq:gamma_def}
\gamma(\x,\y)
\coloneqq \E_{\z\sim\cN(\0,\I_d)}[(\sigma(\inner{\x}{\z})-p)
(\sigma(\inner{\y}{\z})-p)].
\end{equation}
We show the following lemma concerning $\gamma(\x,\y)$.
\begin{lemma} \label{lm:gamma_results}
Let $\x, \y \in \RR^d$ be independent standard normal random vectors.
Recall the definition of~$\sigma$,
as well as the assumption~\eqref{eq:A1}.
\begin{thenum}
\item \label{lm:gamma_E} The mean of $\gamma(\x,\y)$ satisfies
\begin{equation*}
0 \le \E[\gamma(\x,\y)] \le \frac{\alpha^2}{r^4d}.
\end{equation*}
\item \label{lm:gamma_Var} The variance of $\gamma(\x,\y)$ is upper bounded by
\begin{equation*}
\Var[\gamma(\x,\y)] \le \frac{68\alpha^2}{r^4d}.
\end{equation*}
\item \label{lm:gamma_tail}
Let $L \coloneqq \sqrt{34}\alpha/(r^2d)$.
Then, $\gamma(\x,\y)/L$ is sub-exponential with
parameters $(2\sqrt{2d},1)$, that is,
\begin{equation*}
\log\E\biggl[\exp\biggl(\frac{t}{L}(\gamma(\x,\y)-\E[\gamma(\x,\y)])\biggr)
\biggr]
\le 4 d t^2
\quad \text{for all $\abs{t} \le 1$}.
\end{equation*}
\end{thenum}
\end{lemma}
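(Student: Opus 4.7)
The three parts share a common strategy: rewrite each quantity as a function of independent Gaussian vectors, apply a Gaussian Poincar\'e-type estimate, and use Stein's lemma to extract the sharp power of $r$ that is not visible from pointwise bounds on $\sigma$. The key pointwise facts are $|\sigma'(u)| \le 2\sqrt{\alpha}/(r\sqrt{d})$ (from Corollary~\ref{cr:bounded} applied to the density $f$) and $|\sigma''(u)| = |f'((u-\mu_{p,d,r})/(r\sqrt{d}))|/(r^2 d) \le \alpha/(r^2 d)$ (using \eqref{eq:A1}). For part (a), set $h(\z) \coloneqq \E_{\x}[\sigma(\inner{\x}{\z})] - p$. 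By Fubini and independence of $\x,\y$,
$$\E[\gamma(\x,\y)] = \E_{\z}\bigl[h(\z)^2\bigr] \ge 0,$$
which handles non-negativity; since $\E_{\z}[h(\z)] = 0$ this is $\Var[h(\z)]$, and the Gaussian Poincar\'e inequality (Proposition~\ref{pp:poincare}) reduces it to bounding $\E\|\nabla h(\z)\|^2$. The naive bound via $\sup|\sigma'|$ loses a factor of $r^2$; instead, apply Stein's lemma in each coordinate $x_i$ to write
$$(\nabla_{\z} h(\z))_i = \E_{\x}[x_i \sigma'(\inner{\x}{\z})] = z_i \E_{\x}[\sigma''(\inner{\x}{\z})],$$
so that $\|\nabla h(\z)\|^2 \le \alpha^2 \|\z\|^2/(r^4 d^2)$ and $\E[\gamma] \le \alpha^2/(r^4 d)$, as required.

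For part (b), view $\gamma(\x,\y)$ as a function on $\RR^{2d}$ and apply Gaussian Poincar\'e in $(\x,\y)$. For each $i$,
$$(\nabla_{\x}\gamma)_i = \E_{\z}\bigl[z_i\, \sigma'(\inner{\x}{\z})(\sigma(\inner{\y}{\z}) - p)\bigr],$$
and integrating by parts in $z_i$ via Stein's lemma yields
$$(\nabla_{\x}\gamma)_i = x_i \E_{\z}\bigl[\sigma''(\inner{\x}{\z})(\sigma(\inner{\y}{\z}) - p)\bigr] + y_i \E_{\z}\bigl[\sigma'(\inner{\x}{\z})\sigma'(\inner{\y}{\z})\bigr].$$
The first bracket is bounded by $\alpha/(r^2 d)$ using $|\sigma-p| \le 1$, and the second by $4\alpha/(r^2 d)$ using the pointwise bound on $\sigma'$. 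An elementary $(a+b)^2 \le 2a^2+2b^2$ and summing over $i$ give
$$\|\nabla_{\x}\gamma\|^2 \le \frac{2\alpha^2}{r^4 d^2}\bigl(\|\x\|^2 + 16\|\y\|^2\bigr),$$
with the symmetric bound for $\|\nabla_{\y}\gamma\|^2$. Adding the two and using $\E\|\x\|^2 = \E\|\y\|^2 = d$ plugs into Poincar\'e to give $\Var[\gamma] \le 68\alpha^2/(r^4 d)$.

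For part (c), observe that the very same gradient estimate from part (b) reads $\|\nabla\gamma\|^2 \le L^2 \|(\x,\y)\|^2$ with $L = \sqrt{34}\alpha/(r^2 d)$, so the normalized function $\gamma/L$ satisfies the hypothesis of Lemma~\ref{lm:cct_norm_func} on $\RR^{2d}$; applying that lemma (with ambient dimension $2d$) yields sub-exponentiality of $\gamma/L$ with parameters $(2\sqrt{2d}, 1)$ and in particular the stated MGF bound $\log\E\bigl[\exp(t(\gamma-\E\gamma)/L)\bigr] \le 4dt^2$ for $|t| \le 1$. The main obstacle throughout is the sharp control of the gradient: a pointwise bound on $|\sigma'|$ loses a factor of $r^2$, and one recovers the correct rate only by integrating by parts once more to expose $\sigma''$, whose $1/(r^2 d)$ scaling is what drives all three estimates. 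In part (b) the cross-term $\sigma'(\inner{\x}{\z})\sigma'(\inner{\y}{\z})$ is what pins down the constants $68$ (hence $\sqrt{34}$ in $L$); everything else is bookkeeping.
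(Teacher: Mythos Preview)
Your proposal is correct and follows essentially the same approach as the paper: for (a) you rewrite $\E[\gamma]$ as the variance of $h(\z)=\E_{\x}[\sigma(\inner{\x}{\z})]-p$, apply Stein's lemma to the integrated variable to expose $\sigma''$, and then use Gaussian Poincar\'e; for (b) and (c) you differentiate $\gamma$ in $(\x,\y)$, apply Stein's lemma in $z_i$ to obtain the two-term gradient bound $\|\nabla\gamma\|^2 \le 34\alpha^2(\|\x\|^2+\|\y\|^2)/(r^4d^2)$, and then invoke Poincar\'e and Lemma~\ref{lm:cct_norm_func} in dimension $2d$. Up to relabeling of which variable is integrated in part (a), this matches the paper's argument and constants exactly.
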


\begin{proof}[Proof of Lemma~\ref{lm:gamma_E}]
Let
\begin{equation*}
\eta(\x) \coloneqq \E_{\z\sim\cN(0,\I_d)}[\sigma(\inner{\x}{\z})]
\qquad\text{and}\qquad
\xi(\x,\y) \coloneqq
\E_{\z\sim\cN(0,\I_d)}[\sigma(\inner{\x}{\z})\sigma(\inner{\y}{\z})].
\end{equation*}
By the construction of $\sigma$,
we have that
\begin{equation}\label{eq:eta_exp}
\E_{\x\sim\cN(0,\I_d)}[\eta(\x)] = \E[\sigma(\inner{\x}{\z})] = p.
\end{equation}
Next, we bound the variance of $\eta(\x)$. Observe that
\begin{equation*}
\frac{\partial \eta(\x)}{\partial x_i}
= \E_{\z\sim\cN(\0,\I_d)}\biggl[
\frac{\partial \sigma(\inner{\x}{\z})}{\partial x_i}\biggr]
= \E_{\z\sim\cN(\0,\I_d)}[z_i \sigma'(\inner{\x}{\z})].
\end{equation*}
Thus by Stein's lemma we have that
\begin{equation*}
\begin{split}
\frac{\partial \eta(\x)}{\partial x_i}
= \E_{\z_{-i}}[\E_{z_i}[z_i\sigma'(\inner{\x}{\z})]]
= \E_{\z_{-i}}\biggl[\E_{z_i}\biggl[
\frac{\partial \sigma'(\inner{\x}{\z})}{\partial z_i}\biggr]\biggr]
= x_i \E_{\z}[\sigma''(\inner{\x}{\z})],
\end{split}
\end{equation*}
where $\z_{-i}$ denotes the rest of $\z$ except for $z_i$.
Hence, by the definition of $\sigma$ in \eqref{eq:sigma},
\begin{equation*}
\begin{split}
\norm{\nabla \eta(\x)}^2
&= \sum_{i=1}^d\biggl(\frac{\partial \eta(\x)}{\partial x_i}\biggr)^2
= \frac{1}{r^4d^2}\sum_{i=1}^d x_i^2 \E_{\z}\biggl[
f'\biggl(\frac{\inner{\x}{\z}-\mu_{p,d,r}}{r\sqrt{d}}\biggr)\biggr]^2\\
&= \frac{\norm{\x}^2}{r^4d^2}\E_{\z}\biggl[
f'\biggl(\frac{\inner{\x}{\z}-\mu_{p,d,r}}{r\sqrt{d}}\biggr)\biggr]^2
\le \frac{\norm{\x}^2}{r^4d^2}\E_{\z}\biggl[
f'\biggl(\frac{\inner{\x}{\z}-\mu_{p,d,r}}{r\sqrt{d}}\biggr)^2\biggr]
\le \frac{\alpha^2\norm{\x}^2}{r^4d^2},
\end{split}
\end{equation*}
where we used the assumption~\eqref{eq:A1} in the last inequality.
The Gaussian Poincar\'e inequality (Proposition~\ref{pp:poincare}) thus gives
\begin{equation}\label{eq:eta_var}
\Var[\eta(\x)]
\le \E[\norm{\nabla \eta(\x)}^2]
\le \frac{\alpha^2}{r^4d^2}\E[\norm{\x}^2]
= \frac{\alpha^2}{r^4d}.
\end{equation}
Additionally, by interchanging the order of expectations we have that
\begin{equation}\label{eq:gamma_exp}
\E[\xi(\x,\y)]
= \E[\E_{\z}[\sigma(\inner{\x}{\z})\sigma(\inner{\y}{\z})]]
= \E[\E_{\x}[\sigma(\inner{\x}{\z})]\E_{\y}[\sigma(\inner{\y}{\z})]]
= \E[\eta(\z)^2].
\end{equation}
Finally, expanding the product in the definition of $\gamma(\x,\y)$
and putting together the expressions in~\eqref{eq:eta_exp},~\eqref{eq:eta_var},
and~\eqref{eq:gamma_exp},
we obtain that
\begin{equation} \label{eq:gamma_upper}
\E[\gamma(\x,\y)]
= \E[\xi(\x,\y)] - p\E[\eta(\x)] - p\E[\eta(\y)] + p^2
= \E[\xi(\x,\y)] - p^2
= \Var[\eta(\z)]
\le \frac{\alpha^2}{r^4d}.
\end{equation}
The nonnegativity of $\E[\gamma(\x,\y)]$ directly follows from it being a
variance.
\end{proof}

Recall the assumption~\eqref{eq:A1},
and note that
Corollary~\ref{cr:bounded} thus implies that $f$ is bounded and
\begin{equation} \label{eq:f_bound}
\sup_x\abs{f(x)} \le 2\sqrt{\alpha}.
\end{equation}

\begin{proof}[Proof of Lemma~\ref{lm:gamma_Var} and Lemma~\ref{lm:gamma_tail}]
Taking the partial derivative with respect to $x_i$, we have that
\begin{equation*}
\frac{\partial \gamma(\x, \y)}{\partial x_i}
= \E_{\z\sim\cN(0,\I_d)}\biggl[
\frac{\partial(\sigma(\inner{\x}{\z})-p)}{\partial x_i}
(\sigma(\inner{\y}{\z})-p)\biggr]
= \E_{\z\sim\cN(0,\I_d)}[z_i\sigma'(\inner{\x}{\z})(\sigma(\inner{\y}{\z})-p)].
\end{equation*}
By Stein's lemma, we have that
\begin{equation*}
\begin{split}
\frac{\partial \gamma(\x, \y)}{\partial x_i}
&= \E_{\z_{-i}}[\E_{z_i}[z_i\sigma'(\inner{\x}{\z})(\sigma(\inner{\y}{\z})-p)]]
= \E_{\z_{-i}}\biggl[\E_{z_i}\biggl[\frac{\partial}{\partial z_i}
(\sigma'(\inner{\x}{\z})(\sigma(\inner{\y}{\z})-p))\biggr]\biggr]\\
&= y_i \E_{\z}[\sigma'(\inner{\x}{\z})\sigma'(\inner{\y}{\z})]
+ x_i \E_{\z}[\sigma''(\inner{\x}{\z})(\sigma(\inner{\y}{\z})-p)].
\end{split}
\end{equation*}
Then, by the elementary inequality $(a + b)^2 \le 2(a^2 + b^2)$
and Jensen's inequality, we have that
\begin{equation*}
\begin{split}
\biggl(\frac{\partial \gamma(\x, \y)}{\partial x_i}\biggr)^2
&\le \frac{2y_i^2}{r^4 d^2}\E_{\z}\biggl[
f\biggl(\frac{\inner{\x}{\z}-\mu_{p,d,r}}{r\sqrt{d}}\biggr)
f\biggl(\frac{\inner{\y}{\z}-\mu_{p,d,r}}{r\sqrt{d}}\biggr)\biggr]^2\\
&\phantom{{}\le{}}+\frac{2x_i^2}{r^4 d^2}
\E_{\z}\biggl[f'\biggl(\frac{\inner{\x}{\z}-\mu_{p,d,r}}{r\sqrt{d}}\biggr)
(\sigma(\inner{\y}{\z})-p)\biggr]^2\\
&\le \frac{2y_i^2}{r^4 d^2}\E_{\z}\biggl[
f\biggl(\frac{\inner{\x}{\z}-\mu_{p,d,r}}{r\sqrt{d}}\biggr)^2
f\biggl(\frac{\inner{\y}{\z}-\mu_{p,d,r}}{r\sqrt{d}}\biggr)^2\biggr]\\
&\phantom{{}\le{}}+\frac{2x_i^2}{r^4 d^2}
\E_{\z}\biggl[f'\biggl(\frac{\inner{\x}{\z}-\mu_{p,d,r}}{r\sqrt{d}}\biggr)^2
(\sigma(\inner{\y}{\z})-p)^2\biggr]\\
&\le \frac{2\alpha^2(x_i^2 + 16y_i^2)}{r^4d^2},
\end{split}
\end{equation*}
where in the last inequality we used~\eqref{eq:A1} and~\eqref{eq:f_bound}.
Hence, we obtain that
\begin{equation} \label{eq:gamma_grad_norm}
\norm{\nabla\gamma(\x,\y)}^2
= \sum_{i=1}^d \biggl(\frac{\partial\gamma(\x,\y)}{\partial x_i}\biggr)^2
+ \sum_{i=1}^d \biggl(\frac{\partial\gamma(\x,\y)}{\partial y_i}\biggr)^2
\le \frac{34\alpha^2}{r^4d^2}(\norm{\x}^2+\norm{\y}^2).
\end{equation}
Taking expectation on both sides of the above display yields
\begin{equation*}
\E[\norm{\nabla\gamma(\x,\y)}^2]
\le \frac{34\alpha^2}{r^4d^2}\E[\norm{\x}^2+\norm{\y}^2]
= \frac{68\alpha^2}{r^4d}.
\end{equation*}
By the Gaussian Poincar\'e inequality we thus have that
\begin{equation*}
\Var[\gamma(\y,\z)] \le \E[\norm{\nabla\gamma(\y,\z)}^2]
\le\frac{68\alpha^2}{r^4d}.
\end{equation*}
Lemma~\ref{lm:gamma_Var} is hence proved.

By viewing $(\x, \y)$ as a $(2d)$-dimensional vector,
\eqref{eq:gamma_grad_norm} exactly gives the upper bound of the norm of the
gradient in terms of the norm of the vector.
Thus, by applying Lemma~\ref{lm:cct_norm_func},
the sub-exponential tails of $\gamma(\x,\y)$ in Lemma~\ref{lm:gamma_tail}
directly follow.
\end{proof}

With Lemma~\ref{lm:gamma_results} in place, we return to bounding the
KL divergence from above.
Using the definition of $\gamma(\x,\y)$,
we can express \eqref{eq:chi_square_k} with $\gamma(\x,\y)$ as
\begin{equation*}
\E_{\X_k,\b_{k+1}}\biggl[\biggl(\frac{\P_{\cG(n,p,d,r)}(\b_{k+1} \mid \X_k)}
{\P_{\cG(n,p)}(\b_{k+1})}\biggr)^2\biggr]
= \E\biggl[\biggl(1+\frac{1}{p(1-p)}\gamma(\x,\y)\biggr)^k\biggr]
\le \E\biggl[\exp\biggl(\frac{k}{p(1-p)}\gamma(\x,\y)\biggr)\biggr],
\end{equation*}
where we use the fact that $1+x \le \exp(x)$.
Using Lemma~\ref{lm:gamma_tail} with $t = Lk/(p(1-p))$,
we have, for $r^2d/k \ge \sqrt{34}\alpha/(p(1-p))$, that
\begin{equation*}
\E\biggl[\exp\biggl(\frac{k}{p(1-p)}
(\gamma(\x,\y)-\E[\gamma(\x,\y)])\biggr)\biggr]
\le \exp\biggl(\frac{136\alpha^2}{p^2(1-p)^2}\cdot\frac{k^2}{r^4d}\biggr).
\end{equation*}
Recall that we assume that $n^{3}/(r^{4}d) \to 0$,
which implies that $r^{2}d/n \to \infty$,
so the inequality in the display above holds eventually
(uniformly for all $k \leq n$).
Combined with Lemma~\ref{lm:gamma_E}, we thus have that
\begin{equation*}
\begin{split}
\E\biggl[\exp\biggl(\frac{k}{p(1-p)}\gamma(\x,\y)\biggr)\biggr]
&= \exp\biggl(\frac{k}{p(1-p)}\E[\gamma(\x,\y)]\biggr)
\E\biggl[\exp\biggl(\frac{k}{p(1-p)}
(\gamma(\x,\y)-\E[\gamma(\x,\y)])\biggr)\biggr]\\
&\le \exp\biggl(\frac{\alpha^2}{p(1-p)}\cdot\frac{k}{r^4d}
+ \frac{136\alpha^2}{p^2(1-p)^2}\cdot\frac{k^2}{r^4d}\biggr).
\end{split}
\end{equation*}

Putting the previous displays together and inserting the upper bound into
\eqref{eq:kl_chi_square_log},
we obtain that
\begin{equation*}
\E_{\X_k} \kl{\a_{k+1} \mid \X_k}{\b_{k+1}}
\le \frac{\alpha^2}{p(1-p)}\cdot\frac{k}{r^4d}
+ \frac{136\alpha^2}{p^2(1-p)^2}\cdot\frac{k^2}{r^4d}.
\end{equation*}
Plugging the above display into \eqref{eq:chain}, we conclude that
\begin{equation*}
\begin{split}
\kl{\cG(n,p,d,r)}{\cG(n,p)} = \kl{\A}{\B}
&\le \sum_{k=0}^{n-1} \biggl(\frac{\alpha^2}{p(1-p)}\cdot\frac{k}{r^4d}
+ \frac{136\alpha^2}{p^2(1-p)^2}\cdot\frac{k^2}{r^4d}\biggr)\\
&\le \frac{\alpha^2}{2p(1-p)}\cdot\frac{n^2}{r^4d}
+ \frac{68\alpha^2}{3p^2(1-p)^2}\cdot\frac{n^3}{r^4d}.
\end{split}
\end{equation*}
The asymptotic in Theorem~\ref{th:imp} directly follows.

\section{Detecting geometry using the signed triangle statistic} \label{se:pos}
In this section, we show that the geometric structure in $\cG(n,p,d,r)$
can be detected in certain parameter regimes of $d$ and $r$
using the signed triangle statistic
proposed in~\cite{bubeck2016testing},
thus proving Theorem~\ref{th:pos}.

Let $\A$ be the adjacency matrix of a sample graph $G$ with edge probability
$p$.
The signed triangle over the vertices $\{i,j,k\}$ is defined as
\begin{equation*}
\tau_{\{i,j,k\}}(G) \coloneqq (a_{i,j}-p)(a_{j,k}-p)(a_{k,i}-p).
\end{equation*}
Then, the signed triangle statistic is the sum of all possible
signed triangles in the graph:
\begin{equation*}
\tau(G)
\coloneqq \sum_{\{i,j,k\}\in\binom{[n]}{3}} \tau_{\{i,j,k\}}(G).
\end{equation*}

In $\cG(n,p)$, due to the independence of edges,
the calculation of the mean and the variance of the signed triangle statistic
is straightforward, which has been
done in \cite{bubeck2016testing}.
We state the results here as the following lemma.
\begin{lemma}
The signed triangle statistic in $\cG(n,p)$ satisfies
\begin{equation*}
\E[\tau(\cG(n,p))] = 0
\qquad\text{and}\qquad
\Var[\tau(\cG(n,p))] \le n^3.
\end{equation*}
\end{lemma}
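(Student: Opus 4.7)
The plan is to exploit the independence of edges in $\cG(n,p)$ and classify pairs of triangles by their intersection pattern, so that most pairs contribute nothing to the variance.

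For the mean, I would observe that in $\cG(n,p)$ the indicators $a_{i,j}$ are mutually independent Bernoulli$(p)$ random variables, hence the three centered factors $(a_{i,j}-p), (a_{j,k}-p), (a_{k,i}-p)$ appearing in $\tau_{\{i,j,k\}}(\cG(n,p))$ are independent with mean zero. By independence $\E[\tau_{\{i,j,k\}}(\cG(n,p))] = 0$, and linearity of expectation then gives $\E[\tau(\cG(n,p))]=0$.

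For the variance, I would expand
\begin{equation*}
\Var[\tau(\cG(n,p))]
= \sum_{T_1, T_2 \in \binom{[n]}{3}} \E[\tau_{T_1}(\cG(n,p))\,\tau_{T_2}(\cG(n,p))],
\end{equation*}
using that each $\tau_T$ is centered. The key step is to split the double sum according to $\abs{T_1 \cap T_2} \in \{0,1,2,3\}$, which determines the number of shared edges between the two triangles. In cases $\abs{T_1 \cap T_2} \in \{0, 1\}$ the six edges appearing in $\tau_{T_1}\tau_{T_2}$ are all distinct, hence independent, and the expectation factors into six copies of $\E[a-p]=0$. In the case $\abs{T_1 \cap T_2} = 2$ the two triangles share exactly one edge, so the product contains one factor $(a-p)^2$ and four independent centered factors, and the expectation again vanishes. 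Only the diagonal contribution $T_1 = T_2$ survives.

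The diagonal term is $\E[\tau_T^2] = \E[(a_{i,j}-p)^2(a_{j,k}-p)^2(a_{k,i}-p)^2] = (p(1-p))^3$ by independence. There are $\binom{n}{3}$ such triples, so
\begin{equation*}
\Var[\tau(\cG(n,p))] = \binom{n}{3}(p(1-p))^3 \le n^3,
\end{equation*}
using $p(1-p) \le 1/4 \le 1$ and $\binom{n}{3} \le n^3$. There is no real obstacle here; the only point requiring care is the case analysis by $\abs{T_1 \cap T_2}$, in particular noting that sharing two vertices forces sharing exactly one edge (not two or three), so that independence can still be invoked for the remaining four centered factors.
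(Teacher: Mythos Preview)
Your argument is correct and is precisely the standard computation. The paper does not actually prove this lemma: it simply states the result and cites \cite{bubeck2016testing}, where exactly this independence-and-intersection-pattern calculation is carried out; so your proposal supplies what the paper omits, by the same route.
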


The main goal of this section is to estimate the mean and the variance of
the signed triangle statistic in $\cG(n,p,d,r)$.

\subsection{Estimating the mean in the large variance regime}
We start our discussion with the estimate for the mean in the regime
when $r$ is large, specifically $r/\log^2 r \gg d^{1/6}$.
Before diving into the details, we first present a concentration result which
provides a tail bound for the remainder of a linear approximation of $\sigma$.
\begin{lemma} \label{lm:linear_cct}
Suppose $\x, \y \in \RR^d$ are independent standard normal random vectors.
Denote $\eta \coloneqq \E[\sigma'(\inner{\x}{\y})]$ and define the remainder of
a linear approximation of $\sigma$ as
\begin{equation*}
g(x) \coloneqq \sigma(x) - p - \eta x.
\end{equation*}
Then, the tails of $g(\inner{\x}{\y})$ satisfy that for $t \ge 6$,
\begin{equation*}
\P\biggl(\abs{g(\inner{\x}{\y})} \ge \frac{3\alpha t}{2r^2} \biggr)
\le \exp\biggl(-\sqrt{\frac{t}{2e}}\biggr).
\end{equation*}
\end{lemma}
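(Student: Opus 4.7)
The plan is to Taylor-expand $\sigma$ around $0$, identify the leading-order quadratic remainder, and then control the tail of $g(\inner{\x}{\y})$ via the sub-exponential concentration of $\inner{\x}{\y}$ from Proposition~\ref{pp:inner_gauss_cct}. First, assumption~\eqref{eq:A1} together with the definition~\eqref{eq:sigma} yields $\abs{\sigma''(x)} \le \alpha/(r^2 d)$, so Taylor's theorem gives the pointwise remainder bound $\abs{\sigma(s) - \sigma(0) - \sigma'(0) s} \le \alpha s^2/(2 r^2 d)$. Rewriting
\[
g(s) = (\sigma(0) - p) + (\sigma'(0) - \eta) s + \bigl[\sigma(s) - \sigma(0) - \sigma'(0) s\bigr],
\]
I would control the two ``correction'' coefficients by taking expectations. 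Since $\E[g(\inner{\x}{\y})] = 0$ and $\E[\inner{\x}{\y}] = 0$, the constant coefficient $\sigma(0) - p$ equals the negative of the expected Taylor remainder, which together with $\E[\inner{\x}{\y}^2] = d$ gives $\abs{\sigma(0) - p} \le \alpha/(2 r^2)$. Similarly, a first-order Taylor expansion of $\sigma'$ combined with $\E[\abs{\inner{\x}{\y}}] \le \sqrt{d}$ yields $\abs{\sigma'(0) - \eta} = \abs{\E[\sigma'(0) - \sigma'(\inner{\x}{\y})]} \le \alpha/(r^2 \sqrt{d})$.

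Combining these estimates produces the pointwise bound
\[
\abs{g(\inner{\x}{\y})} \le \frac{\alpha}{2r^2} + \frac{\alpha \abs{\inner{\x}{\y}}}{r^2\sqrt{d}} + \frac{\alpha \inner{\x}{\y}^2}{2r^2 d}.
\]
A routine check then shows that for $t \ge 6$, the inequality $\abs{\inner{\x}{\y}} \le \sqrt{dt}$ forces the right-hand side to be at most $3\alpha t/(2r^2)$, so the event $\{\abs{g(\inner{\x}{\y})} \ge 3\alpha t/(2r^2)\}$ is contained in $\{\abs{\inner{\x}{\y}} \ge \sqrt{dt}\}$. The proof then concludes by invoking Proposition~\ref{pp:inner_gauss_cct} together with the sub-exponential tail bound in Proposition~\ref{pp:subexp}.

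The main difficulty lies in arranging the constants so that the resulting tail cleanly matches $\exp(-\sqrt{t/(2e)})$. Proposition~\ref{pp:subexp} naturally splits into a sub-Gaussian regime giving $\exp(-\Omega(t))$ and a pure exponential regime giving $\exp(-\Omega(\sqrt{dt}))$; neither has exactly the stated form, and matching it requires carefully choosing the cutoff for $\abs{\inner{\x}{\y}}$ and absorbing the factor of $2$ from the two-sided tail, with the threshold $t \ge 6$ presumably entering precisely to handle the regime where the factor of $2$ would otherwise dominate.
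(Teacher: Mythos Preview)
Your Taylor expansion around $0$ is sound and yields essentially the same pointwise control as the paper: both arrive at $|g(Z)| \lesssim \tfrac{\alpha}{r^2}(1 + Z^2/d)$ with $Z = \inner{\x}{\y}$. The paper's decomposition is slightly different---it expands around an independent copy $Z'$ rather than around $0$, writing $g(Z) = \E_{Z'}[\sigma(Z)-\sigma(Z')-\sigma'(Z')(Z-Z')] - \E[\sigma'(Z')Z']$ and handling the second piece by Stein's lemma---but the resulting bound $|g(Z)| \le \tfrac{\alpha}{2r^2 d}Z^2 + \tfrac{3\alpha}{2r^2}$ matches yours up to the harmless linear middle term. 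Your containment $\{|g(Z)| \ge \tfrac{3\alpha t}{2r^2}\} \subset \{|Z| \ge \sqrt{dt}\}$ for $t \ge 6$ is also correct.

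The substantive divergence is in the final tail step, and here your plan does not quite reach the stated constants. Propositions~\ref{pp:inner_gauss_cct} and~\ref{pp:subexp} give $\P(|Z| \ge \sqrt{dt}) \le 2\exp\bigl(-\min\{t/4,\sqrt{dt/8}\}\bigr)$, which for $d \ge 1$ is at best $2\exp(-\sqrt{t/8})$. Since $8 > 2e$, this is weaker than $\exp(-\sqrt{t/(2e)})$, and one checks directly that the desired inequality fails at $t = 6$ even after optimizing the cutoff (the exponential regime, not the factor of $2$, is the bottleneck). So the sub-exponential route yields a bound of the form $C\exp(-c\sqrt{t})$ with slightly worse $c$ or a larger threshold on $t$---which, to be fair, would still suffice for the application in Lemma~\ref{lm:tau_large_r}---but not the lemma as stated. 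The paper instead bounds $L^q$ norms via Gaussian hypercontractivity: $\|Z^2\|_q \le 3d(q-1)^2$ gives $\|g(Z)\|_q \le \tfrac{3\alpha}{2r^2}\bigl((q-1)^2+1\bigr)$, and Markov with $q = \sqrt{t/e-1}+1$ produces exactly $\exp(-\sqrt{t/(2e)})$. This moment approach is what makes the constant $1/(2e)$ and the threshold $t \ge 6$ come out cleanly.
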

\begin{proof}
Let $\x',\y'\sim\cN(\0,\I_d)$ be independent copies of $\x, \y$.
For ease of notation, denote $Z \coloneqq \inner{\x}{\y}$
and $Z' \coloneqq \inner{\x'}{\y'}$.
Then, by definition,
\begin{equation*}
p = \E[\sigma(Z')]
\quad\text{and}\quad
\eta = \E[\sigma'(Z')].
\end{equation*}
Plugging them into $g(x)$ and by the triangle inequality, we have that
\begin{equation*}
\begin{split}
\abs{g(Z)}
&= \abs{\sigma(Z) - \E[\sigma(Z')] - \E[\sigma'(Z')] Z}
= \abs{\E_{Z'}[\sigma(Z)-\sigma(Z')-\sigma'(Z')(Z-Z')]
-\E[\sigma'(Z')Z']}\\
&\le \underbrace{\abs{\E_{Z'}[\sigma(Z)-\sigma(Z')-\sigma'(Z')(Z-Z')]}}_{V_1}
+\underbrace{\abs{\E[\sigma'(Z')Z']}}_{V_2}.
\end{split}
\end{equation*}
We first bound $V_1$ and $V_2$ from above and then utilize Gaussian
hypercontractivity.

By Taylor's theorem,
\begin{equation*}
\sigma(Z)-\sigma(Z')-\sigma'(Z')(Z-Z') = \frac{\sigma''(\xi)}{2}(Z-Z')^2
\end{equation*}
for some $\xi$ between $Z'$ and $Z$.
Then, we have that
\begin{equation*}
\begin{split}
V_1 &\le \E_{Z'}[\abs{\sigma(Z)-\sigma(Z')-\sigma'(Z')(Z-Z')}]
= \E_{Z'}\biggl[\frac{\abs{\sigma''(\xi)}}{2}(Z-Z')^2\biggr]
= \E_{Z'}\biggl[\frac{\abs{f'(\xi)}}{2r^2d}(Z-Z')^2\biggr]\\
&\le \frac{\alpha}{2r^2d}\E_{Z'}[(Z-Z')^2],
\end{split}
\end{equation*}
where we used the assumption \eqref{eq:A1} in the last inequality.

Since $Z' = \inner{\x'}{\y'} = \sum_{i=1}^d x'_iy'_i$,
we have that $\E[Z'] = 0$ and $\E[Z'^2]=d$.
Therefore,
\begin{equation*}
\E_{Z'}[(Z-Z')^2] = \E_{Z'}[Z^2-2ZZ'+Z'^2] = Z^2 + d.
\end{equation*}
Hence, we obtain that
\begin{equation*}
V_1 \le \frac{\alpha}{2r^2d}(Z^2 + d)
= \frac{\alpha}{2r^{2}d}Z^{2} + \frac{\alpha}{2r^{2}}.
\end{equation*}

Turning to $V_{2}$, by the triangle inequality we have that
\begin{equation*}
V_2 = \abs{\E[\sigma'(\inner{\x}{\y})\inner{\x}{\y}]}
= \biggl\lvert\sum_{i=1}^d \E[\sigma'(\inner{\x}{\y})x_iy_i]\biggr\rvert
\le \sum_{i=1}^d \abs{\E[\sigma'(\inner{\x}{\y})x_iy_i]},
\end{equation*}
Thus, by Stein's lemma and Jensen's inequality, we have that
\begin{equation*}
V_2 \le \sum_{i=1}^d \biggl\lvert
\E\biggl[y_i\frac{\partial \sigma'(\inner{\x}{\y})}{\partial x_i}\biggr]
\biggr\rvert
= \sum_{i=1}^d \abs{\E[y_i^2\sigma''(\inner{\x}{\y})]}
\le \sum_{i=1}^d \E[y_i^2\abs{\sigma''(\inner{\x}{\y})}]
\le \frac{\alpha}{r^2d}\E[\norm{\y}^2] = \frac{\alpha}{r^2}.
\end{equation*}

Therefore, by the triangle inequality, we have for any $q \geq 1$ that
\begin{equation*}
\norm{g(Z)}_q \le \norm{T_1}_q + \norm{T_2}_q
\le \frac{\alpha}{2r^2d}\norm{Z^2}_q + \frac{3\alpha}{2r^2}.
\end{equation*}

Since $Z^2 = (\sum_{i=1}^d x_i y_i)^2$ is a fourth order polynomial of
independent standard normal random variables,
by Gaussian hypercontractivity
(see \cite[Corollary~5.21]{boucheron2013concentration} for the univariate case
and \cite[Theorem~6.7]{janson1997gaussian} for a general argument),
we have that
\begin{equation*}
\norm{Z^2}_q \le (q-1)^2 \norm{Z^2}_2.
\end{equation*}
Since
\begin{equation*}
\E[Z^4] = \E\biggl[\biggl(\sum_{i=1}^d x_i y_i\biggr)^4\biggr]
= 3\sum_{i \ne j}\E[x_i^2] \E[x_j^2]\E[y_i^2]\E[y_j^2]\
+ \sum_{i}\E[x_i^4]\E[y_i^4]
= 3d^2 + 6d \le 9d^2,
\end{equation*}
we obtain that $\|Z^{2}\|_{2} \leq 3d$ and so
\begin{equation*}
\norm{g(Z)}_q \le \frac{3\alpha}{2r^2}(q^2-2q+2).
\end{equation*}

By Markov's inequality,
\begin{equation*}
\P\biggl(\abs{g(Z)} \ge \frac{3\alpha}{2r^2} t\biggr)
\le \biggl(\frac{3\alpha}{2r^2} t\biggr)^{-q}\E[\abs{g(Z)}^q]
\le t^{-q}(q^2-2q+2)^q
= \exp(-q \log t + q \log(q^2-2q+2)).
\end{equation*}
For $t \ge 3$, by choosing $q = \sqrt{t/e-1}+1$, we have that
\begin{equation*}
\P\biggl(\abs{g(Z)} \ge \frac{3\alpha t}{2r^2}\biggr)
\le \exp\biggl(-\biggl(\sqrt{\frac{t}{e}-1}+1\biggr)\biggr).
\end{equation*}
Hence, for $t \ge 6$,
\begin{equation*}
\P\biggl(\abs{g(Z)} \ge \frac{3\alpha t}{2r^2}\biggr)
\le \exp\biggl(-\sqrt{\frac{t}{2e}}\biggr). \qedhere
\end{equation*}
\end{proof}

Let
\begin{equation} \label{eq:lambda}
\lambda \coloneqq \E\biggl[
f\biggl(\frac{\inner{\x}{\y}-\mu_{p,d,r}}{r\sqrt{d}}\biggr)\biggr].
\end{equation}
Then,
\begin{equation}
\E[\sigma'(\inner{\x}{\y})] = \frac{\lambda}{r \sqrt{d}}.
\end{equation}
We also have
\begin{equation*}
0 \le \lambda \le \sup_x \abs{f(x)} \le 2\sqrt{\alpha}.
\end{equation*}
In the following lemma,
we show that $\lambda$ is bounded away from $0$ uniformly for all $r$ and $d$.
\begin{lemma} \label{lm:lambda_bounded}
Let $\lambda$ be defined in \eqref{eq:lambda}.
For a fixed $p \in (0, 1)$,
there exists a constant $C_p > 0$ that does not depend on $r$ and $d$
such that $\lambda \ge C_p$.
\end{lemma}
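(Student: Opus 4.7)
The plan is to reduce the claim to concentration of the normalized variable $Y := (\inner{\x}{\y} - \mu_{p,d,r})/(r\sqrt{d})$ in a compact interval on which $f$ is uniformly bounded below. By construction $\lambda = \E[f(Y)]$, and the edge-density constraint $\E[\sigma(\inner{\x}{\y})] = p$ reads $\E[F(Y)] = p$. Since $\Var(\inner{\x}{\y}) = d$ and $r \ge 1$, we have $\Var(Y) = 1/r^{2} \le 1$ uniformly in $r, d$.

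The main step is to extract a uniform bound on $m := \E[Y]$ from the constraint $\E[F(Y)] = p$. If $m \le -M$, then Chebyshev and monotonicity of $F$ yield
\begin{equation*}
p = \E[F(Y)] \le F(-M/2)\cdot \P(Y \le -M/2) + \P(Y > -M/2) \le F(-M/2) + 4/M^{2};
\end{equation*}
since $F(-\infty) = 0$, this is violated once $M$ is large enough in terms of $p$ and $F$ alone. A symmetric argument using $F(+\infty) = 1$ and $p < 1$ rules out $m \ge M$. Hence $|m| \le M$ for some constant $M = M(p) > 0$.

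A further application of Chebyshev yields $\P(|Y| \le M+2) \ge \P(|Y - m| \le 2) \ge 3/4$. By assumption \eqref{eq:A1}, $f$ is continuously differentiable and hence continuous; by assumption \eqref{eq:A0}, $f(y) > 0$ for every $y \in \RR$. It follows that $c_{M} := \inf_{|y| \le M+2} f(y) > 0$, and this constant depends only on $p$ and the fixed CDF $F$. Combining these bounds,
\begin{equation*}
\lambda = \E[f(Y)] \ge c_{M}\cdot \P(|Y| \le M+2) \ge \tfrac{3}{4} c_{M},
\end{equation*}
so we may take $C_{p} := 3 c_{M}/4$.

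The only nontrivial step is the second one: translating the two-sided constraint $p \in (0, 1)$ into a uniform bound on $\E[Y]$ that depends on $p$ and $F$ but not on $r$ or $d$. The remaining ingredients are standard Chebyshev concentration plus a compactness argument for the strictly positive continuous density $f$.
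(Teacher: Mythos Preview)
Your proof is correct and follows the same overall architecture as the paper's: (i) use the constraint $\E[F(Y)] = p$ together with concentration of $Y$ to pin down $\E[Y]$ in a bounded interval depending only on $p$ and $F$; (ii) use concentration again to show $Y$ lands in a fixed compact interval with probability bounded away from $0$; (iii) invoke continuity and strict positivity of $f$ on that interval.

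The one substantive difference is the concentration tool. The paper uses the sub-exponential tail bound for $\inner{\x}{\y}$ (Proposition~\ref{pp:inner_gauss_cct}) to obtain exponential tails for $Z - \E[Z]$, and then locates $\E[Z]$ between explicit quantiles $z_{p/2}$ and $z_{7p/4}$. You instead exploit the single observation $\Var(Y) = 1/r^{2} \le 1$ and apply Chebyshev's inequality. Your route is more elementary and arguably cleaner, since it avoids the sub-exponential machinery entirely; the paper's route yields somewhat more explicit constants tied to quantiles of $F$. Either way the argument goes through, and the dependence of the final constant on $p$ and $F$ (but not on $r,d$) is the same.
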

\begin{proof}
Let
\begin{equation*}
Z \coloneqq \frac{\inner{\x}{\y} - \mu_{p,d,r}}{r\sqrt{d}}.
\end{equation*}
By Proposition~\ref{pp:inner_gauss_cct}, we have
\begin{equation*}
\P\biggl(Z - \E[Z] \ge \frac{t}{r\sqrt{d}}\biggr) \le
\exp\biggl(-\frac{1}{2}\min\biggl\lbrace
\frac{t^2}{2d}, \frac{t}{\sqrt{2}}
\biggr\rbrace\biggr),
\end{equation*}
which gives
\begin{equation*}
\P(Z - \E[Z] \ge t) \le
\exp\biggl(-\frac{1}{2}\min\biggl\lbrace
\frac{t^2 r^2}{2}, \frac{t r \sqrt{d}}{\sqrt{2}}
\biggr\rbrace\biggr).
\end{equation*}
Since $r \ge 1$ and $d \ge 1$, we have for $t \ge 1$,
\begin{equation} \label{eq:Z_tail}
\P(Z - \E[Z] \ge t)
\le \exp\biggl(-\frac{t}{4}\biggr).
\end{equation}
The lower tail directly follows from $Z-\E[Z]$ being symmetric about zero.

Without loss of generality, we may assume $p \in (0, 1/2]$.
(If $p \in (1/2, 1)$, we can consider the connection function $1 - F(x)$,
which shares the same properties with $F(x)$.)
Since $F$ is strictly monotonic when $F \in (0, 1)$,
the inverse $F^{-1}$ exists in $(0, 1)$.
Let $z_{p/2} \coloneqq F^{-1}(p/2)$.
We first show by contradiction that $\E[Z] > z_{p/2} - 4\log\frac{4}{p}$.
Suppose that it does not hold, that is, $\E[Z] \le z_{p/2} - 4\log\frac{4}{p}$.
Then, by the monotonicity of $F$,
\begin{equation*}
\begin{split}
p &= \E[F(Z)] = \E[F(Z)\ind{Z < z_{p/2}}] + \E[F(Z)\ind{Z \ge z_{p/2}}]
\le \frac{p}{2} + \P(Z \ge z_{p/2})\\
&\le \frac{p}{2} + \P\biggl(Z \ge \E[Z] + 4\log\frac{4}{p}\biggr),
\end{split}
\end{equation*}
where the last inequality holds since
$\{Z \ge z_{p/2}\} \subset \{Z \ge \E[Z] + 4\log\frac{4}{p}\}$.
Using \eqref{eq:Z_tail}, we have that
\begin{equation*}
p \le \frac{p}{2} + \frac{p}{4} = \frac{3}{4}p,
\end{equation*}
which is a contradiction since $p > 0$.
Hence, $\E[Z] > z_{p/2} - 4\log\frac{4}{p}$.

Similarly, we show that $\E[Z] < z_{7p/4} + 4\log 3$,
where $z_{7p/4} \coloneqq F^{-1}(7p/4)$.
Again suppose $\E[Z] \ge z_{7p/4} + 4\log 3$.
Then, by the monotonicity of $F$,
\begin{equation*}
\begin{split}
p &= \E[F(Z)] \ge \E[F(Z)\ind{Z \ge z_{7p/4}}]
\ge \frac{7p}{4} \P(Z \ge z_{7p/4})
\ge \frac{7p}{4} \P(Z \ge \E[Z] - 4\log 3),
\end{split}
\end{equation*}
where the last inequality holds since
$\{Z \ge z_{7p/4}\} \supset \{Z \ge \E[Z] - 4\log 3\}$.
Therefore, by the lower tail bound,
\begin{equation*}
p \ge \frac{7p}{4} (1 - \P(Z < \E[Z] - 4\log 3))
\ge \frac{7p}{4}\biggl(1 - \frac{1}{3}\biggr) = \frac{7}{6}p,
\end{equation*}
which is a contradiction since $p > 0$.
Hence, $\E[Z] < z_{7p/4} + 4\log 3$.

Consider the interval
$I \coloneqq [z_{p/2} - 4\log\frac{4}{p} - 4, z_{7p/4} + 4\log 3 + 4]$.
The interval is bounded since $F$ is a CDF.
Since $f$ is a continuous function and strictly positive in the closed interval
by assumption~\eqref{eq:A0},
using the extreme value theorem,
we have that
$\inf_{x \in I} f(x) \ge f(\xi) = \epsilon_p > 0$ for some $\xi \in I$.
Therefore,
\begin{equation*}
\begin{split}
\E[f(Z)] &\ge \E[f(Z) \ind{Z \in I}] \ge \epsilon_p \P(Z \in I)\\
&= \epsilon_p \biggl(1 - \P\biggl(Z < z_{p/2} - 4\log\frac{4}{p} - 4\biggr)
- \P(Z > z_{7p/4} + 4\log 3 + 4)\biggr)\\
&\ge \epsilon_p (1 - \P(Z \le \E[Z] - 4) - \P(Z \ge \E[Z]+ 4))
\ge \biggl(1- \frac{2}{e}\biggr)\epsilon_p.
\end{split}
\end{equation*}
The claim directly follows.
\end{proof}

For ease of notation, we use the shorthand
\begin{equation*}
\brsigma(t) \coloneqq \sigma(t) - p.
\end{equation*}
Define the following events:
\begin{align*}
A_1 &\coloneqq \biggl\lbrace\biggl\lvert\brsigma(\inner{\x}{\y})
-\frac{\lambda}{r\sqrt{d}}\inner{\x}{\y}\biggr\rvert
\le \frac{3\alpha t}{2r^2}\biggr\rbrace,\\
A_2 &\coloneqq \biggl\lbrace\biggl\lvert\brsigma(\inner{\y}{\z})
-\frac{\lambda}{r\sqrt{d}}\inner{\y}{\z}\biggr\rvert
\le \frac{3\alpha t}{2r^2}\biggr\rbrace,\\
A_3 &\coloneqq \biggl\lbrace\biggl\lvert\brsigma(\inner{\z}{\x})
-\frac{\lambda}{r\sqrt{d}}\inner{\z}{\x}\biggr\rvert
\le \frac{3\alpha t}{2r^2}\biggr\rbrace,
\end{align*}
and let $A \coloneqq A_1 \cap A_2 \cap A_3$.
Then, on the event $A$ we have that
\begin{equation*}
\biggl\lvert
\biggl(\brsigma(\inner{\x}{\y})-\frac{\lambda}{r\sqrt{d}}\inner{\x}{\y}\biggr)
\biggl(\brsigma(\inner{\y}{\z})-\frac{\lambda}{r\sqrt{d}}\inner{\y}{\z}\biggr)
\biggl(\brsigma(\inner{\z}{\x})-\frac{\lambda}{r\sqrt{d}}\inner{\z}{\x}\biggr)
\biggr\rvert
\le \frac{27\alpha^3t^3}{8r^6}.
\end{equation*}
Expanding the product, on the event $A$ we have that
\begin{equation*}
\begin{split}
\tau(\x,\y,\z)
&\coloneqq
\brsigma(\inner{\x}{\y})\brsigma(\inner{\y}{\z})\brsigma(\inner{\z}{\x})\\
&\ge
\frac{\lambda^3}{r^3d^{3/2}}\inner{\x}{\y}\inner{\y}{\z}\inner{\z}{\x}
- \frac{27\alpha^3t^3}{8r^6}\\
&\phantom{{}\ge{}}
+ \frac{\lambda}{r\sqrt{d}}\biggl(
\inner{\x}{\y}\brsigma(\inner{\y}{\z})\brsigma(\inner{\z}{\x})
- \frac{\lambda}{r\sqrt{d}}\inner{\x}{\y}\inner{\y}{\z}\brsigma(\inner{\z}{\x})
\biggr)\\
&\phantom{{}\ge{}}
+ \frac{\lambda}{r\sqrt{d}}\biggl(
\inner{\y}{\z}\brsigma(\inner{\x}{\y})\brsigma(\inner{\z}{\x})
- \frac{\lambda}{r\sqrt{d}}\inner{\x}{\y}\inner{\z}{\x}\brsigma(\inner{\x}{\y})
\biggr)\\
&\phantom{{}\ge{}}
+ \frac{\lambda}{r\sqrt{d}}\biggl(
\inner{\z}{\x}\brsigma(\inner{\x}{\y})\brsigma(\inner{\y}{\z})
- \frac{\lambda}{r\sqrt{d}}\inner{\z}{\x}\inner{\x}{\y}\brsigma(\inner{\y}{\z})
\biggr).
\end{split}
\end{equation*}
The last three terms are identically distributed.
Denote
\begin{equation*}
\xi(\x,\y,\z) \coloneqq
\inner{\x}{\y}\brsigma(\inner{\y}{\z})\brsigma(\inner{\z}{\x})
-\frac{\lambda}{r\sqrt{d}}\inner{\x}{\y}\inner{\y}{\z}\brsigma(\inner{\z}{\x}).
\end{equation*}
Then, by taking the expectation on the event $A$, we have that
\begin{equation*}
\begin{split}
\E[\tau(\x,\y,\z)\bb1_A]
&\ge
\frac{\lambda^3}{r^3d^{3/2}}\E[\inner{\x}{\y}\inner{\y}{\z}\inner{\z}{\x}\bb1_A]
- \frac{27\alpha^3t^3}{8r^6}\E[\bb1_A]
+ \frac{3\lambda}{r\sqrt{d}}\E[\xi(\x,\y,\z)\bb1_A]\\
&\ge \frac{\lambda^3}{r^3d^{3/2}}\E[\inner{\x}{\y}\inner{\y}{\z}\inner{\z}{\x}]
- \frac{\lambda^3}{r^3d^{3/2}}
\abs{\E[\inner{\x}{\y}\inner{\y}{\z}\inner{\z}{\x}\bb1_{A^C}]}
- \frac{27\alpha^3t^3}{8r^6}\\
&\phantom{{}\ge{}} - \frac{3\lambda}{r\sqrt{d}}\abs{\E[\xi(\x,\y,\z)]}
- \frac{3\lambda}{r\sqrt{d}}\abs{\E[\inner{\x}{\y}\brsigma(\inner{\y}{\z})
\brsigma(\inner{\z}{\x})\bb1_{A^C}]}\\
&\phantom{{}\ge{}}- \frac{3\lambda^2}{r^2d}
\abs{\E[\inner{\x}{\y}\inner{\y}{\z}\brsigma(\inner{\z}{\x})\bb1_{A^C}]}.
\end{split}
\end{equation*}
As we will see, the first term in the above display is equal to
$\lambda^3/(r^3\sqrt{d})$,
while all other terms vanish when $r$ is large.
The rest of this subsection is devoted to bounding them.

By independence of the random vectors and their coordinates, we have that
\begin{equation*}
\begin{aligned}
\E[\inner{\x}{\y}] &= \E\biggl[\sum_{i=1}^d x_i y_i\biggr]
= \sum_{i=1}^d \E[x_i]\E[y_i] = 0,\\
\E[\inner{\x}{\y}\inner{\y}{\z}] &= \E\biggl[\biggl(\sum_{i=1}^d x_i y_i\biggr)
\biggl(\sum_{j=1}^d y_j z_j\biggr)\biggr]
= \sum_{i,j=1}^d \E[x_iy_iy_jz_j]
= 0,\\
\E[\inner{\x}{\y}\inner{\y}{\z}\inner{\z}{\x}]
&= \E\biggl[\biggl(\sum_{i=1}^d x_i y_i\biggr)
\biggl(\sum_{j=1}^d y_j z_j\biggr)
\biggl(\sum_{k=1}^d z_k x_k\biggr)\biggr]
= \sum_{i=1}^d \E[x_i^2 y_i^2 z_i^2] = d.
\end{aligned}
\end{equation*}
By similar combinatorial calculations, we also have that
\begin{align*}
\E[(\inner{\x}{\y})^2]
&= \sum_{i} \E[x_i^2y_i^2] = d,\\
\E[(\inner{\x}{\y}\inner{\y}{\z})^2]
&= \sum_{i,j} \E[x_i^2y_i^2y_j^2z_j^2]
= \sum_{i,j} \E[y_i^2y_j^2]
= d^2 + 2d \le 3d^2,\\
\E[(\inner{\x}{\y}\inner{\y}{\z}\inner{\z}{\x})^2]
&= \sum_{i,j,k} \E[x_i^2x_k^2]\E[y_i^2y_j^2]\E[z_j^2z_k^2]
+ \sum_{i \ne j}\E[x_i^2]\E[x_j^2]\E[y_i^2]\E[y_j^2]\E[z_i^2]\E[z_j^2]\\
&= d^3 + 10d^2 + 16d
\le 27d^3.
\end{align*}
Then, by Cauchy--Schwarz inequality, we obtain that
\begin{align*}
\abs{\E[\inner{\x}{\y}\inner{\y}{\z}\inner{\z}{\x} \bb1_{A^C}]}
&\le \sqrt{\E[(\inner{\x}{\y}\inner{\y}{\z}\inner{\z}{\x})^2]
\E[\bb1_{A^C}]}
\le 3d^{3/2}\sqrt{3\P(A^C)},\\
\abs{\E[\inner{\x}{\y}\inner{\y}{\z}\brsigma(\inner{\z}{\x})\bb1_{A^C}]}
&\le \sqrt{\E[(\inner{\x}{\y}\inner{\y}{\z})^2]
\E[\brsigma(\inner{\z}{\x})^2\bb1_{A^C}]}
\le d\sqrt{3\P(A^C)},\\
\abs{\E[\inner{\x}{\y}\brsigma(\inner{\y}{\z})
\brsigma(\inner{\z}{\x})\bb1_{A^C}]}
&\le \sqrt{\E[(\inner{\x}{\y})^2]
\E[\brsigma(\inner{\y}{\z})^2\brsigma(\inner{\z}{\x})^2
\bb1_{A^C}]}
\le \sqrt{d \P(A^C)}.
\end{align*}

As the last step, we bound $\abs{\E[\xi(\x,\y,\z)]}$ from above.
By Stein's lemma, we have that
\begin{equation*}
\begin{split}
\E[\inner{\x}{\y}\brsigma(\inner{\y}{\z})\brsigma(\inner{\z}{\x})]
&= \sum_{i=1}^d \E[x_i y_i \brsigma(\inner{\y}{\z})\brsigma(\inner{\z}{\x})]
= \sum_{i=1}^d \E[\E_{x_i}[x_i\brsigma(\inner{\y}{\z})]
\E_{y_i}[y_i \brsigma(\inner{\z}{\x})]]\\
&= \sum_{i=1}^d \E[z_i^2 \sigma'(\inner{\y}{\z})\sigma'(\inner{\z}{\x})]
= \E[\norm{\z}^2 \sigma'(\inner{\y}{\z})\sigma'(\inner{\z}{\x})].
\end{split}
\end{equation*}
Additionally with independence, we have that
\begin{equation*}
\begin{split}
\E[\inner{\x}{\y}\inner{\y}{\z}\brsigma(\inner{\z}{\x})]
&= \sum_{i,j=1}^d \E[x_i y_i y_j z_j\brsigma(\inner{\z}{\x})]
= \sum_{i=1}^d \E[x_i y_i^2 z_i\brsigma(\inner{\z}{\x})]
= \sum_{i=1}^d \E[x_i z_i\brsigma(\inner{\z}{\x})]\\
&= \sum_{i=1}^d \E[z_i^2\sigma'(\inner{\z}{\x})]
= \E[\norm{\z}^2\sigma'(\inner{\z}{\x})].
\end{split}
\end{equation*}
Therefore, by Cauchy--Schwarz inequality,
\begin{equation*}
\begin{split}
&\biggl\lvert
\E[\inner{\x}{\y}\brsigma(\inner{\y}{\z})\brsigma(\inner{\z}{\x})]
- \frac{\lambda}{r\sqrt{d}}
\E[\inner{\x}{\y}\inner{\y}{\z}\brsigma(\inner{\z}{\x})]
\biggr\rvert\\
&\quad= \abs{\E[\norm{\z}^2\sigma'(\inner{\z}{\x})
(\sigma'(\inner{\y}{\z})-\E[\sigma'(\inner{\y}{\z})])]}
\le \sqrt{\E[\norm{\z}^4\sigma'(\inner{\z}{\x})^2]
\Var[\sigma'(\inner{\y}{\z})]}.
\end{split}
\end{equation*}
Since
\begin{equation*}
\E[\norm{\z}^4] = \E\biggl[\biggl(\sum_{i=1}^d z_i^2\biggr)^2\biggr]
= \sum_{i \ne j}\E[z_i^2]\E[z_j^2] + \sum_{i}\E[z_i^4]
= d^2 + 2d \le 3d^2,
\end{equation*}
we have that
\begin{equation*}
\E[\norm{\z}^4\sigma'(\inner{\z}{\x})^2]
\le \frac{4\alpha}{r^2d} \E[\norm{\z}^4]
\le \frac{12\alpha d}{r^2}.
\end{equation*}
Since
\begin{equation*}
\frac{\partial \sigma'(\inner{\y}{\z})}{\partial y_i}
= z_i \sigma''(\inner{\y}{\z}),
\end{equation*}
by Gaussian Poincar\'e inequality,
\begin{equation*}
\Var[\sigma'(\inner{\y}{\z})]
\le \E[\norm{\nabla \sigma'(\inner{\y}{\z})}^2]
= \E[(\norm{\y}^2+\norm{\z}^2)\sigma''(\inner{\y}{\z})^2]
\le \frac{2\alpha^2}{r^4d}.
\end{equation*}
Hence, we obtain that
\begin{equation*}
\abs{\E[\xi(\x,\y,\z)]}
= \biggl\lvert
\E[\inner{\x}{\y}\sigma(\inner{\y}{\z})\sigma(\inner{\z}{\x})]
- \frac{\lambda}{r\sqrt{d}}
\E[\inner{\x}{\y}\inner{\y}{\z}\sigma(\inner{\z}{\x})]
\biggr\rvert
\le \frac{2\sqrt{6}\alpha^{3/2}}{r^3}.
\end{equation*}

Putting all the estimates together, we have that
\begin{equation*}
\E[\tau(\x,\y,\z)\bb1_A]
\ge \frac{\lambda^3}{r^3\sqrt{d}}
- \frac{6\sqrt{6}\alpha^{3/2}\lambda}{r^4\sqrt{d}}
- \frac{27\alpha^3t^3}{8r^6}
- 3\biggl(\frac{\lambda}{r}+\frac{\lambda^2}{r^2}+\frac{\lambda^3}{r^3}\biggr)
\sqrt{3\P(A^C)}.
\end{equation*}

Since by Cauchy--Schwarz inequality,
\begin{equation*}
\abs{\E[\tau(\x,\y,\z)\bb1_{A^C}]} \le \sqrt{\E[\tau(\x,\y,\z)^2]\E[\bb1_{A^C}]}
\le \sqrt{\P(A^C)},
\end{equation*}
we have that for $r \ge 2\sqrt{\alpha} \ge \lambda$,
\begin{equation*}
\E[\tau(\x,\y,\z)] = \E[\tau(\x,\y,\z)\bb1_A] + \E[\tau(\x,\y,\z)\bb1_{A^C}]
\ge \frac{\lambda^3}{r^3\sqrt{d}}
- \frac{12\sqrt{6}\alpha^2}{r^4\sqrt{d}}
- \frac{27\alpha^3t^3}{8r^6} - 10\sqrt{3\P(A^C)}.
\end{equation*}

A union bound gives
\begin{equation*}
\P(A^C) \le \sum_{i=1}^3 \P(A_i^C) \le 3\exp\biggl(-\sqrt{\frac{t}{2e}}\biggr),
\end{equation*}
where the tail bounds come directly from Lemma~\ref{lm:linear_cct}.
By choosing $t = 512e\log^2 r$, we have that
\begin{equation*}
\P(A^C) \le \frac{3}{r^{16}}.
\end{equation*}
To conclude,
\begin{equation*}
\E[\tau(\x,\y,\z)]
\ge \frac{\lambda^3}{r^3\sqrt{d}}
- \frac{12\sqrt{6}\alpha^2}{r^4\sqrt{d}}
- \frac{27(256e\alpha)^3\log^6 r}{r^6} - \frac{30}{r^8}.
\end{equation*}
Therefore, by assuming $r/\log^2 r \gg d^{1/6}$,
there exists a constant $C_{\alpha} > 0$
such that for $r \ge C_{\alpha}$,
\begin{equation*}
\E[\tau(\x,\y,\z)] \ge \frac{\lambda^3}{2r^3\sqrt{d}}.
\end{equation*}
Combining the above display with Lemma~\ref{lm:lambda_bounded},
we obtain the following lemma.
\begin{lemma} \label{lm:tau_large_r}
For $r/\log^2 r \gg d^{1/6}$,
there exist constants $C_{\alpha}, C_p > 0$
such that for $r \ge C_{\alpha}$,
\begin{equation*}
\E[\tau(\cG(n,p,d,r))] \ge \frac{C_pn^3}{r^3\sqrt{d}}.
\end{equation*}
\end{lemma}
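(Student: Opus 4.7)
The plan is to exploit linearity of expectation and the identical distribution of triples to reduce $\E[\tau(\cG(n,p,d,r))]$ to $\binom{n}{3}\E[\tau(\x,\y,\z)]$, where $\tau(\x,\y,\z) \coloneqq \brsigma(\inner{\x}{\y})\brsigma(\inner{\y}{\z})\brsigma(\inner{\z}{\x})$ for three independent standard Gaussian vectors and $\brsigma(t) = \sigma(t) - p$. The central idea is to linearize $\sigma$ around the origin. Using Stein's lemma one obtains $\eta \coloneqq \E[\sigma'(\inner{\x}{\y})] = \lambda/(r\sqrt{d})$, with $\lambda$ as in~\eqref{eq:lambda}. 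Writing $\brsigma(t) = \eta t + g(t)$ with remainder $g(t) = \sigma(t)-p-\eta t$, the product $\brsigma(\inner{\x}{\y})\brsigma(\inner{\y}{\z})\brsigma(\inner{\z}{\x})$ expands into a leading cubic term $\eta^{3}\inner{\x}{\y}\inner{\y}{\z}\inner{\z}{\x}$ plus mixed terms involving $g$.

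The leading term has expectation $\eta^{3}\E[\inner{\x}{\y}\inner{\y}{\z}\inner{\z}{\x}] = \eta^{3}d = \lambda^{3}/(r^{3}\sqrt{d})$ via a direct Gaussian moment calculation in which only the diagonal contributions survive. To control the remaining terms I would condition on the good event $A$ on which the linear approximation is tight along all three edges, that is, $\abs{g(\inner{\x}{\y})}, \abs{g(\inner{\y}{\z})}, \abs{g(\inner{\z}{\x})}$ are all of order $\alpha t/r^{2}$ for a tail parameter $t$ chosen polylogarithmically in $r$. Lemma~\ref{lm:linear_cct} supplies the needed super-polynomial tail bound on $g$ via Gaussian hypercontractivity, and a union bound then makes $\P(A^{c})$ as small as $r^{-16}$. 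On $A$ the cubic remainder is at most $O(\alpha^{3}t^{3}/r^{6})$, while contributions on $A^{c}$ are handled by Cauchy--Schwarz together with the trivial bound $\abs{\brsigma} \le 1$ and the Gaussian moment estimates $\E[(\inner{\x}{\y}\inner{\y}{\z}\inner{\z}{\x})^{2}] \lesssim d^{3}$, yielding terms of order $\sqrt{\P(A^{c})}$ times polynomial factors in $d$.

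The main technical obstacle is the mixed linear-times-quadratic cross terms, namely $\eta\,\E[\inner{\x}{\y}\brsigma(\inner{\y}{\z})\brsigma(\inner{\z}{\x})]$ and $\eta^{2}\E[\inner{\x}{\y}\inner{\y}{\z}\brsigma(\inner{\z}{\x})]$, since these do not vanish by symmetry. My plan is to evaluate both expectations explicitly by applying Stein's lemma coordinatewise: each reduces to a quantity built from $\E[\norm{\z}^{2}\sigma'(\inner{\y}{\z})\sigma'(\inner{\z}{\x})]$ or $\E[\norm{\z}^{2}\sigma'(\inner{\z}{\x})]$, and after the correct normalization their difference factors through $\Var[\sigma'(\inner{\y}{\z})]$. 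This variance is $O(\alpha^{2}/(r^{4}d))$ by the Gaussian Poincaré inequality applied to $\sigma'$, whose gradient is controllable by~\eqref{eq:A1}. A Cauchy--Schwarz step then bounds the whole cross contribution by $O(\alpha^{2}/(r^{4}\sqrt{d}))$, which is smaller than the leading term by a factor $1/r$.

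Collecting the estimates gives
\begin{equation*}
\E[\tau(\x,\y,\z)] \ge \frac{\lambda^{3}}{r^{3}\sqrt{d}}
- O\!\left(\frac{\alpha^{2}}{r^{4}\sqrt{d}}\right)
- O\!\left(\frac{\log^{6} r}{r^{6}}\right)
- O\!\left(\frac{1}{r^{8}}\right).
\end{equation*}
Under the hypothesis $r/\log^{2} r \gg d^{1/6}$ the cubic-in-$1/r$ error dominates the $1/r^{6}$ term (since $r^{6} \gg d^{1/2}\log^{12}r$), and the remaining errors are absorbed once $r \ge C_{\alpha}$, giving $\E[\tau(\x,\y,\z)] \ge \lambda^{3}/(2r^{3}\sqrt{d})$. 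Finally, Lemma~\ref{lm:lambda_bounded} supplies a uniform lower bound $\lambda \ge C_{p} > 0$ independent of $r$ and $d$, and summing over the $\binom{n}{3} = \Theta(n^{3})$ triples yields the claimed bound. The delicate step is the cancellation in the linear-times-quadratic cross terms: without the Stein--Poincaré reduction one would only get $O(1/(r^{2}\sqrt{d}))$ bounds that exceed the leading $\lambda^{3}/(r^{3}\sqrt{d})$.
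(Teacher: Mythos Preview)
Your proposal is correct and follows essentially the same route as the paper: linearize $\brsigma$ via $g$, isolate the leading term $\eta^{3}d=\lambda^{3}/(r^{3}\sqrt{d})$, restrict to the good event $A$ furnished by Lemma~\ref{lm:linear_cct} with $t\asymp\log^{2}r$, handle $A^{c}$ by Cauchy--Schwarz, and control the cross term $\xi$ via Stein's lemma plus the Poincar\'e bound on $\Var[\sigma'(\inner{\y}{\z})]$, finishing with Lemma~\ref{lm:lambda_bounded}. One small slip: in your parenthetical comparison you wrote $r^{6}\gg d^{1/2}\log^{12}r$, but the inequality actually needed (and actually implied by $r/\log^{2}r\gg d^{1/6}$) is $r^{3}\gg d^{1/2}\log^{6}r$, equivalently $r^{6}\gg d\,\log^{12}r$; this does not affect the argument.
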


\subsection{Estimating the mean in the high dimension regime}
In this part, we focus on the case when $d/\log^2 d \gg r^{6}$,
which complements the parameter regime discussed in the previous subsection.

We start by bounding the probability of the following two events in
$\cG(n,p,d,r)$:
\begin{equation*}
\Lambda \coloneqq \{1 \sim 2, 1 \sim 3\}
\qquad\text{and}\qquad
\Delta \coloneqq \{1 \sim 2, 2 \sim 3, 3 \sim 1\}.
\end{equation*}

Since
\begin{equation*}
\P(\Lambda) = \E[a_{1,2}a_{1,3}]
= \E[\sigma(\inner{\x_1}{\x_2})\sigma(\inner{\x_1}{\x_3})],
\end{equation*}
we directly have the following lemma as a consequence of \eqref{eq:gamma_upper}.
\begin{lemma} \label{lm:prob_cherry}
Let $\Lambda$ be defined above.
Then,
\begin{equation*}
\P(\Lambda) \le p^2 + \frac{\alpha^2}{r^4d}.
\end{equation*}
\end{lemma}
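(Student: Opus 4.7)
The plan is to reduce the bound on $\P(\Lambda)$ directly to computations that already appeared in the proof of Lemma~\ref{lm:gamma_E}. First, by the definition of $\cG(n,p,d,r)$ and the tower property,
\begin{equation*}
\P(\Lambda) = \E[a_{1,2} a_{1,3}] = \E[\sigma(\inner{\x_1}{\x_2}) \sigma(\inner{\x_1}{\x_3})].
\end{equation*}
Conditioning on $\x_1$ and using the mutual independence of $\x_2$ and $\x_3$ yields
\begin{equation*}
\P(\Lambda) = \E_{\x_1}[\E_{\x_2}[\sigma(\inner{\x_1}{\x_2})] \cdot \E_{\x_3}[\sigma(\inner{\x_1}{\x_3})]] = \E[\eta(\x_1)^2],
\end{equation*}
where $\eta(\x) = \E_{\z \sim \cN(\0, \I_d)}[\sigma(\inner{\x}{\z})]$ as introduced in the proof of Lemma~\ref{lm:gamma_E}. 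Equivalently, relabelling $\x_1 \mapsto \z$, $\x_2 \mapsto \x$, $\x_3 \mapsto \y$ exhibits $\P(\Lambda)$ as exactly $\E_{\x,\y}[\xi(\x,\y)]$.

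It then suffices to invoke~\eqref{eq:gamma_upper}, which established
\begin{equation*}
\E[\xi(\x,\y)] = p^2 + \Var[\eta(\z)] \le p^2 + \frac{\alpha^2}{r^4 d},
\end{equation*}
using~\eqref{eq:eta_exp} for the mean of $\eta$ and~\eqref{eq:eta_var} for its variance (the latter being the Gaussian Poincar\'e bound built on assumption~\eqref{eq:A1}). Since every ingredient has already been proved, there is no genuine obstacle; the only care required is in matching the dummy variable of $\xi$ with the common vertex of the cherry, and noting that the independence of the three latent vectors allows the conditional expectations to factor.
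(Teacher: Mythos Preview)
Your proof is correct and follows exactly the same route as the paper: the paper simply states that the lemma is a direct consequence of~\eqref{eq:gamma_upper}, and you have spelled out the identification $\P(\Lambda) = \E[\eta(\x_1)^2] = \E[\xi(\x,\y)]$ that makes this immediate.
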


The probability of a triangle in $\cG(n,p,d,r)$ is given by
\begin{equation*}
\P(\Delta) = \E[a_{1,2}a_{2,3}a_{3,1}]
= \E[\sigma(\inner{\x_1}{\x_2})\sigma(\inner{\x_2}{\x_3})
\sigma(\inner{\x_3}{\x_1})].
\end{equation*}
We have the following estimate for the lower bound of the probability of
a triangle.
\begin{lemma} \label{lm:prob_tri}
Suppose $d/\log^2 d \gg r^6$.
There exists a constant $C_{\alpha} > 0$ such that
for $d \ge C_{\alpha}$,
\begin{equation*}
\P(\Delta) \ge p^3 + \frac{\lambda^3}{4r^3\sqrt{d}}.
\end{equation*}
\end{lemma}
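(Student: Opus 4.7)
The plan is to carry out a Taylor expansion of $\E[\sigma(u_{12})\sigma(u_{23})\sigma(u_{31})]$ by writing $\sigma(u) - p = \eta u + g(u)$, where $\eta = \lambda/(r\sqrt{d})$ and $g(u) = \sigma(u) - p - \eta u$ is the nonlinear remainder (whose $L^q$ tails are controlled by Lemma~\ref{lm:linear_cct} and which has $\E[g(\inner{\x}{\y})] = 0$). The identity $\P(\Delta) - p^3 = \E[\tau(\x,\y,\z)] + 3p\,\E[\gamma(\x,\y)] \geq \E[\tau(\x,\y,\z)]$, where the last inequality uses Lemma~\ref{lm:gamma_E}, reduces the problem to showing $\E[\tau] \geq \lambda^3/(4r^3\sqrt{d})$. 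Expanding the triple product $\prod_{\mathrm{cyc}}(\eta u_{ij} + g_{ij})$, where $u_{ij} = \inner{\x_i}{\x_j}$ and $g_{ij} = g(u_{ij})$, isolates the leading contribution $\eta^3 \E[u_{12}u_{23}u_{31}] = \eta^3 d = \lambda^3/(r^3\sqrt{d})$ via the identity $\E[u_{12}u_{23}u_{31}] = d$, and it remains to show that the other terms are of smaller order when $d/\log^2 d \gg r^6$.

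For the $\eta^2 \E[uug]$ and $\eta \E[ugg]$ cross terms, Stein's lemma is the main workhorse. The identity $\E_{\x_2}[\inner{\x_1}{\x_2}\inner{\x_2}{\x_3}] = \inner{\x_1}{\x_3}$ collapses $\E[u_{12}u_{23}g_{31}]$ to $\E[u_{13}\,g(u_{13})]$, which a further Stein step rewrites as $\mathrm{Cov}(\norm{\x}^2, h(\norm{\x}))$ for $h(s) := \E_Z[\sigma'(sZ)]$. Using $\Var(\norm{\x}^2) = 2d$ together with the uniform bound $\abs{h'(s)} \leq \sqrt{2/\pi}\,\alpha/(r^2 d)$ yields $\abs{\E[u_{12}u_{23}g_{31}]} = O(\alpha/(r^2\sqrt{d}))$, so $\eta^2$ times this is $O(\alpha\lambda^2/(r^4 d^{3/2}))$. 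An analogous iterated-Stein calculation gives $\abs{\E[u_{12}g_{23}g_{31}]} = O(\alpha^2/(r^4 d))$, and hence a contribution of $O(\alpha^2\lambda/(r^5 d^{3/2}))$ after multiplying by $\eta$. Both are smaller than the main term by a factor of order $1/d$ (for $r \geq 1$), hence negligible.

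The main obstacle is the pure remainder $\E[g_{12}g_{23}g_{31}]$: the naive Cauchy--Schwarz bound using $\E[g^2] = O(\alpha^2/r^4)$ from Lemma~\ref{lm:linear_cct} only yields $O(\alpha^3/r^6)$, whose ratio $\sqrt{d}/r^3$ to the main term does not vanish in the regime $d \gg r^6$. The plan to gain the missing factor of $d$ is to condition on $(\x_1,\x_3)$ and apply Mehler's bivariate Hermite expansion for the Gaussian integration over $\x_2$: $\E_{\x_2}[g(u_{12})g(u_{23})\mid\x_1,\x_3] = \sum_{k\geq 0} k!\,\rho^k\,\tilde g_k(\norm{\x_1})\,\tilde g_k(\norm{\x_3})$, where $\rho = u_{13}/(\norm{\x_1}\norm{\x_3})$ is of order $1/\sqrt{d}$ with high probability by Lemma~\ref{lm:sphere_inner_cct}. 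The zeroth Hermite coefficient $\tilde g_0(s) = \phi(s) := \E_Z[g(sZ)]$ has $\E[\phi(\norm{\x})] = 0$ and $\Var[\phi(\norm{\x})] = O(\alpha^2/(r^4 d))$ by Gaussian Poincar\'e, so $\norm{\phi(\norm{\x_1})\phi(\norm{\x_3})}_2 = O(\alpha^2/(r^4 d))$, and Cauchy--Schwarz against $g(u_{13})$ produces a contribution $O(\alpha/r^2) \cdot O(\alpha^2/(r^4 d)) = O(\alpha^3/(r^6 d))$. The $k \geq 1$ terms carry additional factors $\rho^k = O(d^{-k/2})$ while the Hermite coefficients $\tilde g_k$ are bounded uniformly via Lemma~\ref{lm:linear_cct}'s $L^q$-estimates, yielding contributions of the same order after summation. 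Hence $\abs{\E[g_{12}g_{23}g_{31}]} = O(\alpha^3/(r^6 d))$, whose ratio $O(\alpha^3/(r^3\lambda^3\sqrt{d}))$ to the main term tends to zero since $\sqrt{d} \gg r^3$ in the regime $d \gg r^6$. Combining these estimates with $\lambda \geq C_p$ from Lemma~\ref{lm:lambda_bounded} yields $\E[\tau] \geq \lambda^3/(4r^3\sqrt{d})$ for all $d$ exceeding a constant $C_\alpha$.
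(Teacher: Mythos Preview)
Your approach is viable but takes a genuinely different route from the paper, and the hardest step is only lightly sketched.

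\textbf{Comparison with the paper.} The paper does \emph{not} push the Taylor decomposition $\brsigma = \eta u + g$ through in this regime. Instead it works at the density level: conditioning on $\x,\y$, it lower-bounds the bivariate Gaussian density of $(\inner{\x}{\z},\inner{\y}{\z})$ via $e^{x}\ge 1+x$, which produces a decomposition $\P(\Delta)\ge T_1+T_2$. Here $T_1$ is a ``near-independent'' piece shown to satisfy $T_1\ge p^3 - C_\alpha(\log d)/d$, and $T_2$ contains a factor $\inner{\x}{\y}\sigma(\inner{\x}{\y})$ whose Stein-evaluation furnishes the leading $\lambda^3/(r^3\sqrt d)$. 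This avoids ever facing $\E[g_{12}g_{23}g_{31}]$ directly. Your route is more unified with the large-$r$ argument of Section~4.1, which is appealing; the paper's route is more self-contained and sidesteps the Hermite machinery entirely.

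\textbf{Where your sketch needs real work.} The reduction $\P(\Delta)-p^3=\E[\tau]+3p\,\E[\gamma]\ge\E[\tau]$ is correct, and your bounds on the $\eta^2$- and $\eta$-cross terms are essentially right (the identity $\E[u_{12}g_{23}g_{31}]=\E[\|\x_3\|^2(h(\|\x_3\|)-\eta)^2]$ gives $O(\alpha^2 t^2/(r^4 d))$ after Lemma~\ref{lm:Ez_tail}). The Mehler step for $\E[g_{12}g_{23}g_{31}]$, however, cannot be dispatched by the single sentence you wrote. The claim ``the Hermite coefficients $\tilde g_k$ are bounded uniformly via Lemma~\ref{lm:linear_cct}'s $L^q$-estimates'' is not what actually works: the $k=0,1,2$ terms each need a \emph{different} tool (Poincar\'e for $\tilde g_0$, Lemma~\ref{lm:Ez_tail} for $\tilde g_1(\|\x\|)=\|\x\|(h(\|\x\|)-\eta)$, and the pointwise bound $|\sigma''|\le\alpha/(r^2d)$ for $\tilde g_2(s)=s^2\E_Z[\sigma''(sZ)]$), while the tail $k\ge 3$ uses Parseval together with $|\rho|^3=O(t^3/d^{3/2})$ on a high-probability event. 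Carrying this out produces $|\E[g_{12}g_{23}g_{31}]|=O(\alpha^3\,\mathrm{polylog}(d)/(r^6 d))$ rather than the clean $O(\alpha^3/(r^6d))$ you state, and you must also control the complement event where $\rho$ or $\|\x_i\|$ is atypical. None of this is fatal, but it is the heart of the proof in this regime and deserves to be written out.
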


Before proceeding to the proof of Lemma~\ref{lm:prob_tri},
we first show several technical results regarding the normal distribution.

Consider $\x, \y, \z \in \RR^d$.
Suppose $\z \sim \cN(\0, \I_d)$.
Conditioned on $\x, \y$,
the inner products $\inner{\x}{\z}$ and $\inner{\y}{\z}$
jointly have a bivariate normal distribution with mean $\0$ and covariant matrix
\begin{equation*}
\bSigma = \begin{bmatrix}
\norm{\x}^2 & \inner{\x}{\y}\\
\inner{\x}{\y} & \norm{\y}^2
\end{bmatrix}.
\end{equation*}Let
\begin{equation*}
\rho \coloneqq \frac{\inner{\x}{\y}}{\norm{\x}\norm{\y}}
\end{equation*}
be the correlation.
Then, the joint density can be written as
\begin{equation}
g(a, b) = \frac{1}{2\pi\norm{\x}\norm{\y}\sqrt{1-\rho^2}}
\exp\biggl(-\frac{1}{2(1-\rho^2)}\biggl(\frac{a^2}{\norm{\x}^2}
+ \frac{b^2}{\norm{\y}^2} - 2\rho\frac{ab}{\norm{\x}\norm{\y}}\biggr)\biggr).
\end{equation}
By the elementary inequality $\exp(x) \ge 1 + x$,
\begin{equation*}
\begin{split}
g(a, b) &\ge \frac{1}{2\pi\norm{\x}\norm{\y}\sqrt{1-\rho^2}}
\exp\biggl(-\frac{1}{2(1-\rho^2)}\biggl(\frac{a^2}{\norm{\x}^2}
+ \frac{b^2}{\norm{\y}^2} \biggr)\biggr)\biggl(1 + \frac{\rho}{1-\rho^2}
\frac{ab}{\norm{\x}\norm{\y}}\biggr)\\
&= \frac{1}{\norm{\x}\norm{\y}\sqrt{1-\rho^2}}
\varphi\biggl(\frac{a}{\norm{\x}\sqrt{1-\rho^2}}\biggr)
\varphi\biggl(\frac{b}{\norm{\y}\sqrt{1-\rho^2}}\biggr)\\
&\phantom{{}={}} +\frac{\rho}{\norm{\x}^2\norm{\y}^2(1-\rho^2)^{3/2}}ab
\varphi\biggl(\frac{a}{\norm{\x}\sqrt{1-\rho^2}}\biggr)
\varphi\biggl(\frac{b}{\norm{\y}\sqrt{1-\rho^2}}\biggr),
\end{split}
\end{equation*}
where $\varphi$ is the probability density function of the standard normal
distribution.

Consider a differentiable function $\sigma$.
By a change of variables $a' = a/(\norm{\x}\sqrt{1-\rho^2})$,
\begin{equation*}
\begin{split}
\int \sigma(a)
\varphi\biggl(\frac{a}{\norm{\x}\sqrt{1-\rho^2}}\biggr)\,da
&= \norm{\x}\sqrt{1-\rho^2}
\int \sigma(a'\norm{\x}\sqrt{1-\rho^2})
\varphi(a')\,da'\\
&= \norm{\x}\sqrt{1-\rho^2}
\E_{a'\sim\cN(0,1)}[\sigma(a'\norm{\x}\sqrt{1-\rho^2})]\\
&= \norm{\x}\sqrt{1-\rho^2}
\E_{\z\sim\cN(\0,\I_d)}[\sigma(\inner{\x}{\z}\sqrt{1-\rho^2})].
\end{split}
\end{equation*}
Similarly,
\begin{equation*}
\begin{split}
\int \sigma(a) a \varphi\biggl(\frac{a}{\norm{\x}\sqrt{1-\rho^2}}\biggr)\,da
&= \norm{\x}^2(1-\rho^2)
\int \sigma(a'\norm{\x}\sqrt{1-\rho^2}) a' \varphi(a')\,da'\\
&= \norm{\x}^2(1-\rho^2)
\E_{a'\sim\cN(0,1)}[\sigma(a'\norm{\x}\sqrt{1-\rho^2}) a'].
\end{split}
\end{equation*}
By Stein's lemma,
\begin{equation*}
\begin{split}
\E_{a'\sim\cN(0,1)}[\sigma(a'\norm{\x}\sqrt{1-\rho^2}) a']
&= \E_{a'\sim\cN(0,1)}
\biggl[\frac{\partial}{\partial a'}\sigma(a'\norm{\x}\sqrt{1-\rho^2})
\biggr]\\
&= \norm{\x}\sqrt{1-\rho^2} \E_{a'\sim\cN(0,1)}[
\sigma'(a'\norm{\x}\sqrt{1-\rho^2})]\\
&= \norm{\x}\sqrt{1-\rho^2} \E_{\z\sim\cN(\0,\I_d)}[
\sigma'(\inner{\x}{\z}\sqrt{1-\rho^2})].
\end{split}
\end{equation*}

Putting them together, we have that
\begin{equation} \label{eq:E_z_lower}
\begin{split}
\E_{\z}[\sigma(\inner{\x}{\z})\sigma(\inner{\y}{\z})]
&\ge \sqrt{1-\rho^2} \E_{\z}[\sigma(\inner{\x}{\z}\sqrt{1-\rho^2})]
\E_{\z'}[\sigma(\inner{\y}{\z'}\sqrt{1-\rho^2})]\\
&\phantom{{}\ge{}}+ (1-\rho^2)^{3/2}\inner{\x}{\y}
\E_{\z}[\sigma'(\inner{\x}{\z}\sqrt{1-\rho^2})]
\E_{\z'}[\sigma'(\inner{\y}{\z'}\sqrt{1-\rho^2})]].
\end{split}
\end{equation}
Using \eqref{eq:E_z_lower}, we have that
\begin{equation*}
\begin{split}
\P(\Delta)
&=\E[\sigma(\inner{\x}{\y})\sigma(\inner{\x}{\z})\sigma(\inner{\y}{\z})]
= \E[\sigma(\inner{\x}{\y})
\E_{\z}[\sigma(\inner{\x}{\z})\sigma(\inner{\y}{\z})]]\\
&\ge \underbrace{\E[\sqrt{1-\rho^2}\sigma(\inner{\x}{\y})
\E_{\z}[\sigma(\inner{\x}{\z}\sqrt{1-\rho^2})]
\E_{\z'}[\sigma(\inner{\y}{\z'}\sqrt{1-\rho^2})]}_{T_1}\\
&\phantom{{}\ge{}}+ \underbrace{\E
[(1-\rho^2)^{3/2}\inner{\x}{\y}\sigma(\inner{\x}{\y})
\E_{\z}[\sigma'(\inner{\x}{\z}\sqrt{1-\rho^2})]
\E_{\z'}[\sigma'(\inner{\y}{\z'}\sqrt{1-\rho^2})]]}_{T_2}.
\end{split}
\end{equation*}

The rest of the section is devoted to bounding $T_1$ and $T_2$
from below.

By Lemma~\ref{lm:sphere_inner_cct}, we have that
\begin{equation} \label{eq:rho_tail}
\P\biggl(\rho^2 \ge \frac{t^2}{d}\biggr)
\le 2\exp\biggl(-\frac{t^2}{4}\biggr).
\end{equation}
The following lemma gives a lower bound of $T_1$.
\begin{lemma} \label{lm:T_1}
For a constant $C_\alpha > 0$, when $d \ge C_\alpha$,
\begin{equation*}
T_1 \ge p^3 - \frac{C_\alpha \log d}{d}.
\end{equation*}
\end{lemma}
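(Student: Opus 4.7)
The plan is to approximate $T_1$ by the cleaner quantity $\wt T_1 \coloneqq \E[\sigma(\inner{\x}{\y}) \eta(\x) \eta(\y)]$, where $\eta(\x) = \E_\z[\sigma(\inner{\x}{\z})]$ is the function introduced in Section~\ref{se:imp}, and to show separately that $|T_1 - \wt T_1| = O(1/d)$ and $\wt T_1 \geq p^3 - O(1/d)$. This actually delivers a bound stronger than the claim by a logarithmic factor.

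For the first step, I would write $h(\x, \rho) \coloneqq \E_\z[\sigma(\inner{\x}{\z}\sqrt{1-\rho^2})]$, so that $T_1 = \E[\sqrt{1-\rho^2}\, \sigma(\inner{\x}{\y})\, h(\x,\rho)\, h(\y,\rho)]$, and telescope the integrand via
\[
\sqrt{1-\rho^2}\, h(\x,\rho)\, h(\y,\rho) - \eta(\x)\eta(\y) = (\sqrt{1-\rho^2}-1)\, h(\x,\rho)\, h(\y,\rho) + (h(\x,\rho) - \eta(\x))\, h(\y,\rho) + \eta(\x)\,(h(\y,\rho) - \eta(\y)).
\]
Since $\sigma, h, \eta \in [0,1]$ and $1 - \sqrt{1-\rho^2} \le \rho^2$, the mean value theorem together with $\sup_x |\sigma'(x)| \le 2\sqrt{\alpha}/(r\sqrt{d})$ (from~\eqref{eq:f_bound}) yields $|h(\x,\rho) - \eta(\x)| \le \tfrac{2\sqrt{\alpha}}{r\sqrt{d}}\, \rho^2\, \E_\z[|\inner{\x}{\z}|] \le \tfrac{2\sqrt{\alpha}}{r\sqrt{d}}\, \rho^2 \norm{\x}$. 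Conditioning on $\x$ and invoking rotation invariance of $\y/\norm{\y}$ on $\SS^{d-1}$ gives the exact identity $\E[\rho^2 \mid \x] = 1/d$, hence $\E[\rho^2] = 1/d$ and $\E[\rho^2 \norm{\x}] \le 1/\sqrt{d}$. Assembling the three pieces yields $|T_1 - \wt T_1| \le 1/d + 4\sqrt{\alpha}/(rd) \le C_\alpha/d$, using $r \ge 1$.

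For the second step, I would expand $\eta(\x)\eta(\y) = p^2 + p(\eta(\x)-p) + p(\eta(\y)-p) + (\eta(\x)-p)(\eta(\y)-p)$ inside $\wt T_1$. The key algebraic observation is that since $\E_\y[\sigma(\inner{\x}{\y}) \mid \x] = \eta(\x)$, the first-order cross term equals
\[
\E[\sigma(\inner{\x}{\y})(\eta(\x)-p)] = \E[\eta(\x)^2] - p^2 = \Var[\eta(\x)] \ge 0,
\]
which contributes nonnegatively rather than costing us anything. The remaining second-order cross term is controlled by Cauchy--Schwarz and~\eqref{eq:eta_var}, yielding $|\E[\sigma(\inner{\x}{\y})(\eta(\x)-p)(\eta(\y)-p)]| \le \Var[\eta(\x)] \le \alpha^2/(r^4 d)$. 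Combining, $\wt T_1 \ge p^3 - \alpha^2/d$, and adding the step-one error gives $T_1 \ge p^3 - C'_\alpha/d$, which is even stronger than the stated bound.

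The main obstacle is extracting the correct order in the second step: a blind application of Cauchy--Schwarz to $\E[\sigma(\inner{\x}{\y})(\eta(\x)-p)]$ would yield only $\sqrt{\Var[\eta]} \sim 1/(r^2 \sqrt{d})$, which is inadequate since $r$ may be constant in this regime. The trick of collapsing this first-order cross term into a variance by integrating out $\y$ first is what delivers the $1/d$ rate. Correspondingly, in the first step it is essential to carry the full $\rho^2$ factor (rather than $|\rho|$) through the mean value theorem, so that the sphere-concentration scale $\rho^2 \sim 1/d$ propagates cleanly into the final estimate.
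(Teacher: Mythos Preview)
Your proof is correct and takes a cleaner route than the paper's. The paper truncates on the event $\{\rho^2 \le t^2/d\}$, bounds the integrand on that event, and then optimizes $t \asymp \sqrt{\log d}$ to balance the on-event and off-event errors, which is what generates the $\log d$ factor; you avoid truncation entirely by feeding the exact moment identity $\E[\rho^2 \mid \x] = 1/d$ straight into the telescoped error, obtaining $|T_1 - \wt T_1| = O(1/d)$ directly. In the second step both arguments land on the same quantity $\wt T_1 = \E[\sigma(\inner{\x}{\y})\eta(\x)\eta(\y)]$, but the paper rewrites it via Fubini as $\E[\xi(\y,\z)\eta(\y)]$ and controls the deviation from $p^3$ through $\sqrt{\Var[\xi]}$, whereas your expansion of $\eta(\x)\eta(\y)$ around $p^2$ together with the key identity $\E[\sigma(\inner{\x}{\y})(\eta(\x)-p)] = \Var[\eta] \ge 0$ lets you discard the first-order cross terms for free and pay only the second-order cost $\Var[\eta] = O(1/(r^4 d))$. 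The net effect is that your argument yields the strictly stronger conclusion $T_1 \ge p^3 - C_\alpha/d$, with no logarithmic loss.
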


\begin{proof}
Denote the event
\begin{equation}
A \coloneqq \biggl\lbrace \rho^2 \ge \frac{t^2}{d} \biggr\rbrace.
\end{equation}
Since $\sigma$ is nonnegative,
\begin{equation*}
\begin{split}
T_1 &\ge \E[\sqrt{1-\rho^2}\sigma(\inner{\x}{\y})
\E_{\z}[\sigma(\inner{\x}{\z}\sqrt{1-\rho^2})]
\E_{\z'}[\sigma(\inner{\y}{\z'}\sqrt{1-\rho^2})]
\bb1_A].
\end{split}
\end{equation*}
Since $\sup_x \abs{f(x)} \le 2\sqrt{\alpha}$ by \eqref{eq:f_bound},
$\sigma$ is $\frac{2\sqrt{\alpha}}{r\sqrt{d}}$-Lipschitz.
Therefore,
\begin{equation*}
\abs{\sigma(\inner{\x}{\z}\sqrt{1-\rho^2}) - \sigma(\inner{\x}{\z})}
\le \frac{2\sqrt{\alpha}}{r\sqrt{d}}(1-\sqrt{1-\rho^2})\abs{\inner{\x}{\z}}
\le \frac{2\sqrt{\alpha}}{r\sqrt{d}}\rho^2\abs{\inner{\x}{\z}},
\end{equation*}
where the last inequality is due to
\begin{equation*}
1-\sqrt{1-\rho^2} = \frac{\rho^2}{1+\sqrt{1-\rho^2}} \le \rho^2.
\end{equation*}
Conditioned on $\x$, the inner product $\inner{\x}{\z}$ is distributed as
$\cN(0, \norm{\x}^2)$.
Then, by the property of a folded normal distribution,
$\E_{\z}[\abs{\inner{\x}{\z}}] = \norm{\x}\sqrt{2/\pi} \le \norm{\x}$.
Taking the expectation with respect to $\z$, and by Jensen's inequality,
we have that
\begin{equation*}
\begin{split}
\abs{\E_{\z}[\sigma(\inner{\x}{\z}\sqrt{1-\rho^2})]
-\E_{\z}[\sigma(\inner{\x}{\z})]}
&\le \E_{\z}[
\abs{\sigma(\inner{\x}{\z}\sqrt{1-\rho^2}) - \sigma(\inner{\x}{\z})}]\\
&\le \frac{2\sqrt{\alpha}}{r\sqrt{d}}\rho^2\E_{\z}[\abs{\inner{\x}{\z}}]
\le \frac{2\rho^2\sqrt{\alpha}}{r\sqrt{d}}\norm{\x}.
\end{split}
\end{equation*}
The same bound holds if we replace $\x$ with $\y$.
Hence, by combining them, we have that
\begin{equation*}
\abs{(\E_{\z}[\sigma(\inner{\x}{\z}\sqrt{1-\rho^2})]
-\E_{\z}[\sigma(\inner{\x}{\z})])
(\E_{\z'}[\sigma(\inner{\y}{\z'}\sqrt{1-\rho^2})]
-\E_{\z'}[\sigma(\inner{\y}{\z'})])}
\le \frac{4\rho^4\alpha}{r^2d}\norm{\x}\norm{\y}.
\end{equation*}
Expanding the product and using the triangle inequality, we obtain that
\begin{equation*}
\begin{split}
&\abs{\E_{\z}[\sigma(\inner{\x}{\z}\sqrt{1-\rho^2})]
\E_{\z'}[\sigma(\inner{\y}{\z'}\sqrt{1-\rho^2})]
- \E_{\z}[\sigma(\inner{\x}{\z})])\E_{\z'}[\sigma(\inner{\y}{\z'})])}\\
&\qquad\le \frac{4\rho^4\alpha}{r^2d}\norm{\x}\norm{\y}
+ \E_{\z}[\sigma(\inner{\x}{\z})]
\abs{(\E_{\z'}[\sigma(\inner{\y}{\z'}\sqrt{1-\rho^2})]
-\E_{\z'}[\sigma(\inner{\y}{\z'})])}\\
&\qquad\phantom{{}\le{}}+ \E_{\z'}[\sigma(\inner{\y}{\z'})]
\abs{(\E_{\z}[\sigma(\inner{\x}{\z}\sqrt{1-\rho^2})]
-\E_{\z}[\sigma(\inner{\x}{\z})])}\\
&\qquad\le \frac{4\rho^4\alpha}{r^2d}\norm{\x}\norm{\y}
+ \frac{2\rho^2\sqrt{\alpha}}{r\sqrt{d}}(\norm{\x}+\norm{\y}).\\
\end{split}
\end{equation*}
Therefore, by putting them together, we have that
\begin{equation*}
\begin{split}
T_1
&\ge \E[\sqrt{1-\rho^2}\sigma(\inner{\x}{\y})
\E_{\z}[\sigma(\inner{\x}{\z}\sqrt{1-\rho^2})]
\E_{\z'}[\sigma(\inner{\y}{\z'}\sqrt{1-\rho^2})]\bb1_A]\\
&\ge \biggl(1-\frac{t^2}{d}\biggr)\E[\sigma(\inner{\x}{\y})
\E_{\z}[\sigma(\inner{\x}{\z})]\E_{\z'}[\sigma(\inner{\y}{\z'})]\bb1_A]\\
&\phantom{{}\ge{}}- \frac{4\alpha t^4}{r^2d^3}
\E[\norm{\x}\norm{\y}\sigma(\inner{\x}{\y})]
- \frac{2t^2\sqrt{\alpha}}{rd^{3/2}}(\E[\norm{\x}\sigma(\inner{\x}{\y})]
+\E[\norm{\y}\sigma(\inner{\x}{\y})])\\
&\ge \E[\sigma(\inner{\x}{\y})
\E_{\z}[\sigma(\inner{\x}{\z})]\E_{\z'}[\sigma(\inner{\y}{\z'})]\bb1_A]
- \frac{t^2}{d}\E[\sigma(\inner{\x}{\y})
\E_{\z}[\sigma(\inner{\x}{\z})]\E_{\z'}[\sigma(\inner{\y}{\z'})]]\\
&\phantom{{}\ge{}}- \frac{4\alpha t^4}{r^2d^3}
\E[\norm{\x}]\E[\norm{\y}]
- \frac{2t^2\sqrt{\alpha}}{rd^{3/2}}(\E[\norm{\x}]+\E[\norm{\y}])\\
&\ge \E[\sigma(\inner{\x}{\y})
\E_{\z}[\sigma(\inner{\x}{\z})]\E_{\z'}[\sigma(\inner{\y}{\z'})]]
- \P(A^C) - \frac{t^2}{d}
- \frac{4\alpha t^4}{r^2d^3}\E[\norm{\x}]\E[\norm{\y}]\\
&\phantom{{}\ge{}}
- \frac{2t^2\sqrt{\alpha}}{rd^{3/2}}(\E[\norm{\x}]+\E[\norm{\y}]).
\end{split}
\end{equation*}
By \eqref{eq:rho_tail} and
$\E[\norm{\x}] \le \sqrt{\E[\norm{\x}^2]} \le \sqrt{d}$,
\begin{equation*}
T_1 \ge \E[\sigma(\inner{\x}{\y})
\E_{\z}[\sigma(\inner{\x}{\z})]\E_{\z'}[\sigma(\inner{\y}{\z'})]]
-2\exp\biggl(-\frac{t^2}{4}\biggr) -\frac{t^2}{d}
-\frac{4t^2\sqrt{\alpha}}{r d}
-\frac{4\alpha t^4}{r^2 d^2}.
\end{equation*}

By Fubini's theorem, we can interchange the expectation over $\x$ and $\z$:
\begin{equation*}
\E[\sigma(\inner{\x}{\y})
\E_{\z}[\sigma(\inner{\x}{\z})]\E_{\z'}[\sigma(\inner{\y}{\z'})]]
= \E[\E_{\x}[\sigma(\inner{\x}{\y})\sigma(\inner{\x}{\z})]
\E_{\z'}[\sigma(\inner{\y}{\z'})]].
\end{equation*}

Recall the definitions of $\eta$ and $\xi$
in the proof of Lemma~\ref{lm:gamma_Var}.
By the triangle inequality, we have
\begin{equation*}
\begin{split}
&\abs{\E[\E_{\x}[\sigma(\inner{\x}{\y})\sigma(\inner{\x}{\z})]
\E_{\z'}[\sigma(\inner{\y}{\z'})]] - p^3}
= \abs{\E[\xi(\y,\z) \eta(\y)] - p^3}\\
&\quad= \abs{\E[(\xi(\y,\z) - \E[\xi(\y,\z)]) \eta(\y)]
+ p(\E[\xi(\y,\z)] - p^2)}\\
&\quad\le \abs{\E[(\xi(\y,\z) - \E[\xi(\y,\z)]) \eta(\y)]}
+ p\abs{\E[\xi(\y,\z)] - p^2}.
\end{split}
\end{equation*}
By Jensen's inequality, we have
\begin{equation*}
\begin{split}
\abs{\E[(\xi(\y,\z) - \E[\xi(\y,\z)]) \eta(\y)]}
&\le \E[\abs{(\xi(\y,\z) - \E[\xi(\y,\z)])}\eta(\y)]
\le \E[\abs{(\xi(\y,\z) - \E[\xi(\y,\z)])}]\\
&\le \sqrt{\Var[\xi(\y,\z)]},
\end{split}
\end{equation*}
where we also used $0 \le \eta(\y) \le 1$.

Similar to Lemma~\ref{lm:gamma_Var}, we also have
\begin{equation*}
\Var[\xi(\y,\z)] \le \frac{68\alpha^2}{r^4 d}.
\end{equation*}
By \eqref{eq:gamma_upper}, we have
\begin{equation*}
\abs{\E[\xi(\y,\z)] - p^2} \le \frac{\alpha^2}{r^4 d}.
\end{equation*}

Putting them together, we obtain that
\begin{equation*}
\E[\sigma(\inner{\x}{\y})
\E_{\z}[\sigma(\inner{\x}{\z})]\E_{\z'}[\sigma(\inner{\y}{\z'})]]
\ge p^3 - \frac{69\alpha^2}{r^4 d}.
\end{equation*}

Therefore,
\begin{equation*}
T_1 \ge p^3  - \frac{69\alpha^2}{r^4 d}
- 2\exp\biggl(-\frac{t^2}{4}\biggr) -\frac{t^2}{d}
- \frac{4t^2\sqrt{\alpha}}{r d}
- \frac{4\alpha t^4}{r^2 d^2}.
\end{equation*}
By taking $t = \sqrt{4\log d}$, we conclude that
\begin{equation*}
T_1 \ge p^3 - \frac{69\alpha^2}{r^4 d}
- \frac{2(2\log d+1)}{d}
- \frac{16\sqrt{\alpha}\log d}{rd}
- \frac{64\alpha \log^2 d}{r^2d^2}.
\end{equation*}
The claim directly follows.
\end{proof}

The following lemma gives a lower bound of $T_2$.
\begin{lemma} \label{lm:T_2}
There exists a constant $C_\alpha$ such that
when  $d \ge r^4$ and $d \ge C_\alpha$,
\begin{equation*}
T_2 \ge \frac{\lambda^3}{2r^3\sqrt{d}}.
\end{equation*}
\end{lemma}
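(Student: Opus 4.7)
The plan is to mirror the structure of the proof of Lemma~\ref{lm:T_1}: restrict to a high-probability event $B$ on which $\rho^2$ is small and $\norm{\x}, \norm{\y}$ concentrate near $\sqrt{d}$, replace each factor in the integrand of $T_2$ by its leading-order approximation, and verify that the remaining error terms are negligible compared to the target $\lambda^3/(2r^3\sqrt{d})$. On $B$ I expect both inner expectations $\E_\z[\sigma'(\inner{\x}{\z}\sqrt{1-\rho^2})]$ and $\E_{\z'}[\sigma'(\inner{\y}{\z'}\sqrt{1-\rho^2})]$ to be close to $\lambda/(r\sqrt{d})$, the prefactor $(1-\rho^2)^{3/2}$ to be close to $1$, and $\E[\inner{\x}{\y}\sigma(\inner{\x}{\y})]$ to be close to $\lambda\sqrt{d}/r$; multiplying the three estimates yields $\lambda^3/(r^3\sqrt{d})$, and the factor of $1/2$ in the stated bound is there precisely to absorb error terms.

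First I would take $B \coloneqq \{\rho^2 \le 4(\log d)/d\} \cap \{\norm{\x}^2, \norm{\y}^2 \in [d/2, 2d]\}$; by Lemma~\ref{lm:sphere_inner_cct} together with standard chi-squared concentration, $\P(B^C) = O(d^{-1})$. To approximate each $\E_\z[\sigma'(\inner{\x}{\z}\sqrt{1-\rho^2})]$ by $\lambda/(r\sqrt{d})$, I would handle two sources of discrepancy separately. The first is the rescaling by $\sqrt{1-\rho^2}$: since $\abs{\sigma''} \le \alpha/(r^2 d)$, this contributes an error of order $\alpha\rho^2\norm{\x}/(r^2 d)$ after integrating out $\z$, which on $B$ is $O(\alpha\sqrt{\log d}/(r^2 d))$. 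The second is the dependence on $\x$: the Gaussian Poincar\'e inequality applied to the function $\x \mapsto \E_\z[\sigma'(\inner{\x}{\z})]$, with the gradient computed via Stein's lemma exactly as was done for $\eta$ in the proof of Lemma~\ref{lm:gamma_E}, yields variance $O(\alpha^2/(r^4 d))$, so this function deviates from its mean $\lambda/(r\sqrt{d})$ by $O(\alpha/(r^2\sqrt{d}))$ in $L^2$. Since Lemma~\ref{lm:lambda_bounded} ensures $\lambda \ge C_p > 0$, both errors are small compared to $\lambda/(r\sqrt{d})$ once $d$ is large.

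Next I would compute $\E[\inner{\x}{\y}\sigma(\inner{\x}{\y})]$, which by Stein's lemma (as used in the estimate of $\abs{\E[\xi(\x,\y,\z)]}$ earlier in this section) equals $\E[\norm{\y}^2 \sigma'(\inner{\x}{\y})]$; replacing $\norm{\y}^2$ by $d$ yields the leading term $\lambda\sqrt{d}/r$, with a correction bounded by Cauchy--Schwarz using $\Var[\norm{\y}^2] = 2d$ and $\E[\sigma'(\inner{\x}{\y})^2] = O(\alpha/(r^2 d))$ (where we invoke the bound $\sup_x \abs{f(x)} \le 2\sqrt{\alpha}$ from~\eqref{eq:f_bound}). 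Assembling the three pieces and truncating to $B$ gives the claimed bound, with the $B^C$ contribution controlled by Cauchy--Schwarz and $\P(B^C) = O(d^{-1})$. The hard part will be the bookkeeping: several error terms with different powers of $r$, $d$, and $\log d$ appear, and each must be verified to be subleading relative to $\lambda^3/(r^3\sqrt{d})$. The hypothesis $d \ge r^4$ enters precisely here to ensure that errors carrying an extra $r^{-1}$ or $\rho^2$ factor are indeed dominated, while $d \ge C_\alpha$ absorbs constants depending on $\alpha$ and the $\log d$ factors.
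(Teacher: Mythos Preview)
Your overall strategy matches the paper's, but there is a genuine gap in the step where you control the fluctuation of $h(\x) \coloneqq \E_{\z}[\sigma'(\inner{\x}{\z})]$ around its mean $\lambda/(r\sqrt{d})$. You claim that ``the gradient computed via Stein's lemma exactly as was done for $\eta$'' gives $\Var[h(\x)] = O(\alpha^2/(r^4 d))$, hence $L^2$ deviation $O(\alpha/(r^2\sqrt{d}))$, and that this is small compared to $\lambda/(r\sqrt{d})$ once $d$ is large. But the ratio of these two quantities is $O(\alpha/(\lambda r))$, which does \emph{not} depend on $d$; in the regime $d \ge r^4$ with $r$ bounded it is simply a constant, so the error term you would get after multiplying by $\inner{\x}{\y}\sigma(\inner{\x}{\y})$ (which has $L^2$ norm $\sqrt{d}$) and by the other factor $h_2(\y) = O(\sqrt{\alpha}/(r\sqrt{d}))$ is of the same order $C/(r^3\sqrt{d})$ as the main term, not smaller.

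The computation ``as for $\eta$'' does not transfer cleanly either: for $\eta$ one has $\partial_{x_i}\eta = x_i\,\E_{\z}[\sigma'']$ and bounds $\abs{\sigma''} \le \alpha/(r^2 d)$, whereas for $h$ one gets $\partial_{x_i} h = x_i\,\E_{\z}[\sigma''']$, and there is no assumed uniform bound on $f''$. The paper's key extra idea (its Lemma~\ref{lm:Ez_tail}) is to apply Stein's lemma a second time, now to the Gaussian $Y = (\inner{\x}{\z}-\mu_{p,d,r})/(r\sqrt{d})$ conditioned on $\x$, obtaining $\abs{\E_{\z}[f''(Y)]} \le \alpha r\sqrt{d}/\norm{\x}$ and hence $\norm{\nabla h(\x)}^2 \le \alpha^2/(r^4 d^2)$ \emph{uniformly}. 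This yields sub-Gaussian concentration of $h(\x)$ at scale $\alpha/(r^2 d)$, a full factor of $\sqrt{d}$ sharper than your claimed $L^2$ bound. The paper then includes the event $A_3 = \{\abs{h(\x) - \lambda/(r\sqrt{d})} \le \alpha t/(r^2 d)\}$ in its good event, giving pointwise control that can safely be multiplied against $\inner{\x}{\y}$. Without this sharper scale the bookkeeping you anticipate cannot close, regardless of how large $d$ is.
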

Before proving Lemma~\ref{lm:T_2}, we show the following simple fact.
\begin{lemma} \label{lm:lip_expect}
Let $f$ be an $L$-Lipschitz function.
Then, the function $g: \RR^d \to \RR$ given by
\begin{equation}
g(\x) = \E_{\z \sim \cN(\0, \I_d)}[f(\inner{\x}{\z})]
\end{equation}
is $\sqrt{\frac{2}{\pi}}L$-Lipschitz, that is,
\begin{equation*}
\abs{g(\x) - g(\y)} \le \sqrt{\frac{2}{\pi}}L \norm{\x-\y}.
\end{equation*}
\end{lemma}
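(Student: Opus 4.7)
The plan is to reduce the Lipschitz estimate to the expected absolute value of a one-dimensional Gaussian, using linearity of the inner product. Concretely, I would write
\begin{equation*}
\abs{g(\x) - g(\y)}
= \abs{\E_{\z \sim \cN(\0,\I_d)}[f(\inner{\x}{\z}) - f(\inner{\y}{\z})]}
\le \E_{\z \sim \cN(\0,\I_d)}[\abs{f(\inner{\x}{\z}) - f(\inner{\y}{\z})}]
\end{equation*}
by Jensen's inequality, and then apply the $L$-Lipschitz hypothesis together with linearity of the inner product:
\begin{equation*}
\abs{f(\inner{\x}{\z}) - f(\inner{\y}{\z})} \le L\,\abs{\inner{\x}{\z} - \inner{\y}{\z}} = L\,\abs{\inner{\x - \y}{\z}}.
\end{equation*}

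Next I would observe that, since $\z \sim \cN(\0, \I_d)$, the random variable $\inner{\x - \y}{\z}$ is distributed as $\cN(0, \norm{\x - \y}^2)$. The expected absolute value of a folded normal with this variance is exactly $\sqrt{2/\pi}\,\norm{\x - \y}$ --- the same fact already invoked in the proof of Lemma~\ref{lm:T_1}. Combining the three displays gives
\begin{equation*}
\abs{g(\x) - g(\y)} \le L\,\E_{\z}[\abs{\inner{\x - \y}{\z}}] = \sqrt{\tfrac{2}{\pi}}\,L\,\norm{\x - \y},
\end{equation*}
which is exactly the claim.

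There is really no obstacle here: the whole content of the lemma is that smoothing by a Gaussian inner product improves Lipschitz constants by the folded-normal factor $\sqrt{2/\pi}$, and the argument is a one-line application of Jensen plus linearity of $\inner{\cdot}{\z}$. The only thing to be slightly careful about is that the Lipschitz bound on $f$ be used pointwise in $\z$ \emph{before} taking the expectation, so that the constant $L$ comes out of the expectation and what remains is the expected absolute value of a single one-dimensional Gaussian with variance $\norm{\x - \y}^2$.
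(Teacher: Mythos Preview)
Your proposal is correct and follows essentially the same approach as the paper: Jensen's inequality, the pointwise Lipschitz bound, and the folded-normal mean of $\inner{\x-\y}{\z} \sim \cN(0,\norm{\x-\y}^2)$. The arguments are identical in structure and content.
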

\begin{proof}
An application of Jensen's inequality yields
\begin{equation*}
\abs{g(\x) - g(\y)}
= \abs{\E_{\z}[f(\inner{\x}{\z}) - f(\inner{\y}{\z})]}
\le \E_{\z}[\abs{f(\inner{\x}{\z}) - f(\inner{\y}{\z})}]
\le L \E_{\z}[\abs{\inner{\x-\y}{\z}}].
\end{equation*}
Conditioned on $\x$ and $\y$,
$\inner{\x-\y}{\z}$ is distributed as $\cN(0, \norm{\x-\y}^2)$.
Therefore, by the mean of a folded normal distribution,
\begin{equation*}
\abs{g(\x) - g(\y)} \le \sqrt{\frac{2}{\pi}} L \norm{\x-\y}. \qedhere
\end{equation*}
\end{proof}

By Lemma~\ref{lm:lip_expect}, since $\sigma'$ is
$\frac{\alpha}{r\sqrt{d}}$-Lipschitz,
\begin{equation} \label{eq:Ez_sigma_der_lip}
\begin{split}
\abs{\E_{\z}[\sigma'(\inner{\x}{\z}\sqrt{1-\rho^2})]
-\E_{\z}[\sigma'(\inner{\x}{\z})]}
&\le \sqrt{\frac{2}{\pi}}\cdot\frac{\alpha}{r\sqrt{d}} \norm{\x}
\abs{1 - \sqrt{1-\rho^2}}\\
&= \sqrt{\frac{2}{\pi}}\cdot\frac{\alpha}{r\sqrt{d}} \norm{\x}
\frac{\rho^2}{1+\sqrt{1-\rho^2}}
\le \frac{\alpha\rho^2}{r\sqrt{d}} \norm{\x}.
\end{split}
\end{equation}

We have the following lemma which states the sub-Gaussian tails of
$\E_{\z}[\sigma'(\inner{\x}{\z})]$.
\begin{lemma} \label{lm:Ez_tail}
The tails of $\E_{\z}[\sigma'(\inner{\x}{\z})]$ satisfy
\begin{equation*}
\P\biggl(\biggl\lvert\E_{\z}[\sigma'(\inner{\x}{\z})]
- \frac{\lambda}{r\sqrt{d}}\biggr\rvert
\ge \frac{\alpha t}{r^2d}
\biggr) \le 2\exp\biggl(-\frac{t^2}{2}\biggr).
\end{equation*}
\end{lemma}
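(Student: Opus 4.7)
The plan is to set $h(\x) \coloneqq \E_{\z \sim \cN(\0, \I_d)}[\sigma'(\inner{\x}{\z})]$ and reduce the claim to concentration of $h(\x)$ around its mean. First, I would identify the mean: interchanging expectations and using $\sigma'(t) = f((t-\mu_{p,d,r})/(r\sqrt{d}))/(r\sqrt{d})$ together with the definition of $\lambda$ in~\eqref{eq:lambda}, one sees that $\E_\x[h(\x)] = \lambda/(r\sqrt{d})$. So it suffices to prove $\P(\abs{h(\x) - \E h(\x)} \ge \alpha t/(r^{2}d)) \le 2\exp(-t^{2}/2)$.

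The main step is a sharp Lipschitz estimate for $h$. By assumption~\eqref{eq:A1} and the chain rule, $\sup_t\abs{\sigma''(t)} \le \alpha/(r^{2}d)$, so $\sigma'$ is $\alpha/(r^{2}d)$-Lipschitz. Applying Lemma~\ref{lm:lip_expect} with $f = \sigma'$ then yields that $h$ is $L$-Lipschitz on $\RR^{d}$ with $L = \sqrt{2/\pi}\cdot \alpha/(r^{2}d)$. The crucial point is to go through Lemma~\ref{lm:lip_expect}, which exploits the identity $\E_\z\abs{\inner{\x-\x'}{\z}} = \sqrt{2/\pi}\,\norm{\x-\x'}$; a naive coordinate-wise estimate on $\abs{\partial_{i}h} = \abs{\E_\z[z_i\sigma''(\inner{\x}{\z})]}$ via $\norm{\sigma''}_\infty \E\abs{z_i}$ would instead give only $\norm{\nabla h} \lesssim \alpha/(r^{2}\sqrt{d})$, losing a factor of $\sqrt{d}$ in the final exponent.

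With $L$ in hand, Gaussian concentration for Lipschitz functions of a standard Gaussian vector, $\P(\abs{h(\x) - \E h(\x)} \ge s) \le 2\exp(-s^{2}/(2L^{2}))$, closes the argument. This Borell-TIS type inequality is a well-known consequence of the Gaussian logarithmic Sobolev inequality and can be derived by the same Herbst-style argument as in the proof of Lemma~\ref{lm:cct_norm_func}, now with the constant bound $\norm{\nabla h(\x)} \le L$ in place of $\norm{\nabla f(\x)} \le \norm{\x}$, which yields sub-Gaussian rather than sub-exponential tails. Substituting $s = \alpha t/(r^{2}d)$ gives $s^{2}/(2L^{2}) = \pi t^{2}/4 \ge t^{2}/2$, producing the stated tail. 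The hard part is really the Lipschitz step; once it is secured, the concentration piece is essentially automatic.
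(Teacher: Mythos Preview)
Your proof is correct and in fact takes a cleaner route than the paper. Both arguments define $h(\x)=\E_\z[\sigma'(\inner{\x}{\z})]$, identify $\E[h(\x)]=\lambda/(r\sqrt d)$, and finish with the standard Gaussian concentration inequality for Lipschitz functions; the difference is in how the Lipschitz/gradient bound $\lesssim \alpha/(r^2d)$ is obtained. The paper differentiates $h$, applies Stein's lemma once to turn $\E_\z[z_i f'(\cdot)]$ into $x_i\,\E_\z[f''(\cdot)]/(r\sqrt d)$, and then applies Stein's lemma a second time (in the variable $Y=(\inner{\x}{\z}-\mu_{p,d,r})/(r\sqrt d)$) together with Cauchy--Schwarz to bound $|\E_Y[f''(Y)]|\le \alpha r\sqrt d/\|\x\|$, arriving at $\|\nabla h(\x)\|\le \alpha/(r^2 d)$. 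This route needs assumption~\eqref{eq:A2} so that $\E[f''(Y)]$ is finite. Your approach bypasses all of this: since $|\sigma''|\le \alpha/(r^2 d)$ by~\eqref{eq:A1} alone, $\sigma'$ is $\alpha/(r^2 d)$-Lipschitz, and Lemma~\ref{lm:lip_expect} immediately gives that $h$ is $\sqrt{2/\pi}\,\alpha/(r^2 d)$-Lipschitz. What your approach buys is a shorter argument that uses only~\eqref{eq:A1} and reuses Lemma~\ref{lm:lip_expect} rather than a bespoke gradient computation; what the paper's approach illustrates is the Stein-lemma machinery used elsewhere in the section, and it yields an explicit pointwise gradient bound (not just a Lipschitz constant), though that extra information is not needed here.
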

\begin{proof}

Recall the definition of $\lambda$ in \eqref{eq:lambda}:
\begin{equation*}
\lambda \coloneqq \E\biggl[
f\biggl(\frac{\inner{\x}{\y}-\mu_{p,d,r}}{r\sqrt{d}}\biggr)\biggr].
\end{equation*}
Consider the function
\begin{equation*}
h(\x) \coloneqq \E_{\z}[\sigma'(\inner{\x}{\z})].
\end{equation*}
We have $\E[h(\x)] = \lambda/(r\sqrt{d})$.

Taking the partial derivative of $h(\x)$ with respect to $x_i$, we have that
\begin{equation*}
\begin{split}
\frac{\partial h(\x)}{\partial x_i}
= \frac{1}{r^2d} \E_{\z}\biggl[z_i
f'\biggl(\frac{\inner{\x}{\z}-\mu_{p,d,r}}{r\sqrt{d}}\biggr)\biggr]
= \frac{x_i}{r^3d^{3/2}} \E_{\z}\biggl[
f''\biggl(\frac{\inner{\x}{\z}-\mu_{p,d,r}}{r\sqrt{d}}\biggr)\biggr].
\end{split}
\end{equation*}
Let
\begin{equation*}
Y \coloneqq \frac{\inner{\x}{\z}-\mu_{p,d,r}}{r\sqrt{d}}.
\end{equation*}
Then, if we fix $\x$,
$Y$ is distributed as $\cN(-\mu_{p,d,r}/(r\sqrt{d}), \norm{\x}^2/(r^2d))$.
Therefore, since we assume \eqref{eq:A2}, by Stein's lemma,
\begin{equation*}
\begin{split}
\biggl\lvert\E_{\z}\biggl[
f''\biggl(\frac{\inner{\x}{\z}-\mu_{p,d,r}}{r\sqrt{d}}\biggr)\biggr]
\biggr\rvert
&= \abs{\E_Y[f''(Y)]}
= \frac{1}{\Var[Y]}
\biggl\lvert\E\biggl[\biggl(Y+\frac{\mu_{p,d,r}}{r\sqrt{d}}\biggr)
f'(Y)\biggr]\biggr\rvert\\
&\le \frac{1}{\Var[Y]} \sqrt{\Var[Y]\E[f'(Y)^2]}
\le \frac{\alpha r\sqrt{d}}{\norm{\x}},
\end{split}
\end{equation*}
where the first inequality is by Cauchy--Schwarz.

Hence, we have that
\begin{equation*}
\norm{\nabla h(\x)}^2
= \sum_{i=1}^d \frac{x_i^2}{r^6d^{3}} \E_{\z}\biggl[
f''\biggl(\frac{\inner{\x}{\z}-\mu_{p,d,r}}{r\sqrt{d}}\biggr)\biggr]^2
\le \frac{\norm{\x}^2}{r^6d^{3}} \cdot \frac{\alpha^2r^2d}{\norm{\x}^2}
= \frac{\alpha^2}{r^4d^2}.
\end{equation*}
Therefore, by the classical Gaussian concentration inequality
(see, e.g., \cite[Theorem~5.5]{boucheron2013concentration}),
the claim directly follows.
\end{proof}

With Lemma~\ref{lm:lip_expect} and Lemma~\ref{lm:Ez_tail} in place,
we now turn to proving Lemma~\ref{lm:T_2}.
\begin{proof}[Proof of Lemma~\ref{lm:T_2}]
A standard $\chi^2$ concentration (see, e.g.,
\cite[Example~2.11]{wainwright2019high}) gives
\begin{equation*}
\P(\abs{\norm{\x}^2 - d} \ge 2t\sqrt{d}) \le 2\exp\biggl(-\frac{t^2}{8}\biggr).
\end{equation*}

Denote the events
\begin{align*}
A_0 &\coloneqq \biggl\lbrace \rho^2 \le \frac{t^2}{d} \biggr\rbrace\\
A_1 &\coloneqq \lbrace \abs{\norm{\x}^2 - d} \le 2t\sqrt{d} \rbrace,\\
A_2 &\coloneqq \lbrace \abs{\norm{\y}^2 - d} \le 2t\sqrt{d} \rbrace,\\
A_3 &\coloneqq \biggl\lbrace \biggl\lvert\E_{\z}[\sigma'(\inner{\x}{\z})]
- \frac{\lambda}{r\sqrt{d}}\biggr\rvert \le \frac{\alpha t}{r^2d}
\biggr\rbrace,
\end{align*}
and let $A \coloneqq A_0 \cap A_1 \cap A_2 \cap A_3$.
Then, on the event $A$,
by the triangle inequality and using \eqref{eq:Ez_sigma_der_lip},
\begin{equation*}
\begin{split}
&\biggl\lvert \E_{\z}[\sigma'(\inner{\x}{\z}\sqrt{1-\rho^2})]
- \frac{\lambda}{r\sqrt{d}}\biggr\rvert\\
&\qquad\le \biggl\lvert \E_{\z}[\sigma'(\inner{\x}{\z}\sqrt{1-\rho^2})]
- \E_{\z}[\sigma'(\inner{\x}{\z})]\biggr\rvert
+ \biggl\lvert \E_{\z}[\sigma'(\inner{\x}{\z})]
- \frac{\lambda}{r\sqrt{d}}\biggr\rvert\\
&\qquad\le \frac{\alpha\rho^2}{r\sqrt{d}} \norm{\x}
+ \frac{\alpha t}{r^2d}
\le \frac{\alpha t^2}{rd}
\sqrt{1+\frac{2t}{\sqrt{d}}} + \frac{\alpha t}{r^2d}
\le \frac{\alpha t^2}{rd}
\biggl(1+\frac{t}{\sqrt{d}}\biggr) + \frac{\alpha t}{r^2d}.
\end{split}
\end{equation*}
Hence for $1 \le t \le \sqrt{d}$,
\begin{equation*}
\biggl\lvert \E_{\z}[\sigma'(\inner{\x}{\z}\sqrt{1-\rho^2})]
- \frac{\lambda}{r\sqrt{d}}\biggr\rvert \le \frac{3\alpha t^2}{rd}.
\end{equation*}
Since the same bound also holds for $\y$, by combining them, we have
\begin{equation*}
\biggl\lvert \biggl(\E_{\z}[\sigma'(\inner{\x}{\z}\sqrt{1-\rho^2})]
- \frac{\lambda}{r\sqrt{d}}\biggr)
\biggl(\E_{\z'}[\sigma'(\inner{\y}{\z'}\sqrt{1-\rho^2})]
- \frac{\lambda}{r\sqrt{d}}\biggr)\biggr\rvert\\
\le \frac{9\alpha^2t^4}{r^2d^2}.
\end{equation*}
Expanding the product and by the triangle inequality, we get that
\begin{equation*}
\begin{split}
&\biggl\lvert \E_{\z}[\sigma'(\inner{\x}{\z}\sqrt{1-\rho^2})]
\E_{\z'}[\sigma'(\inner{\y}{\z'}\sqrt{1-\rho^2})]
- \frac{\lambda^2}{r^2d}\biggr\rvert\\
&\qquad\le \frac{9\alpha^2t^4}{r^2d^2}
+\frac{\lambda}{r\sqrt{d}}
\biggl\lvert \E_{\z}[\sigma'(\inner{\x}{\z}\sqrt{1-\rho^2})]
- \frac{\lambda}{r\sqrt{d}}\biggr\rvert
+\frac{\lambda}{r\sqrt{d}}
\biggl\lvert \E_{\z}[\sigma'(\inner{\x}{\z}\sqrt{1-\rho^2})]
- \frac{\lambda}{r\sqrt{d}}\biggr\rvert\\
&\qquad\le \frac{9\alpha^2t^4}{r^2d^2} +\frac{6\alpha\lambda t^2}{r^2d^{3/2}}.
\end{split}
\end{equation*}

For simplicity of notation, let
\begin{equation*}
 h(\x,\y) \coloneqq
(1-\rho^2)^{3/2}\E_{\z}[\sigma'(\inner{\x}{\z}\sqrt{1-\rho^2})]
\E_{\z'}[\sigma'(\inner{\y}{\z'}\sqrt{1-\rho^2})].
\end{equation*}
Then, on the event $A$, we have that
\begin{equation*}
\begin{split}
\biggl\lvert h(\x,\y) - \frac{\lambda^2}{r^2d}\biggr\rvert
&\le (1-\rho^2)^{3/2}\biggl\lvert
\E_{\z}[\sigma'(\inner{\x}{\z}\sqrt{1-\rho^2})]
\E_{\z'}[\sigma'(\inner{\y}{\z'}\sqrt{1-\rho^2})]
- \frac{\lambda^2}{r^2d}\biggr\rvert\\
&\phantom{{}\le{}}+ \frac{\lambda^2}{r^2d} \abs{(1-\rho^2)^{3/2} - 1}\\
&\le \frac{9\alpha^2t^4}{r^2d^2} +\frac{6\alpha\lambda t^2}{r^2d^{3/2}}
+ \frac{\lambda^2}{r^2d} \abs{(1-\rho^2)^{3/2} - 1}.
\end{split}
\end{equation*}
By the elementary inequality $(1-x)^a \ge 1-ax$ for $a \ge 1$,
\begin{equation*}
1 - (1-\rho^2)^{3/2} \le \frac{3\rho^2}{2}.
\end{equation*}
Therefore, additionally with $\lambda \le 2\sqrt{\alpha}$, on the event $A$,
we have that
\begin{equation*}
\biggl\lvert h(\x,\y) - \frac{\lambda^2}{r^2d}\biggr\rvert
\le \frac{9\alpha^2t^4}{r^2d^2} +\frac{6\alpha\lambda t^2}{r^2d^{3/2}}
+ \frac{3\lambda^2t^2}{2r^2d^2}
\le \frac{9\alpha^2t^4}{r^2d^2} +\frac{12\alpha^{3/2} t^2}{r^2d^{3/2}}
+ \frac{6\alpha t^2}{r^2d^2}.
\end{equation*}

By Jensen's inequality,
\begin{equation*}
\begin{split}
&\biggl\lvert \E[\inner{\x}{\y}\sigma(\inner{\x}{\y})h(\x,\y)\bb1_A]
- \frac{\lambda^2}{r^2d} \E[\inner{\x}{\y}\sigma(\inner{\x}{\y})\bb1_A]
\biggr\rvert\\
&\qquad\le \E\biggl[\biggl\lvert \inner{\x}{\y}\sigma(\inner{\x}{\y})
\biggl(h(\x,\y) - \frac{\lambda^2}{r^2d}\biggr) \biggr\rvert\bb1_A\biggr]
\le \E\biggl[\biggl\lvert h(\x,\y) - \frac{\lambda^2}{r^2d}\biggr\rvert
\cdot \abs{\inner{\x}{\y}\sigma(\inner{\x}{\y})}\bb1_A\biggr]\\
&\qquad\le \biggl(\frac{9\alpha^2t^4}{r^2d^2}
+\frac{12\alpha^{3/2} t^2}{r^2d^{3/2}}
+ \frac{6\alpha t^2}{r^2d^2}\biggr)
\E[\abs{\inner{\x}{\y}}].
\end{split}
\end{equation*}
By Jensen's inequality, we have
$\E[\abs{\inner{\x}{\y}}] \le \sqrt{\E[(\inner{\x}{\y})^2]} = \sqrt{d}$.
Hence, we obtain that
\begin{equation} \label{eq:E_xy_sig_h_A}
\biggl\lvert \E[\inner{\x}{\y}\sigma(\inner{\x}{\y})h(\x,\y)\bb1_A]
- \frac{\lambda^2}{r^2d} \E[\inner{\x}{\y}\sigma(\inner{\x}{\y})\bb1_A]
\biggr\rvert
\le \frac{9\alpha^2t^4}{r^2d^{3/2}} +\frac{12\alpha^{3/2} t^2}{r^2d}
+ \frac{6\alpha t^2}{r^2d^{3/2}}.
\end{equation}

Since conditioned on $\x$, $\inner{\x}{\y} \sim \cN(0, \norm{\x}^2)$,
by Stein's lemma,
\begin{equation*}
\begin{split}
\E_{\y}[\inner{\x}{\y}\sigma(\inner{\x}{\y})]
= \norm{\x}^2\E_{\y}[\sigma'(\inner{\x}{\y})].
\end{split}
\end{equation*}
Hence, we have that
\begin{equation*}
\begin{split}
\E[\inner{\x}{\y}\sigma(\inner{\x}{\y})]
&\ge \E[\norm{\x}^2\E_{\y}[\sigma'(\inner{\x}{\y})]\bb1_{A_1}]
\ge (d-2t\sqrt{d})\E[\sigma'(\inner{\x}{\y})\bb1_{A_1}]\\
&= (d-2t\sqrt{d})(\E[\sigma'(\inner{\x}{\y})]
- \E[\sigma'(\inner{\x}{\y})\bb1_{A_1^C}])\\
&\ge \frac{\lambda\sqrt{d}}{r} - \frac{2\lambda t}{r}
- \frac{2\sqrt{\alpha d}}{r}\P(A_1^C).
\end{split}
\end{equation*}
By Cauchy--Schwarz inequality,
\begin{equation*}
\abs{\E[\inner{\x}{\y}\sigma(\inner{\x}{\y})\bb1_{A^C}]}
\le \sqrt{\E[(\inner{\x}{\y})^2\sigma(\inner{\x}{\y})^2]\P(A^C)}
\le \sqrt{\E[(\inner{\x}{\y})^2]\P(A^C)}
= \sqrt{d\P(A^C)}.
\end{equation*}
Therefore, we have that
\begin{equation} \label{eq:E_xy_sig_A}
\begin{split}
\E[\inner{\x}{\y}\sigma(\inner{\x}{\y})\bb1_A]
&= \E[\inner{\x}{\y}\sigma(\inner{\x}{\y})]
- \E[\inner{\x}{\y}\sigma(\inner{\x}{\y})\bb1_{A^C}]\\
&\ge \E[\inner{\x}{\y}\sigma(\inner{\x}{\y})]
- \abs{\E[\inner{\x}{\y}\sigma(\inner{\x}{\y})\bb1_{A^C}]}\\
&\ge \frac{\lambda\sqrt{d}}{r} - \frac{2\lambda t}{r}
- \frac{2\sqrt{\alpha d}}{r}\P(A_1^C) - \sqrt{d\P(A^C)}.
\end{split}
\end{equation}

Similarly, by Cauchy--Schwarz inequality,
\begin{equation} \label{eq:E_xy_sig_h_AC}
\begin{split}
\abs{\E[\inner{\x}{\y}\sigma(\inner{\x}{\y})h(\x,\y)\bb1_{A^C}]}
&\le \sqrt{\E[(\inner{\x}{\y})^2\sigma(\inner{\x}{\y})^2h(\x,\y)^2]\P(A^C)}\\
&\le \frac{2\sqrt{\alpha}}{r\sqrt{d}}\sqrt{\E[(\inner{\x}{\y})^2]\P(A^C)}
= \frac{2\sqrt{\alpha}}{r}\sqrt{\P(A^C)}.
\end{split}
\end{equation}

By a union bound,
\begin{equation*}
\begin{split}
\P(A^C) &= \P((A_0 \cap A_1 \cap A_2 \cap A_3)^C)
\le \P(A_0^C)+\P(A_1^C)+\P(A_2^C)+\P(A_3^C)\\
&\le 2\biggl(\exp\biggl(-\frac{t^2}{4}\biggr)
+ \exp\biggl(-\frac{t^2}{8}\biggr)
+\exp\biggl(-\frac{t^2}{8}\biggr)+\exp\biggl(-\frac{t^2}{2}\biggr)\biggr).
\end{split}
\end{equation*}
Taking $t=2\sqrt{6\log d}$, we have that
\begin{equation*}
\P(A_1^C) \le \frac{2}{d^{3}}
\quad\text{and}\quad
\P(A^C) \le \frac{8}{d^{3}}.
\end{equation*}
Using the above display
and putting \eqref{eq:E_xy_sig_h_A}, \eqref{eq:E_xy_sig_A},
and \eqref{eq:E_xy_sig_h_AC} together,
we have that
\begin{equation*}
\begin{split}
T_2 &= \E[\inner{\x}{\y}\sigma(\inner{\x}{\y})h(\x,\y)]
\ge \E[\inner{\x}{\y}\sigma(\inner{\x}{\y})h(\x,\y)\bb1_A]
- \abs{\E[\inner{\x}{\y}\sigma(\inner{\x}{\y})h(\x,\y)\bb1_{A^C}]}\\
&\ge \frac{\lambda^3}{r^3\sqrt{d}}
- \frac{5184\alpha^2 \log^2d}{r^2d^{3/2}}
- \frac{288\alpha^{3/2} \log d}{r^2d}
- \frac{144\alpha \log d}{r^2d^{3/2}}\\
&\phantom{{}\ge{}}- \frac{32\alpha^{3/2}\sqrt{6\log d}}{r^3d}
- \frac{4\alpha^{3/2}}{r^3d^{7/2}}
- \frac{4\sqrt{2}\alpha}{r^2d^2}
- \frac{4\sqrt{2\alpha}}{rd^{3/2}},
\end{split}
\end{equation*}
where we also used $\lambda \le 2\sqrt{\alpha}$.

Therefore, when $d \ge r^4$, there exists a constant $C_{\alpha} > 0$
such that for $d \ge C_{\alpha}$,
\begin{equation*}
T_2 \ge \frac{\lambda^3}{2r^3\sqrt{d}}.
\end{equation*}
Lemma~\ref{lm:T_2} is hence proved.
\end{proof}

Combining Lemma~\ref{lm:T_1} and Lemma~\ref{lm:T_2},
we conclude that when $d/\log^2 d \gg r^{6}$
there exists a constant $C_\alpha > 0$ such that
for $d \ge C_\alpha$,
\begin{equation*}
\P(\Delta) \ge T_1 + T_2
\ge p^3 + \frac{\lambda^3}{2r^3\sqrt{d}} - \frac{C_\alpha \log d}{d}.
\end{equation*}
Lemma~\ref{lm:prob_tri} directly follows.

\begin{lemma}
Assume $d/\log^2 d \gg r^{6}$.
Then, there exist constants $C_p, C_{\alpha} \ge 0$,
for $d \ge C_{\alpha}$,
\begin{equation*}
\E[\tau_{\{1,2,3\}}] \ge \frac{C_p}{r^3\sqrt{d}}.
\end{equation*}
\end{lemma}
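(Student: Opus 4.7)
The plan is to expand $\tau_{\{1,2,3\}} = (a_{1,2}-p)(a_{2,3}-p)(a_{3,1}-p)$ algebraically and reduce the computation of its expectation to the quantities already controlled by Lemmas~\ref{lm:prob_cherry} and~\ref{lm:prob_tri}. Multiplying out the three binomials, taking expectations, and using $\E[a_{i,j}] = p$ together with the symmetric identifications $\E[a_{1,2}a_{2,3}a_{3,1}] = \P(\Delta)$ and $\E[a_{i,j}a_{j,k}] = \P(\Lambda)$ for each of the three cherries (rooted at vertices $1$, $2$, and $3$), I expect to arrive at
\begin{equation*}
\E[\tau_{\{1,2,3\}}] = \P(\Delta) - 3p\,\P(\Lambda) + 2p^3.
\end{equation*}

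Next, I would substitute the two preceding bounds: Lemma~\ref{lm:prob_tri} supplies $\P(\Delta) \ge p^3 + \lambda^3/(4r^3 \sqrt{d})$ in the regime $d/\log^{2} d \gg r^{6}$, while Lemma~\ref{lm:prob_cherry} supplies $\P(\Lambda) \le p^2 + \alpha^2/(r^4 d)$. The key observation is that the $+p^3$ contribution from $\P(\Delta)$, the $-3p \cdot p^2$ contribution from $\P(\Lambda)$, and the residual $+2p^3$ cancel exactly (which is precisely why the signed triangle is the right statistic to look at), leaving the clean bound
\begin{equation*}
\E[\tau_{\{1,2,3\}}] \ge \frac{\lambda^3}{4 r^3 \sqrt{d}} - \frac{3p\alpha^2}{r^4 d}
= \frac{1}{r^3\sqrt{d}}\biggl(\frac{\lambda^3}{4} - \frac{3p\alpha^2}{r\sqrt{d}}\biggr).
\end{equation*}

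To conclude, I would invoke Lemma~\ref{lm:lambda_bounded} to bound $\lambda \ge C_p' > 0$ uniformly in $r$ and $d$, and observe that the hypothesis $d/\log^{2} d \gg r^{6}$ together with $r \ge 1$ forces $r\sqrt{d} \to \infty$, so the subtracted term $3p\alpha^2/(r\sqrt{d})$ is eventually at most $(C_p')^3/8$. Setting $C_p \coloneqq (C_p')^3/8$ then yields $\E[\tau_{\{1,2,3\}}] \ge C_p/(r^3 \sqrt{d})$ for all sufficiently large $d$. I do not anticipate any real obstacle: all of the analytic work has already been carried out in the cherry and triangle lemmas, and this final step is essentially bookkeeping. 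The one mildly nontrivial point is the exact cancellation of the $p^3$ constants, which is the whole reason one prefers the signed triangle over the plain triangle count in this context.
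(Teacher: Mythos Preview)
Your proposal is correct and matches the paper's proof essentially line for line: the same algebraic expansion $\E[\tau_{\{1,2,3\}}] = \P(\Delta) - 3p\,\P(\Lambda) + 2p^{3}$, the same substitution of Lemmas~\ref{lm:prob_cherry} and~\ref{lm:prob_tri}, and the same appeal to Lemma~\ref{lm:lambda_bounded} to absorb the lower-order $3p\alpha^{2}/(r^{4}d)$ term. (Incidentally, your $+2p^{3}$ is the correct constant; the paper's displayed $+2p^{2}$ is a typo.)
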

\begin{proof}
The expected signed triangle can be written as
\begin{equation*}
\begin{split}
\E[\tau_{\{1,2,3\}}] &= \E[(a_{1,2}-p)(a_{2,3}-p)(a_{3,1}-p)]\\
&= \E[a_{1,2}a_{2,3}a_{3,1}]
- p(\E[a_{1,2}a_{2,3}]+\E[a_{1,2}a_{3,1}]+\E[a_{2,3}a_{3,1}])\\
&\phantom{{}={}}+ p^2(\E[a_{1,2}]+\E[a_{2,3}]+\E[a_{3,1}]) - p^3\\
&= \P(\Delta) - 3p\P(\Lambda) + 2p^2.
\end{split}
\end{equation*}

By Lemma~\ref{lm:prob_cherry} and Lemma~\ref{lm:prob_tri},
we have that for $d \ge C_{\alpha}$,
\begin{equation*}
\E[\tau_{\{1,2,3\}}] \ge \frac{\lambda^3}{4r^3\sqrt{d}}
- \frac{3p\alpha^2}{r^4d}.
\end{equation*}
Additionally with Lemma~\ref{lm:lambda_bounded},
the claim follows directly.
\end{proof}

The expected signed triangle statistic in $\cG(n,p,d,r)$ satisfies
\begin{equation*}
\E[\tau(\cG(n,p,d,r))]
\coloneqq \E\biggl[\sum_{\{i,j,k\} \in \binom{[n]}{3}} \tau_{\{i,j,k\}}\biggr]
= \sum_{\{i,j,k\} \in \binom{[n]}{3}} \E[\tau_{\{i,j,k\}}]
= \binom{n}{3} \E[\tau_{\{1,2,3\}}].
\end{equation*}

Putting them together, we have the following lemma.
\begin{lemma} \label{lm:tau_large_d}
When $d/\log^2 d \gg r^6$,
there exist constants $C_p, C_{\alpha} > 0$
such that for $d \ge C_{\alpha}$,
\begin{equation*}
\E[\tau(\cG(n,p,d,r))] \ge \frac{C_p n^3}{r^3\sqrt{d}}.
\end{equation*}
\end{lemma}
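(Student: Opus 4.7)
The plan is to combine the per-triple estimate with linearity of expectation; the substantive work has already been done in the preceding analysis of $\P(\Delta)$ and $\P(\Lambda)$, so this step is essentially bookkeeping. First I would write out
\[
\E[\tau(\cG(n,p,d,r))]
= \sum_{\{i,j,k\}\in\binom{[n]}{3}} \E[\tau_{\{i,j,k\}}]
= \binom{n}{3}\,\E[\tau_{\{1,2,3\}}],
\]
where the second equality uses the vertex-exchangeability of $\cG(n,p,d,r)$: since the latent vectors $\x_1,\ldots,\x_n$ are i.i.d.\ $\cN(\0,\I_d)$ and the conditional edge distribution depends only on pairs of latent positions, every unordered triple has the same expected signed-triangle contribution as $\{1,2,3\}$.

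Next I would invoke the immediately preceding lemma, which under the hypothesis $d/\log^2 d \gg r^6$ and for $d \ge C_\alpha$ gives $\E[\tau_{\{1,2,3\}}] \ge C_p/(r^3\sqrt{d})$. Substituting and using the elementary bound $\binom{n}{3} \ge n^3/12$ (valid, say, for $n \ge 3$, and trivially absorbable into the constant for smaller $n$) yields
\[
\E[\tau(\cG(n,p,d,r))] \;\ge\; \binom{n}{3}\cdot\frac{C_p}{r^3\sqrt{d}}
\;\ge\; \frac{C_p'\, n^3}{r^3\sqrt{d}},
\]
with $C_p' := C_p/12$, which is the desired conclusion after renaming the constant.

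There is no real obstacle in this step; all the difficulty sits upstream, in Lemma~\ref{lm:prob_tri} and its proof via the lower bounds on $T_1$ and $T_2$, together with Lemma~\ref{lm:prob_cherry} and Lemma~\ref{lm:lambda_bounded}. Those ingredients combine through the identity $\E[\tau_{\{1,2,3\}}] = \P(\Delta) - 3p\,\P(\Lambda) + 2p^2$ to produce the per-triple lower bound, after which the current lemma is a one-line consequence of linearity of expectation and symmetry.
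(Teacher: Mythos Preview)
Your proposal is correct and matches the paper's own argument essentially line for line: the paper also writes $\E[\tau(\cG(n,p,d,r))] = \binom{n}{3}\E[\tau_{\{1,2,3\}}]$ by linearity and exchangeability, and then invokes the preceding per-triple lower bound. You are slightly more explicit than the paper in handling the $\binom{n}{3}$ versus $n^3$ discrepancy via $\binom{n}{3}\ge n^3/12$, but otherwise there is no difference.
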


\subsection{Estimating the variance}
The variance of the signed triangle statistic in $\cG(n,p,d,r)$ can be written
as
\begin{equation}
\begin{split}
\Var[\tau(\cG(n,p,d,r))]
&= \sum_{\{i,j,k\}, \{i',j',k'\} \subset [n]} V_{\{i,j,k\},\{i',j',k'\}}\\
&= \binom{n}{3} V_{\{1,2,3\},\{1,2,3\}}
+ \binom{n}{4}\binom{4}{2} V_{\{1,2,3\},\{1,2,4\}}\\
&\phantom{{}={}}+ \binom{n}{5}\binom{5}{1}\binom{4}{2} V_{\{1,2,3\},\{1,4,5\}}
+ \binom{n}{6}\binom{6}{3} V_{\{1,2,3\},\{4,5,6\}},
\end{split}
\end{equation}
where $V_{\{i,j,k\},\{i',j',k'\}}$ is the covariance of two signed
triangles defined by
\begin{equation*}
V_{\{i,j,k\},\{i',j',k'\}}
\coloneqq \E_{\cG(n,p,d,r)}[\tau_{\{i,j,k\}}\tau_{\{i',j',k'\}}]
- \E_{\cG(n,p,d,r)}[\tau_{\{i,j,k\}}]^2.
\end{equation*}

Since two triangles that do not share a vertex are independent,
\begin{equation*}
V_{\{1,2,3\},\{4,5,6\}}
= \E[\E[\tau_{\{1,2,3\}}\mid\x_1,\x_2,\x_3]
\E[\tau_{\{4,5,6\}}\mid\x_4,\x_5,\x_6]]
- \E[\tau_{\{1,2,3\}}]\E[\tau_{\{4,5,6\}}]
= 0.
\end{equation*}

For a signed triangle, $\E_{\cG(n,p,d,r)}[\tau_{\{1,2,3\}}^2] \le 1$.
Then, we have that
\begin{equation*}
V_{\{1,2,3\},\{1,2,3\}} \le \E_{\cG(n,p,d,r)}[\tau_{\{1,2,3\}}^2] \le 1.
\end{equation*}

Before proceeding to bounding the other two covariances,
we present the following lemma which directly follows from the results of
previous parts.

\begin{lemma} \label{lm:scd_mmt}
Let $\x, \y, \z \sim \cN(\0, \I_d)$ be independent standard normal random
vectors.
Then, there exists a constant $C_\alpha$ such that
\begin{equation*}
\E[\E_{\z}[(\sigma(\inner{\x}{\z})-p)(\sigma(\inner{\y}{\z})-p)]^2]
\le \frac{C_\alpha}{r^4d}.
\end{equation*}
\end{lemma}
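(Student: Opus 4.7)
The plan is to recognize the left-hand side as a second moment of the already-studied quantity $\gamma(\x,\y)$ and conclude via the mean and variance bounds of Lemma~\ref{lm:gamma_results}. Indeed, by the definition in \eqref{eq:gamma_def},
\begin{equation*}
\gamma(\x,\y) = \E_{\z\sim\cN(\0,\I_d)}[(\sigma(\inner{\x}{\z})-p)(\sigma(\inner{\y}{\z})-p)],
\end{equation*}
so the quantity to be bounded is precisely $\E[\gamma(\x,\y)^2]$.

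The key identity is the variance decomposition
\begin{equation*}
\E[\gamma(\x,\y)^2] = \Var[\gamma(\x,\y)] + \E[\gamma(\x,\y)]^2.
\end{equation*}
For the first term, I invoke Lemma~\ref{lm:gamma_Var}, which yields $\Var[\gamma(\x,\y)] \le 68\alpha^2/(r^4 d)$. For the second term, Lemma~\ref{lm:gamma_E} gives $0 \le \E[\gamma(\x,\y)] \le \alpha^2/(r^4 d)$, so $\E[\gamma(\x,\y)]^2 \le \alpha^4/(r^8 d^2)$.

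Since the lemma is stated in the regime where $r \ge 1$ and $d \ge 1$ (matching the standing setup in Theorem~\ref{th:main}), we have $r^8 d^2 \ge r^4 d$, and hence $\alpha^4/(r^8 d^2) \le \alpha^4/(r^4 d)$. Combining the two contributions gives
\begin{equation*}
\E[\gamma(\x,\y)^2] \le \frac{68\alpha^2 + \alpha^4}{r^4 d} =: \frac{C_\alpha}{r^4 d},
\end{equation*}
which is the desired bound. There is no substantive obstacle here: the lemma is essentially a bookkeeping corollary of the mean and variance estimates already derived for $\gamma(\x,\y)$ in Section~\ref{se:imp}, repackaged in the form needed for the variance calculation of the signed triangle statistic in the present section.
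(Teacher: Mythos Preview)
Your proof is correct and essentially identical to the paper's own: both recognize the quantity as $\gamma(\x,\y)$, apply the variance decomposition $\E[\gamma(\x,\y)^2] = \Var[\gamma(\x,\y)] + \E[\gamma(\x,\y)]^2$, invoke Lemmas~\ref{lm:gamma_E} and~\ref{lm:gamma_Var}, and use $r,d\ge 1$ to arrive at the constant $C_\alpha = 68\alpha^2 + \alpha^4$.
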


\begin{proof}
From the definition in \eqref{eq:gamma_def},
\begin{equation*}
\E_{\z}[(\sigma(\inner{\x}{\z})-p)(\sigma(\inner{\y}{\z})-p)] = \gamma(\x,\y).
\end{equation*}
By Lemma~\ref{lm:gamma_E} and Lemma~\ref{lm:gamma_Var},
we have that
\begin{equation*}
\E[\gamma(\x,\y)^2] = \Var[\gamma(\x,\y)] + \E[\gamma(\x,\y)]^2
\le \frac{68\alpha^2}{r^4d} + \frac{\alpha^{4}}{r^{8} d^{2}}
\le \frac{68\alpha^2+\alpha^4}{r^4d},
\end{equation*}
where we used that $r, d \geq 1$.
\end{proof}

\begin{lemma} \label{lm:tau_123_124}
There exists a constant $C_\alpha > 0$ such that
\begin{equation*}
\E[\tau_{\{1,2,3\}}\tau_{\{1,2,4\}}]
\le \frac{C_\alpha}{r^4d}.
\end{equation*}
\end{lemma}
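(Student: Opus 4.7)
The plan is to exploit the fact that the two signed triangles share exactly one edge, namely $\{1,2\}$, while the other four edges $\{2,3\},\{3,1\},\{2,4\},\{4,1\}$ are all distinct. Writing out the product,
\begin{equation*}
\tau_{\{1,2,3\}}\tau_{\{1,2,4\}}
= (a_{12}-p)^{2}(a_{23}-p)(a_{31}-p)(a_{24}-p)(a_{41}-p),
\end{equation*}
I would first condition on the latent vectors $\x_{1},\x_{2},\x_{3},\x_{4}$. Since edges are conditionally independent given the latent positions, the conditional expectation factorizes. The shared-edge factor contributes $\E[(a_{12}-p)^{2}\mid \x_{1},\x_{2}]$, which is bounded uniformly by a constant (at most $1$), while each of the other four factors reduces to its conditional mean $\sigma(\inner{\x_{i}}{\x_{j}})-p$.

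Next I would condition only on $\x_{1},\x_{2}$ and use that $\x_{3}$ and $\x_{4}$ are i.i.d.\ and independent of $(\x_{1},\x_{2})$. Because the two triangles touch $\{1,2\}$ through disjoint ``apex'' vertices, the expectation over $\x_{3}$ and $\x_{4}$ splits into a product of two identical factors, each equal to $\gamma(\x_{1},\x_{2})$ by its definition in~\eqref{eq:gamma_def}:
\begin{equation*}
\E_{\x_{3},\x_{4}}\bigl[(\sigma(\inner{\x_{2}}{\x_{3}})-p)(\sigma(\inner{\x_{3}}{\x_{1}})-p)(\sigma(\inner{\x_{2}}{\x_{4}})-p)(\sigma(\inner{\x_{4}}{\x_{1}})-p)\bigr]
= \gamma(\x_{1},\x_{2})^{2}.
\end{equation*}
Pulling out the bounded shared-edge factor and taking the outer expectation over $\x_{1},\x_{2}$, I arrive at $\E[\tau_{\{1,2,3\}}\tau_{\{1,2,4\}}]\le \E[\gamma(\x_{1},\x_{2})^{2}]$.

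Finally, I would invoke Lemma~\ref{lm:scd_mmt} to conclude $\E[\gamma(\x_{1},\x_{2})^{2}]\le C_{\alpha}/(r^{4}d)$, which is exactly the claimed bound. The whole argument is really just careful bookkeeping; the only step that requires any thought is recognizing that the shared edge splits the computation into one harmless $O(1)$ factor and one ``branching'' factor that is precisely the quantity estimated in the preceding lemma. No new estimates of $f$ or $\sigma$ are needed, so I do not anticipate a genuine obstacle beyond correct conditioning.
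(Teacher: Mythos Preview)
Your proposal is correct and follows essentially the same approach as the paper: bound the shared-edge factor $(a_{12}-p)^{2}$ by $1$, then split over the independent apex vertices $\x_{3},\x_{4}$ to obtain $\E[\gamma(\x_{1},\x_{2})^{2}]$, and apply Lemma~\ref{lm:scd_mmt}. If anything, your conditioning is slightly more careful than the paper's, which writes the first step as an equality with $\brsigma_{1,2}^{2}$ rather than the conditional second moment of $a_{12}-p$; either way the bound by $1$ and the nonnegativity of $\gamma^{2}$ make the argument go through.
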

\begin{proof}
For simplicity of notation, denote
\begin{equation*}
\brsigma_{i,j} \coloneqq \sigma(\inner{\x_i}{\x_j})-p.
\end{equation*}
Then,
\begin{equation*}
\begin{split}
\E[\tau_{\{1,2,3\}}\tau_{\{1,2,4\}}]
&= \E[\brsigma_{1,2}^2\brsigma_{1,3}\brsigma_{2,3}
\brsigma_{1,4}\brsigma_{2,4}]
\le \E[\brsigma_{1,3}\brsigma_{2,3}
\brsigma_{1,4}\brsigma_{2,4}]\\
&= \E[\E_{\x_3}[\brsigma_{1,3}\brsigma_{2,3}]
\E_{\x_4}[\brsigma_{1,4}\brsigma_{2,4}]]
= \E[\E_{\x_3}[\brsigma_{1,3}\brsigma_{2,3}]^2].
\end{split}
\end{equation*}
The claim then follows directly from Lemma~\ref{lm:scd_mmt}.
\end{proof}

By Lemma~\ref{lm:tau_123_124} we thus have that
\begin{equation*}
V_{\{1,2,3\},\{1,2,4\}} \le \E[\tau_{\{1,2,3\}}\tau_{\{1,2,4\}}]
\le \frac{C_\alpha}{r^4d}.
\end{equation*}

\begin{lemma} \label{lm:tau_123_145}
There exists a constant $C_\alpha > 0$ such that
\begin{equation*}
\E[\tau_{\{1,2,3\}}\tau_{\{1,4,5\}}]
\le \frac{C_\alpha}{r^4d}.
\end{equation*}
\end{lemma}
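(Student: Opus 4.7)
The plan is to exploit the fact that the triangles $\{1,2,3\}$ and $\{1,4,5\}$ share exactly one vertex (namely vertex $1$) and no edges, so that after passing to the inner expectation over the Bernoulli noise the product factorizes nicely once we condition on $\x_1$. Concretely, using the conditional independence of the edges given $\x_1,\ldots,\x_5$, the tower property gives
\begin{equation*}
\E[\tau_{\{1,2,3\}}\tau_{\{1,4,5\}}]
= \E[\brsigma_{1,2}\brsigma_{2,3}\brsigma_{1,3}\brsigma_{1,4}\brsigma_{4,5}\brsigma_{1,5}],
\end{equation*}
since all six edges are distinct and $\E[a_{i,j}-p\mid \x_i,\x_j] = \brsigma_{i,j}$. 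The first three factors depend only on $\x_1,\x_2,\x_3$ and the last three only on $\x_1,\x_4,\x_5$; because $(\x_2,\x_3)$ is independent of $(\x_4,\x_5)$, the inner expectation given $\x_1$ factors as $h(\x_1)\cdot h(\x_1)$ where
\begin{equation*}
h(\x_1) \coloneqq \E_{\x_2,\x_3}[\brsigma_{1,2}\brsigma_{2,3}\brsigma_{1,3}\mid\x_1].
\end{equation*}

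Next, I would peel off the $\x_3$ integration using the $\gamma$ notation from \eqref{eq:gamma_def}: conditioning on $\x_1,\x_2$,
\begin{equation*}
\E_{\x_3}[\brsigma_{2,3}\brsigma_{1,3}\mid\x_1,\x_2] = \gamma(\x_1,\x_2),
\end{equation*}
so $h(\x_1) = \E_{\x_2}[\brsigma_{1,2}\gamma(\x_1,\x_2)\mid\x_1]$. An application of the Cauchy--Schwarz inequality, combined with the trivial bound $\brsigma_{1,2}^2 \le 1$, yields
\begin{equation*}
h(\x_1)^2 \le \E_{\x_2}[\brsigma_{1,2}^2\mid\x_1]\cdot\E_{\x_2}[\gamma(\x_1,\x_2)^2\mid\x_1] \le \E_{\x_2}[\gamma(\x_1,\x_2)^2\mid\x_1].
\end{equation*}

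Taking expectation over $\x_1$ and invoking Lemma~\ref{lm:scd_mmt}, which bounds $\E[\gamma(\x,\y)^2]\le C_\alpha/(r^4 d)$, immediately gives
\begin{equation*}
\E[\tau_{\{1,2,3\}}\tau_{\{1,4,5\}}] = \E_{\x_1}[h(\x_1)^2] \le \E[\gamma(\x,\y)^2] \le \frac{C_\alpha}{r^4 d},
\end{equation*}
which is the claimed bound. There is no real obstacle here: the key insight, and the only nontrivial step, is identifying the conditional factorization over $\x_1$ that turns the six-factor expectation into a square of the already-studied cherry-type quantity $h(\x_1)$, after which the second moment bound on $\gamma$ from the impossibility argument in Section~\ref{se:imp} does all the work.
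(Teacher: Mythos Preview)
Your proof is correct and follows essentially the same route as the paper's: factor conditionally on $\x_1$ to write the expectation as $\E_{\x_1}[h(\x_1)^2]$, peel off the $\x_3$ integration to reveal $\gamma(\x_1,\x_2)$, bound $h(\x_1)^2$ by $\E_{\x_2}[\gamma(\x_1,\x_2)^2\mid\x_1]$, and finish with Lemma~\ref{lm:scd_mmt}. The only cosmetic difference is that the paper applies Jensen's inequality $(\E[Z])^2\le\E[Z^2]$ with $Z=\brsigma_{1,2}\gamma(\x_1,\x_2)$ in place of your Cauchy--Schwarz step, arriving at the same bound.
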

\begin{proof}
By the definition of the signed triangle,
\begin{equation*}
\begin{split}
\E[\tau_{\{1,2,3\}}\tau_{\{1,4,5\}}]
&= \E[\brsigma_{1,2}\brsigma_{2,3}\brsigma_{3,1}
\brsigma_{1,4}\brsigma_{4,5}\brsigma_{5,1}]
= \E[\E_{\x_2,\x_3}[\brsigma_{1,2}\brsigma_{2,3}\brsigma_{3,1}]
\E_{\x_4,\x_5}[\brsigma_{1,4}\brsigma_{4,5}\brsigma_{5,1}]]\\
&= \E[\E_{\x_2,\x_3}[\brsigma_{1,2}\brsigma_{2,3}\brsigma_{3,1}]^2],
\end{split}
\end{equation*}
where the last equality holds since $(\x_2,\x_3)$ and $(\x_4,\x_5)$
are identically distributed.

By Jensen's inequality,
\begin{equation*}
\begin{split}
\E_{\x_2,\x_3}[\brsigma_{1,2}\brsigma_{2,3}\brsigma_{3,1}]^2
= \E_{\x_2}[\brsigma_{1,2}\E_{\x_3}[\brsigma_{2,3}\brsigma_{3,1}]]^2
&\le \E_{\x_2}[\brsigma_{1,2}^2\E_{\x_3}[\brsigma_{2,3}\brsigma_{3,1}]^2]
\le \E_{\x_2}[\E_{\x_3}[\brsigma_{2,3}\brsigma_{3,1}]^2].
\end{split}
\end{equation*}

Therefore, by Lemma~\ref{lm:scd_mmt},
\begin{equation*}
\E[\tau_{\{1,2,3\}}\tau_{\{1,4,5\}}]
\le \E[\E_{\x_3}[\brsigma_{2,3}\brsigma_{3,1}]^2]
\le \frac{C_\alpha}{r^4d}. \qedhere
\end{equation*}
\end{proof}

By Lemma~\ref{lm:tau_123_145} we thus have that
\begin{equation*}
V_{\{1,2,3\},\{1,4,5\}} \le \E[\tau_{\{1,2,3\}}\tau_{\{1,4,5\}}]
\le \frac{C_\alpha}{r^4d}.
\end{equation*}

Putting these bounds together, we have the following lemma.
\begin{lemma} \label{lm:Var_tau}
There exists a constant $C_\alpha > 0$ such that
\begin{equation*}
\Var[\tau(\cG(n,p,d,r))]
\le n^3 + \frac{C_\alpha n^5}{r^4d}.
\end{equation*}
\end{lemma}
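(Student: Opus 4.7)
The plan is to combine the variance decomposition with the three covariance bounds established immediately above. The decomposition
\begin{equation*}
\Var[\tau(\cG(n,p,d,r))]
= \binom{n}{3} V_{\{1,2,3\},\{1,2,3\}}
+ \binom{n}{4}\binom{4}{2} V_{\{1,2,3\},\{1,2,4\}}
+ \binom{n}{5}\binom{5}{1}\binom{4}{2} V_{\{1,2,3\},\{1,4,5\}}
+ \binom{n}{6}\binom{6}{3} V_{\{1,2,3\},\{4,5,6\}}
\end{equation*}
is obtained by grouping unordered pairs of triangles according to the size of their vertex intersection (three, two, one, or zero shared vertices); the multinomial coefficients count the number of ordered pairs of triples in each class.

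First I would dispose of the two easy extremes. The disjoint case $V_{\{1,2,3\},\{4,5,6\}} = 0$ was already observed from the independence of $\tau_{\{1,2,3\}}$ and $\tau_{\{4,5,6\}}$ conditioned on the latent vectors, together with $\E[\tau_{\{1,2,3\}}]=\E[\tau_{\{4,5,6\}}]$. For the fully coincident case, I would use the crude bound $V_{\{1,2,3\},\{1,2,3\}} \leq \E[\tau_{\{1,2,3\}}^2] \leq 1$, which holds because each signed edge indicator $a_{i,j}-p$ lies in $[-1,1]$. Together these contribute at most $\binom{n}{3} \le n^3$ to the variance.

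Next I would plug in the two nontrivial covariance bounds. From Lemma~\ref{lm:tau_123_124} (triangles sharing an edge) and Lemma~\ref{lm:tau_123_145} (triangles sharing a single vertex), both covariances are at most $C_\alpha/(r^4 d)$ after dropping the subtracted $\E[\tau]^2$ term (which is nonnegative). The corresponding contributions to the variance are therefore bounded by
\begin{equation*}
6\binom{n}{4}\cdot \frac{C_\alpha}{r^4 d} + 30\binom{n}{5}\cdot \frac{C_\alpha}{r^4 d}
\le \frac{C'_\alpha\, n^5}{r^4 d},
\end{equation*}
where the $n^4$ term is absorbed into the $n^5$ term (possibly after adjusting the constant and using $n \geq 1$).

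Combining the four terms yields the stated bound. There is no real obstacle here: the heavy lifting has already been done in the covariance estimates (Lemmas~\ref{lm:tau_123_124} and~\ref{lm:tau_123_145}, via Lemma~\ref{lm:scd_mmt}), and the remaining step is simply a union-bound style summation with the binomial counts.
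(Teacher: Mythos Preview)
Your proposal is correct and follows exactly the same approach as the paper: the paper also just assembles the variance decomposition by overlap size, uses $V_{\{1,2,3\},\{4,5,6\}}=0$, $V_{\{1,2,3\},\{1,2,3\}}\le 1$, and the two $C_\alpha/(r^4 d)$ covariance bounds from Lemmas~\ref{lm:tau_123_124} and~\ref{lm:tau_123_145}, then absorbs the $n^4$ term into the $n^5$ term.
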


\subsection{Concluding the proof}
Combining Lemma~\ref{lm:tau_large_r}, Lemma~\ref{lm:tau_large_d},
and Lemma~\ref{lm:Var_tau}, we have for $d, r \ge C_\alpha$ that
when $d/\log^2 d \gg r^{6}$ or $r/\log^2 r \gg d^{1/6}$,
\begin{equation*}
\abs{\E[\tau(\cG(n,p,d,r))] - \E[\tau(\cG(n,p))]}
\ge \frac{C_p n^3}{r^3\sqrt{d}}
\end{equation*}
and
\begin{equation*}
\max\{\Var[\tau(\cG(n,p,d,r)],\Var[\tau(\cG(n,p))]\}
\le n^3 + \frac{C_\alpha n^5}{r^4d}.
\end{equation*}
Therefore, by Chebyshev's inequality,
\begin{equation*}
\tv{\cG(n,p,d,r)}{\cG(n,p)}
\ge 1 - \biggl(\frac{C_{p}'r^6d}{n^3}+\frac{C_{p,\alpha}'r^2}{n}\biggr)
\end{equation*}
for some constants $C_{p}', C_{p,\alpha}' > 0$.
Notice that $r^6d/n^3 \to 0$ implies $r^2/n \to 0$.
Theorem~\ref{th:pos} is hence proved.

\section*{Acknowledgements}
We thank Ramon van Handel for insightful comments on several results
and Jiacheng Zhang for suggesting the proofs of Lemma~\ref{lm:der_bounded}
and Lemma~\ref{lm:lambda_bounded}.
The authors acknowledge the generous support from NSF grant DMS-1811724.

\bibliographystyle{plainnat}

\end{document}